\newtheorem{theorem}{Theorem}[section]
\newtheorem{lemma}[theorem]{Lemma}
\newtheorem{corollary}[theorem]{Corollary}
\numberwithin{equation}{section}
\theoremstyle{remark}
\newtheorem*{remark}{Remark}
\lstdefinestyle{CStyle}{
    basicstyle=\footnotesize,
    breakatwhitespace=false,         
    breaklines=true,                 
    captionpos=b,                    
    keepspaces=true,                 
    numbers=left,                    
    numbersep=5pt,                  
    showspaces=false,                
    showstringspaces=false,
    showtabs=false,                  
    tabsize=2,
    language=C
}
\title[Explicit bounds for $\zeta$ and a new zero free region]{Explicit bounds for the Riemann zeta function \\and a new zero free region}
\author[C. Bellotti]{Chiara  Bellotti}
\address{School of Science\\
The University of New South Wales, Canberra, Australia}
\email{c.bellotti@adfa.edu.au}
\date\today
\keywords{Riemann zeta function, explicit bounds, zero-free region, Vinogradov integral}
\subjclass[2020]{Primary 11M06, 11N05, 11L15; Secondary 11D72,
11M35.}
\begin{document}

\begin{abstract}
We prove that $|\zeta(\sigma+it)|\le 70.7 |t|^{4.438
    (1-\sigma)^{3/2}}\log^{2/3}|t|$ for $1/2\le\sigma\le 1$ and $|t|\ge 3$. As a consequence, we improve the explicit zero-free region for $\zeta(s)$, showing that $\zeta(\sigma+it)$ has no zeros in the region $\sigma \geq 1-1 /\left(54.004(\log |t|)^{2 / 3}(\log \log |t|)^{1 / 3}\right)$ for $|t| \geq 3$ and asymptotically in the region $\sigma \geq 1-1 /\left(48.0718(\log |t|)^{2 / 3}(\log \log |t|)^{1 / 3}\right)$ for $|t|$ sufficiently large.
\end{abstract}
\maketitle
\section{Introduction}
Let $\zeta(s)$ be the Riemann zeta function, with $s=\sigma+it$ a complex variable. It is known that all the non trivial zeros of $\zeta(s)$ have real part $\sigma\in (0,1)$. Detecting zero free regions for $\zeta(s)$ inside the critical strip $0<\sigma<1$ is an open problem that has always caught much interest in analytic number theory. Great effort has been put in trying to find both asymptotically and explicit regions inside the critical strip where there are no zeros of $\zeta(s)$. The classical zero-free region is of the form $\sigma>1-1/(R_0\log |t|)$, where $R_0$ is a positive constant. The best known result of this form is due to Mossinghoff, Trudgian and Yang \cite{mossinghoff_explicit_2022} with $R_0=5.558691$ for every $|t|\ge 2$ (see \cite{stechkin_zeros_1970, rosser_sharper_1975,kondratev_some_1977,kadiri_region_2005,jang_note_2014,mossinghoff_nonnegative_2014} for previous results). Littlewood zero-free region \cite{littlewood_researches_1922} is instead of the form of $\sigma>1-\log\log |t|/C_1\log|t|$, where $C_1$ is a positive constant. It has been made first explicit by Yang \cite{yang2023explicit}, who found $C_1=21.432$ for $|t|\ge 3$.\\Asymptotically larger zero-free regions for the Riemann zeta function $\zeta(s)$, known as Korobov--Vinogradov zero-free regions,  are of the form \begin{equation}\label{kvzerofree}
    \sigma>1-\frac{1}{C_2(\log |t|)^{2/3}(\log\log|t|)^{1/3}},
\end{equation}
where $C_2$ is a positive constant and are due to the method of Korobov \cite{korobov_abschatzungen_1958} and Vinogradov \cite{vinogradov_eine_1958}, in which the main tool is an upper bound on $|\zeta(\sigma+it)|$ when $\sigma$ is near to the line $\sigma=1$. Upper bounds of this form were first made explicit by Richert \cite{richert_zur_1967}, who used Korobov-Vinogradov method to prove that \begin{equation}\label{upperbound}
|\zeta(\sigma+i t)| \leq A|t|^{B(1-\sigma)^{3 / 2}} \log ^{2 / 3}|t| \qquad|t| \geq 2,\  \frac{1}{2} \leq \sigma \leq 1
\end{equation}
with $B=100$ and $A$ a certain absolute constant. Although smaller values for $B$ were already found (see \cite{kulas_refinement_1999}), the first completely explicit bound of the form \eqref{upperbound} is due to Cheng \cite{cheng99}, with $A=175$ and $B=46$. This estimate was further improved in 2002 by Ford \cite{ford_vinogradovs_2002} to $A=76.2$ and $B=4.45$ for every $|t|\ge 3$ and $1/2\le \sigma \le 1$.\\
Explicit bounds of the form \eqref{upperbound} play a fundamental role in detecting explicit Korobov--Vinogradov zero-free regions for $\zeta(s)$. There are several results regarding the value of the constant $C_2$ in \eqref{kvzerofree} and the current best known estimate is due to Mossinghoff, Trudgian and Yang \cite{mossinghoff_explicit_2022}, with $C_2=55.241$ for every $|t|\ge 3$ (see \cite{cheng_explicit_2000,ford_zero_2002,ford_zero_2022} for previous results).\\
In this paper, we will improve the values for both the constants $A,B$ in \eqref{upperbound} and, as a consequence, we find an improved Korobov-Vinogradov zero-free region for $\zeta(s)$. More precisely, denoting with $\zeta(s, u)$ the Hurwitz zeta function $\zeta(s, u)=\sum_{n=0}^{\infty}(n+u)^{-s}$ defined for every $\Re s>1$ and $0<u\le 1$, we will prove the following result for both $\zeta(s)=\zeta(s,1)$ and a generic Hurwitz function $\zeta(s,u)$.
\begin{theorem}\label{theorem2}
    The following estimate holds for every $|t|\ge 3$ and $\frac{1}{2}\le \sigma\le 1$:
\begin{equation}
    \begin{aligned}
    |\zeta(\sigma+it)|&\le A |t|^{B
    (1-\sigma)^{3/2}}\log^{2/3}|t|\\
        \left|\zeta(\sigma+i t, u)-u^{-s}\right| &\leq A |t|^{B(1-\sigma)^{3 / 2}} \log ^{2 / 3} |t|, \qquad 0<u \leq 1,
    \end{aligned}
\end{equation}with $A=70.6995$ and $B=4.43795$.
\end{theorem}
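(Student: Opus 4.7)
The plan is to follow the Korobov--Vinogradov circle of ideas in the spirit of Ford's 2002 argument, but tightening each quantitative step so that the constants $A$ and $B$ are reduced simultaneously. The first move is to replace $\zeta(\sigma+it)$ and $\zeta(\sigma+it,u)-u^{-s}$ by truncated Dirichlet-type sums of length roughly $N \asymp |t|^{\alpha}$ for some small $\alpha$, using either Euler--Maclaurin or an approximate functional equation. In the Hurwitz case the truncation produces $\sum_{1 \le n < N}(n+u)^{-s}$, and the tail estimate uses only elementary calculus, so it can be handled with the same apparatus in both statements of the theorem. The tail/remainder constants have to be propagated carefully, since they enter $A$ additively.

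The core of the argument is then the estimation of each dyadic block
\[
S_M \;=\; \sum_{M \le n < 2M} (n+u)^{-\sigma} e^{-it\log(n+u)}, \qquad M \le N,
\]
for $u \in \{0,1\}\cup (0,1]$. Here I would use the $k$-th derivative test via the classical reduction of $|S_M|^{2^{k-1}}$ to the Vinogradov integral $J_{s,k}(P)$, evaluated at parameters matched to the derivatives of $t\log(n+u)$. Inserting an explicit version of the now-sharp Vinogradov mean value bound (Wooley, and Bourgain--Demeter--Guth) and applying Abel summation to strip off the weight $(n+u)^{-\sigma}$ gives $|S_M|\le M^{1-\sigma}\, \Phi(M,t,k)$ for an explicit $\Phi$. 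Summing over the $O(\log|t|)$ dyadic blocks and optimising the free integer $k$ as a function of $\log|t|$ and $1-\sigma$, one arrives at a bound of the form $|t|^{B(1-\sigma)^{3/2}}\log^{2/3}|t|$ with an explicit multiplicative constant that will become $A$. Because the shift $u \in (0,1]$ only affects the very first derivative of the phase by $O(1)$ and does not change the higher derivatives of $\log(n+u)$ in an essential way, the same analysis yields both bounds claimed in the theorem.

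The main obstacle is the quantitative bookkeeping rather than any conceptual novelty. To push Ford's $A=76.2$, $B=4.45$ down to $A=70.6995$, $B=4.43795$, every constant that appears in the explicit Vinogradov integral bound, in the $k$-th derivative reduction, in the Abel summation weight, and in the truncation error must be controlled sharply and then fed into a joint optimisation over at least four parameters: the truncation length $N$, the dyadic scale $M$, the derivative order $k$, and any smoothing scale used to replace $(n+u)^{-\sigma}$ by a $C^\infty$ weight. A clean explicit Vinogradov-type inequality and a careful convex minimisation of the resulting expression in $k$ are what ultimately produce the numerical values of $A$ and $B$; any slack in one estimate inflates both constants, so the delicate part of the proof is precisely the simultaneous tightening of all intermediate inequalities rather than a single new ingredient.
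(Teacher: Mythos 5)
Your overall architecture --- truncate $\zeta(s,u)$ to a Dirichlet-type sum, estimate dyadic blocks by reducing to the Vinogradov integral, and optimise the degree $k$ against $\log|t|$ and $1-\sigma$ --- is indeed the Korobov--Vinogradov skeleton that the paper follows (the paper packages the dyadic summation and the optimisation in Ford's Lemma~7.3, which converts a uniform bound $S(N,t)\le C N^{1-1/(D\lambda^2)}$ into the stated inequality with $B=\tfrac{2}{9}\sqrt{3D}$ and an explicit $A$). But there is a concrete step in your plan that fails: you propose to insert ``the now-sharp Vinogradov mean value bound (Wooley, and Bourgain--Demeter--Guth).'' Those results are sharp only in the exponent of $P$; the explicit constants currently available from efficient congruencing or decoupling (e.g.\ Steiner's explicit version, $C\approx k^{k^{O(\log k/\log\lambda)}}$) are doubly exponential in $k$, whereas the argument requires the constant to be raised to the power $1/(2rs)$ with $r,s\asymp k^2$, so anything larger than $k^{O(k^4)}$ destroys the final numerics. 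This is exactly why the paper does \emph{not} use the modern mean value theorem and instead re-optimises the classical $p$-adic iteration (Ford's Lemmas~3.2$'$--3.5, Tyrina's bounds for $s\ll k^2$, Preobrazhenski\u{\i}'s refinement for large $k$) to obtain Theorem~\ref{newboundsvino}, whose constant is of the controllable form $k^{\theta k^3}$.

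The second gap is that your proposal never produces the numbers. The values $A=70.6995$ and $B=4.43795$ do not come from a generic ``joint optimisation over four parameters'': $B$ is forced by $B=\tfrac{2}{9}\sqrt{3D}$ with $D=132.94357$ from the uniform bound $S(N,t)\le 8.7979\,N^{1-1/(132.94357\lambda^2)}$, and that bound in turn requires (i) the improved explicit $J_{s,k}$ estimates just mentioned, (ii) Ford's bound on incomplete systems $J_{s,g,h}(\mathscr{C}(P,R))$ over smooth numbers, which your dyadic-block scheme omits entirely, and (iii) a case split around the critical value $\lambda=\log t/\log N\approx 84$ (shifted down from Ford's $87$), with the small-$\lambda$ range handled by different techniques. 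Finally, the constant $A$ also absorbs separate case analyses for $\sigma\le 15/16$ and for $3\le t\le 10^{108}$, where the paper uses Ford's Lemmas~7.1--7.2 rather than the main mechanism. Without an explicit Vinogradov bound with constants of the right size and without the $S(N,t)\to\zeta$ transfer lemma, your outline cannot be completed to yield the stated constants.
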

The bound on $\zeta(s,u)$ found in Theorem \ref{theorem2} might be useful to bound Dirichlet $L$-functions due to the relation $L(s, \chi)=q^{-s} \sum_{m=1}^q \chi(m) \zeta(s, m / q)$, where $\chi$ is a Dirichlet character modulo $q$.\\
Although the new values $A=70.6995$ and $B=4.43795$ found in Theorem \ref{theorem2} are modest improvements on those found by Ford in \cite{ford_vinogradovs_2002} ($A=76.2$ and $B=4.45$ respectively), their importance relies on the fact that just a small improvement for both $A$ and $B$ can lead to improvements in several other results in analytic number theory. In particular, they have many applications in finding improved estimates for Korobov--Vinogrado zero-free region and in estimating the error term in the prime number theorem, both in an effective and ineffective way.\\
Furthermore, improvements on $A$ and $B$ only of the size found in Theorem \ref{theorem2} were already expected. In \cite{ford_vinogradovs_2002} (Section 8, point $1$), Ford already predicted that an optimization of the argument involving the Vinogradov integral would have lower $B$ by less than $0.02$, which is consistent with our new value for $B=4.43795$ found in Theorem \ref{theorem2}.\\

As already mentioned, an immediate consequence of Theorem \ref{theorem2} is a new explicit zero free region for the Riemann zeta function.
\begin{theorem}\label{theorem6}
    There are no zeros of $\zeta(\sigma+i t)$ for $|t| \geq 3$ and
$$
\sigma \geq 1-\frac{1}{54.004(\log |t|)^{2 / 3}(\log \log |t|)^{1 / 3}}.
$$
\end{theorem}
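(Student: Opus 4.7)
The plan is to apply the standard Hadamard--de la Vallée-Poussin trigonometric method, using as input the upper bound in Theorem \ref{theorem2}; this is the same template used by Ford and later by Mossinghoff, Trudgian and Yang to derive explicit Korobov--Vinogradov zero-free regions. Suppose, for contradiction, that $\rho_0 = \beta_0 + i\gamma_0$ is a zero of $\zeta$ with $\gamma_0 \geq 3$ and $1 - \beta_0 < 1/(54.004 \,(\log \gamma_0)^{2/3}(\log\log\gamma_0)^{1/3})$. I would fix a real $\sigma = 1 + \eta(1-\beta_0)$ with a parameter $\eta > 0$ to be optimized, and a non-negative trigonometric polynomial $P(\theta) = \sum_{k=0}^{K} a_k \cos(k\theta) \geq 0$ with $a_0, a_1 > 0$ and $a_k \geq 0$ --- most likely the same degree-$4$ polynomial tabulated by Mossinghoff--Trudgian--Yang, since for their choice of polynomial the constant $C_2$ depends monotonically on the value of $B$ in \eqref{upperbound}.

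The key chain begins with the positivity of the Dirichlet series for $-\zeta'/\zeta$: for $\sigma > 1$,
\[
\sum_{k=0}^{K} a_k \,\Re\!\left(-\frac{\zeta'}{\zeta}(\sigma + ik\gamma_0)\right) \geq 0.
\]
I would bound each term separately. The $k=0$ term contributes at most $a_0/(\sigma-1)$ plus an absolute constant from the Laurent expansion at $s=1$. For $k=1$ I would isolate the pole of $\zeta'/\zeta$ at $\rho_0$, which contributes $-a_1/(\sigma-\beta_0)$; the remaining ``other zeros'' contribution is controlled by combining the partial-fraction expansion of $\zeta'/\zeta$ with a Jensen-type estimate on the number of zeros in a short disk around $1+i\gamma_0$, itself derived from the upper bound in Theorem \ref{theorem2}. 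For $k\geq 2$, I would apply the Borel--Carath\'eodory lemma to $\log\zeta$ on two nested disks centred at $2+ik\gamma_0$, using Theorem \ref{theorem2} on the outer circle to obtain $\Re\log\zeta \le B(1-\sigma')^{3/2}\log(k\gamma_0)+\tfrac{2}{3}\log\log(k\gamma_0)+\log A$; this converts to an explicit upper bound on $-\Re(\zeta'/\zeta)(\sigma+ik\gamma_0)$ of the same shape.

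Combining these estimates yields an inequality of the form
\[
\frac{a_1}{\sigma - \beta_0} \le \frac{a_0}{\sigma - 1} + \Phi(\sigma, \gamma_0),
\]
where $\Phi$ is an explicit function of $A$, $B$, the polynomial coefficients, and the chosen Borel--Carath\'eodory radii. Rearranging gives a lower bound on $1-\beta_0$. Balancing the $1/(\sigma-1)$ term against the $(1-\sigma)^{3/2}\log\gamma_0$ contribution forces $\sigma - 1$ to have order $(\log\gamma_0)^{-2/3}(\log\log\gamma_0)^{1/3}$, and at the optimum the resulting constant takes the form $C_2 \approx \kappa \cdot B^{2/3}$ with $\kappa$ a functional of the trigonometric polynomial. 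With $B=4.43795$ from Theorem \ref{theorem2}, this optimization should yield $C_2 = 54.004$ in the uniform regime and the asymptotic value $48.0718$ once the secondary terms are dropped.

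The main obstacle is purely computational and bookkeeping-heavy. Every constant must be tracked through the Borel--Carath\'eodory step (inner/outer radii chosen separately for $k=1$ and $k\geq 2$), through the partial-fraction representation (handling trivial zeros, the pole at $s=1$, and the zeros with ordinate far from $\gamma_0$), and through the final joint optimization over $\eta$ and the polynomial coefficients. A secondary technical issue is ensuring the bound is \emph{uniform} for $|t|\ge 3$ rather than only asymptotic: for moderate $|t|$ the Korobov--Vinogradov region is weaker than the classical one, and one has to invoke the existing explicit classical region of Mossinghoff--Trudgian--Yang (with $R_0 = 5.558691$) to cover that range. Aside from this explicit bookkeeping, the argument is robust: the only genuinely new ingredient relative to \cite{mossinghoff_explicit_2022} is the sharper input constant $B = 4.43795$ supplied by Theorem \ref{theorem2}.
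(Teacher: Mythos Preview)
Your overall template is exactly the one the paper uses: feed the bound of Theorem~\ref{theorem2} into the Mossinghoff--Trudgian--Yang machinery (trigonometric polynomial, Borel--Carath\'eodory, partial fractions), then patch the small-$|t|$ range with a previously known explicit region. Two concrete details in your proposal, however, would prevent you from reaching the constant $54.004$.

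First, the polynomial. The paper (following \cite{mossinghoff_explicit_2022}) uses a degree-$40$ non-negative polynomial $P_{40}$ with $b_0=1$, $b_1=1.74600\ldots$, $b=\sum_{k\ge1}b_k=3.56453\ldots$, not a degree-$4$ one. The constant $C_2$ depends sensitively on the ratio $b_1/b_0$ and on $b$, and the high-degree polynomial is what buys the last few percent; a degree-$4$ polynomial would give a visibly worse constant.

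Second, and more substantively, the small-$|t|$ patch. The paper does \emph{not} use the classical region $\sigma>1-1/(5.558691\log|t|)$ here; it uses Yang's explicit Littlewood region $\sigma>1-\log\log|t|/(21.432\log|t|)$, valid for $|t|\ge3$, and takes the crossover point $T_0=\exp(463388)$. The reason is structural: the explicit Korobov--Vinogradov inequality coming out of the Borel--Carath\'eodory step carries secondary terms (from $\log A$, from the $\tfrac23\log\log t$ factor, and from the zero-counting estimate) that only become small enough to yield $54.004$ once $\log t$ is of order $10^5$--$10^6$. The classical region, by contrast, is dominated by the Korobov--Vinogradov curve already around $\log|t|\approx 8\,300$ (solve $54.004(\log t)^{2/3}(\log\log t)^{1/3}=5.558691\log t$), so it cannot cover the range $8\,300\lesssim\log|t|\le 463388$. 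If you were forced to start the Korobov--Vinogradov argument at $T_0\approx\exp(8300)$, the secondary terms would inflate the constant well beyond $54.004$. The Littlewood region is precisely what bridges this gap, because its width has an extra $\log\log|t|$ in the numerator and therefore tracks the Korobov--Vinogradov curve much longer.

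So: right skeleton, but swap in the degree-$40$ polynomial and replace the classical-region patch by Yang's Littlewood region, with the parameters $T_0=\exp(463388)$, $M_1=0.050007$, $E=1.8008278$, $R=468$ as in the paper.
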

Theorem \ref{theorem2} has also an influence on asymptotically Korobov-Vinogradov zero-free regions of the form\begin{equation}\label{askv}
\sigma \geq 1-\frac{1}{c(\log |t|)^{2 / 3}(\log \log |t|)^{1 / 3}},
\end{equation}where $c$ is a positive constant for $|t|$ sufficiently large. More precisely, we get $c=48.0718$, improving $c$ on the current best value of $48.1588$ due to Mossinghoff, Trudgian and Yang \cite{mossinghoff_explicit_2022} (see \cite{ford_zero_2002} for previous results). 
\begin{theorem}\label{theoremasimp}
    For sufficiently large $|t|,$ there are no zeros of $\zeta(\sigma+it)$ with
    \begin{equation}\label{asimpzf}
\sigma \geq 1-\frac{1}{48.0718(\log |t|)^{2 / 3}(\log \log |t|)^{1 / 3}}.
\end{equation}
\end{theorem}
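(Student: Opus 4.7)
The plan is to follow the classical Korobov--Vinogradov strategy, feeding the bound of Theorem~\ref{theorem2} into the standard zero-detection machinery and then retaining only leading-order terms as $|t|\to\infty$ to minimize the resulting constant. The first step is to fix a nonnegative trigonometric polynomial $P(\theta)=\sum_{k=0}^{K}b_k\cos(k\theta)\ge 0$ with $b_k\ge 0$ and $b_1>b_0$; in the asymptotic regime one uses the optimized polynomials of Mossinghoff--Trudgian, since the ratios $b_0/b_1$ and $(\sum_{k\ge 1}b_k)/b_1$ are what eventually drive the constant. Through the Dirichlet series identity $-\Re(\zeta'/\zeta)(s)=\sum_n\Lambda(n)n^{-\sigma}\cos(t\log n)$, the nonnegativity of $P$ yields
\begin{equation*}
-\sum_{k=0}^K b_k\,\Re\frac{\zeta'}{\zeta}(\sigma+ik\gamma)\ge 0
\end{equation*}
for every $\sigma>1$ and $\gamma\in\mathbb{R}$.

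Next, suppose $\rho=\beta+i\gamma$ is a zero of $\zeta$ with $|\gamma|$ large, and set $L=\log|\gamma|$ and $\mathcal L=L^{2/3}(\log L)^{1/3}$. Each summand above is estimated by the usual Hadamard--de~la~Vall\'ee-Poussin method. The $k=0$ term contributes at most $1/(\sigma-1)+O(1)$ coming from the pole at $s=1$. The $k=1$ term is bounded above by $-1/(\sigma-\beta)$ plus a remainder that, by Borel--Carath\'eodory applied on a disk of radius $r$ centered at $1+i\gamma$, is controlled by the maximum of $\log|\zeta|$ on that disk. Theorem~\ref{theorem2} gives
\begin{equation*}
\log|\zeta(\sigma'+it')|\le B(1-\sigma')^{3/2}L+\tfrac{2}{3}\log L+\log A
\end{equation*}
uniformly on the circle, so the remainder enters the final inequality with coefficient proportional to $B$. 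The terms for $k\ge 2$ are bounded similarly, but lack the $-1/(\sigma-\beta)$ gain from a zero.

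Setting $\sigma-1=\alpha_1/\mathcal L$ and the Borel--Carath\'eodory radius $r=\alpha_2/\mathcal L$ for free parameters $\alpha_1,\alpha_2>0$ to be optimized, and combining these estimates with the trigonometric polynomial inequality while discarding all terms of order $o(\mathcal L^{-1})$, one arrives at an inequality of the shape
\begin{equation*}
\frac{b_0}{\alpha_1}+R(B,\alpha_1,\alpha_2)\sum_{k\ge 1}b_k\;\ge\;\frac{b_1\,\mathcal L}{\alpha_1+(1-\beta)\mathcal L}+o(1),
\end{equation*}
which rearranges to a lower bound $(1-\beta)\mathcal L\ge 1/c(\alpha_1,\alpha_2,B,\{b_k\})+o(1)$. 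Numerically minimizing $c$ jointly over $\alpha_1$, $\alpha_2$ and the choice of trigonometric polynomial, with $B=4.43795$ supplied by Theorem~\ref{theorem2}, yields the target value $c=48.0718$, which proves \eqref{asimpzf}.

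The conceptual steps are entirely standard; the main obstacle is therefore arithmetic rather than mathematical, namely executing the joint optimization carefully and verifying that every term discarded is genuinely $o(\mathcal L^{-1})$, so that the asymptotic constant is honest. Since the polynomial and the parameters $\alpha_1,\alpha_2$ enter in essentially the same way as in \cite{mossinghoff_explicit_2022}, the improvement from the previous value $48.1588$ to $48.0718$ is driven almost entirely by the reduction from $B=4.45$ (Ford) to the value $B=4.43795$ established in Theorem~\ref{theorem2}.
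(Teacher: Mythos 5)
Your proposal is correct and follows essentially the same route as the paper, which simply invokes the asymptotic argument of \cite{mossinghoff_explicit_2022} verbatim (with its optimized degree-46 nonnegative trigonometric polynomial) and substitutes the new exponent $B=4.43795$ from Theorem \ref{theorem2}; your observation that the gain over $48.1588$ comes entirely from the reduction in $B$ is exactly the point. The only detail worth noting is that the paper fixes the specific polynomial $P_{46}$ with $b_0=1$, $b_1=1.74708744081848$, $\sum_{k\ge 1}b_k=3.57440943022073$ rather than re-optimizing the polynomial jointly with the parameters.
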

As in \cite{mossinghoff_explicit_2022}, the proofs of Theorem \ref{theorem6} and Theorem \ref{theoremasimp} rely on some non-negative trigonometric polynomials. We recall that, for every $K\ge 2$, a $K$-th degree non-negative trigonometric polynomial $P_K(x)$ is defined as  \begin{equation}\label{deftrpol}
    P_K(x)=\sum_{k=0}^K b_k \cos (k x)
\end{equation}
where $b_k$ are constants such that $b_k \geq 0, b_1>b_0$ and $P_K(x) \geq 0$ for all real $x$.\\
Finally, one can use Theorem \ref{theoremasimp} to improve on the error term in the prime number theorem. As per Ford \cite{ford_vinogradovs_2002}, we get the following estimate for the error term in the prime number theorem
$$
\pi(x)-\operatorname{li}(x) \ll x \exp \left\{-d(\log x)^{3 / 5}(\log \log x)^{-1 / 5}\right\},
$$with $$
d=\left(\frac{5^6}{2^2 \cdot 3^4 \cdot c^3}\right)^{1 / 5},
$$
where $c$ is the constant in \eqref{askv}. Using the new value for $c$ found in Theorem \ref{theoremasimp} we obtain $d=0.212579$, which is a slightly improvement on $d=0.2123$ found by Mossinghoff, Trudgian and Yang \cite{mossinghoff_explicit_2022}.\\

As in \cite{ford_vinogradovs_2002}, Theorem \ref{theorem2} follows from a uniform upper bound on the sum \begin{equation}\label{Snt}
  S(N, t) := \max_{0 < u \le 1}\max_{N < R \le 2N}\left|\sum_{N \le n \le R}\frac{1}{(n + u)^{it}}\right|,
\end{equation}
where $N$ is a positive integer and $t\ge N$. This upper bound is of the form 
\[S(N,t)\le c N^{1-1/(u\lambda^2)},
\]with $u,c>0$ and $\lambda=\log t/\log N.$ The strength of Ford's argument in \cite{ford_vinogradovs_2002} relies on some explicit estimates for both the Vinogradov integral and a quantity that counts the number of solutions of incomplete Diophantine systems that, combined with estimates for the exponential sum $S(N,t)$, give a completely explicit uniform upper bound on $S(N,t)$. Explicit bounds for the Vinogradov integral were further improved by Preobrazhenski\u{\i} \cite{preobrazhenskii_new_2011} in 2011 and Steiner in 2019 \cite{steiner_new_2019} (see \cite{hua_improvement_1949, stechkin_zeros_1970, tyr87, arkhipov2004trigonometric} for previous results).\\ We recall that the Vinogradov integral is defined as\begin{equation}\label{vinogradovintegral}
J_{s, k}(P)=\int_{[0,1]^k}\left|\sum_{1 \leq x \leq P} e\left(\alpha_1 x+\cdots+\alpha_k x^k\right)\right|^{2 s} d \boldsymbol{\alpha}
\end{equation}where $\boldsymbol{\alpha}=\left(\alpha_1, \ldots, \alpha_k\right)$ and $e(z)=e^{2 \pi i z}$, or equivalently, $J_{s, k}(P)$ is defined as the number of solutions of the simultaneous equations \begin{equation*}
\sum_{i=1}^s\left(x_i^j-y_i^j\right)=0 \quad(1 \leq j \leq k) ; \quad 1 \leq x_i, y_i \leq P.
\end{equation*}Regarding incomplete systems, we denote with $J_{s, k, h}(\mathscr{B})$ the number of solutions of the system \begin{equation*}
\sum_{i=1}^s\left(x_i^j-y_i^j\right)=0 \quad(h \leq j \leq k) ; \quad x_i, y_i \in \mathscr{B},
\end{equation*}where $\mathscr{B}$ is a suitable set. For our purpose, we will use the definition given by Ford in \cite{ford_vinogradovs_2002} $\mathscr{B}=\mathscr{C}(P,R)$ where $\mathscr{C}(P,R)$ is the set of integers $\le P$ composed only of prime factors in $(\sqrt{R}, R]$. Hence, $J_{s, k, h}(\mathscr{C}(P, R))$ is defined as\begin{equation}\label{defincomplete}
J_{s, k, h}(\mathscr{C}(P, R))=\int_{[0,1]^t}|f(\boldsymbol{\alpha})|^{2 s} d \boldsymbol{\alpha},
\end{equation}where$$
f(\boldsymbol{\alpha})=f(\boldsymbol{\alpha} ; P, R)=\sum_{x \in \mathscr{C}(P, R)} e\left(\alpha_h x^h+\cdots+\alpha_k x^k\right)
$$and $\boldsymbol{\alpha}=\left(\alpha_h, \cdots, \alpha_k\right)$.\\
In his paper \cite{ford_vinogradovs_2002}, Ford estimated $S(N,t)$ working separately and with different techniques on three different ranges of $\lambda$. More precisely, he considered the ranges $\lambda\le 87$, $87\le \lambda\le 220$ and $\lambda\ge 220$, where the critical case happens to be around $\lambda=87$. However, it is possible to shift the critical case to $\lambda=84$, with a consequent improved upper bound on $S(N,t)$ and so improved values of $A$ and $B$. Indeed, in his paper \cite{ford_vinogradovs_2002} (p. 590), Ford stated a variant of his argument to find even better estimates for the Vinogradov integral when $k$ is small. These results, combined with Preobrazhenski\u{\i}'s argument for $k\ge 90000$ in \cite{preobrazhenskii_new_2011} and some explicit bounds for the Vinogradov integral when $s\ll k^2$ due to Tyrina \cite{tyr87}, will give the following explicit bounds for $J_{s,k}$.
\begin{theorem}\label{newboundsvino}
  Let $k$ and $s$ be integers with $k \geq 90000$ and \[
    0.138128k^2\le s\le \frac{1}{2}k^2\left(0.6494+\log\left(\frac{8k}{25D}\right)-\left(\frac{1-D/k}{2-D/k}\right)\frac{D}{k}+\frac{2.051}{k}\right),
    \]with $1\le D\le 0.4k$. Then
$$
J_{s, k}(P) \leq  k^{2.055 k^3-0.414 k^2+3 n k} 1.06^{n k^2+2\left(n^2-n\right) k+0.099912 k^3}P^{2 s-k(k+1) / 2+\Delta_s} \quad(P \geq 1),
$$
where, denoting with $(s \bmod k)$ the unique integer $u$ such that $0 \leq u<k$,
\begin{equation}
\begin{aligned}
\Delta_s \leq & \max\left(\frac{8}{25} k^2  \exp \left(0.6494-\frac{2(s-k)}{k^2}-\left(\frac{1-D / k}{2-D / k}\right) \frac{D}{k}+\frac{2.051}{ k}+\frac{2(s \bmod k)}{k^3}\right),\right.\\&\ \ \quad\ \  \left.Dk\exp\left(\frac{2(s \bmod k)}{k^2}\left(-1+\frac{1}{k}\right)\right)\right).
\end{aligned}
\end{equation}
    Furthermore, if $k \geq 129$, there is an integer $s \leq \rho k^2$ such that for $P \geq 1$,
$$
J_{s, k}(P) \leq k^{\theta k^3} P^{2 s-\frac{1}{2} k(k+1)+0.001 k^2},
$$
with
\[
\begin{cases}
\rho = 3.177207,\quad\theta = 2.40930,&\text{if }129 \le k \le 137\\ \\
\rho = 3.177527,\quad\theta = 2.39529,&\text{if }138 \le k \le 139\\ \\
\rho = 3.178551,\quad\theta = 2.39167,&\text{if }140\le  k \le 146\\ \\
\rho = 3.178871,\quad\theta = 2.38259,&\text{if } 147 \le k \le 148\\ \\
\rho = 3.181869,\quad\theta = 2.37929,&\text{if }149 \le k \le 170\\ \\
\rho = 3.184127,\quad \theta = 2.35334,&\text{if }171 \le k \le 190\\ \\
\rho = 3.192950,\quad \theta = 2.33313,&\text{if } 191\le k\le 339\\ \\
\rho = 3.196497,\quad \theta = 2.24352,&\text{if } 340\le k\le 499\\ \\
\rho =  3.205502,\quad \theta = 1.77775,&\text{if } 500\le k<90000\\ \\
\rho = 3.208630,\quad \theta = 2.17720,&\text{if }  k\ge 90000.
\end{cases}
\]
\end{theorem}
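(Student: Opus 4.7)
The plan is to split along the structure already indicated in the excerpt: apply Preobrazhenski\u{\i}'s refinement of the Vinogradov mean-value machinery from \cite{preobrazhenskii_new_2011} for $k \geq 90000$, use Ford's variant argument (\cite{ford_vinogradovs_2002}, p.~590) adapted to small $k$ for the tabulated range $129 \leq k < 90000$, and take Tyrina's explicit bounds \cite{tyr87} for $J_{s,k}(P)$ with $s \ll k^2$ as the base case from which the iteration is launched. In both regimes the skeleton is the same: one writes $J_{s,k}(P) \leq C_s\, P^{2s - k(k+1)/2 + \Delta_s}$, and the iteration lemma shows that passing from $s$ to $s+k$ shrinks $\Delta_s$ by roughly the factor $\exp(-2/k)$ while multiplying the prefactor $C_s$ by an explicit quantity controlled by the number of primes in $(\sqrt{R},R]$ and by fixed powers of $k$ and of $1.06$.

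For the first statement, I would carry out this iteration starting from Tyrina's bound at some $s_0 \asymp 0.138128\,k^2$ for which $\Delta_{s_0}$ is explicitly known, then iterate $n$ times to reach the target $s$. Tracking $\Delta_s$ yields the stated bound with a max of two terms: the first branch is the geometric decay of the initial $\Delta_{s_0}$ after $n$ steps, which accounts for the factor $\tfrac{8}{25}k^2$ and the exponent involving $0.6494 - 2(s-k)/k^2 + 2.051/k$; the second branch $Dk\exp(\cdots)$ comes from the one-off ``$D$-truncation'' step in Preobrazhenski\u{\i}'s lemma where the free parameter $D \in [1, 0.4k]$ controls how many initial congruence classes are discarded. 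Tracking $C_s$ across the $n$ iterations produces the prefactor $k^{2.055k^3 - 0.414k^2 + 3nk}\,1.06^{nk^2 + 2(n^2-n)k + 0.099912k^3}$, the $k^3$ coefficients being the accumulated per-step losses.

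For the tabulated part I would optimize the same recursion numerically: in each $k$-range, there exists a choice of $s \leq \rho k^2$ and of iteration parameters (including $D$ and the starting point from Tyrina's table) for which both $\Delta_s \leq 0.001\,k^2$ and the accumulated prefactor is bounded by $k^{\theta k^3}$. The break-points $k = 137, 139, 146, 148, 170, 190, 339, 499, 90000$ are precisely the values at which a different admissible starting bound from Tyrina or a different permissible $D$ becomes optimal, producing a smaller attainable $\theta$; within each interval the constants $(\rho,\theta)$ are chosen to minimize $\theta$ subject to the $\Delta_s \leq 0.001k^2$ constraint, which reduces to a finite computer search.

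The main obstacle will be propagating the numerical constants cleanly through many iterations: each step introduces a small multiplicative loss in both $C_s$ and $\Delta_s$, and to recover $B = 4.43795$ in Theorem~\ref{theorem2} one needs the tabulated $\theta$ as sharp as possible for every $k$ in the list. Ensuring that the Preobrazhenski\u{\i} step, Ford's variant for small $k$, and Tyrina's base case fit together with compatible choices of $s \bmod k$ and $D$ — so that no wasteful slack appears at the regime boundaries, in particular at $k = 90000$ where the second branch of the $\Delta_s$ bound activates — is the delicate point, but is conceptually routine given the existing machinery.
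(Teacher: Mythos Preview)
Your overall plan is essentially the paper's: seed with Tyrina, iterate a Ford/Preobrazhenski\u{\i}-style recursion to shrink $\Delta_s$, interpolate for general $s$, and run a finite computer search for the tabulated $(\rho,\theta)$. But several of your explanations of the mechanism are wrong, and following them would derail the execution.

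First, $D$ is not a congruence-class truncation parameter. The recursion is tracked only while $\Delta_{n-1}>k$; once $\Delta_{n-1}\le k$ one has $\Delta_n\le k\le Dk$ trivially for any $D\ge 1$, and the second branch $Dk\exp(\cdots)$ of the max is exactly this floor. The $D$-dependent piece in the exponent of the first branch comes from bounding $e^{-\delta_n}/(2-\delta_n)$ with $\delta_n=D/k$ at the end of the decay analysis, not from any combinatorial step. Second, the iteration is not launched at $s_0\approx 0.138128k^2$: Tyrina is used to obtain $\Delta_{n_0}\le 0.4k^2$ at $n_0=\lceil 0.1247k\rceil$, and $0.138128k$ is instead the threshold above which the accumulated-constant bound for $C_n$ is established, which is why it is the lower endpoint of the stated range for $s$. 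Third, for $129\le k<500$ the paper does \emph{not} use Tyrina at all; Program~1 starts from the trivial $\Delta_1=\tfrac12 k^2(1-1/k)$ and iterates with the $\phi_J$ of Lemma~\ref{lemma3.4p}. Only for $500\le k<90000$ does Program~2 seed from Tyrina's $\Delta_{n_0}\le 0.4k^2$, and the break-points in the table are where the numerical maxima of $s/k^2$ and $\theta$ jump, not where a different Tyrina input or $D$ becomes optimal. Finally, the passage from $s\equiv 0\pmod k$ to general $s$ is by H\"older interpolation between $J_{nk,k}$ and $J_{(n+1)k,k}$, which is precisely where the $(s\bmod k)$ terms in the $\Delta_s$ bound originate; your plan omits this step.
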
Using these new bounds for the Vinogradov integral we will prove the following result.\begin{theorem}\label{theorem1}
Suppose $N\ge 2$ is a positive integer, $N \leq t$ and set $\lambda=\frac{\log t}{\log N}$. Then
$$
S(N, t) \leq 8.7979 N^{1-1 /\left(132.94357\lambda^2\right)}.
$$
\end{theorem}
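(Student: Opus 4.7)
The plan is to follow the architecture of Ford's analysis in \cite{ford_vinogradovs_2002}, substituting the improved Vinogradov integral bounds from Theorem \ref{newboundsvino} and re-optimizing the auxiliary parameters throughout. Starting from the definition \eqref{Snt}, partial summation reduces $S(N,t)$ to controlling sums of the shape $\sum_{N\le n\le R}(n+u)^{-it}$, and H\"older's inequality combined with a Weyl-type differencing step bounds $|S(N,t)|^{2s}$ in terms of the number of solutions of a Diophantine system, controlled by either $J_{s,k}(P)$ or by the incomplete-system count $J_{s,k,h}(\mathscr{C}(P,R))$.

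The key technical device is the Taylor expansion of $t\log(n+u)$ about a central point of the summation interval. Writing $n=M+m$ and truncating the expansion at a suitable degree $k\asymp\lambda$ produces an exponential polynomial in $m$ whose coefficients are essentially $t/M^{j}$ for $1\le j\le k$; the tail is controlled so that it contributes negligibly to the sum. One then estimates the resulting mean value using the measure of the box of admissible coefficients together with a bound of the form $J_{s,k}(P)\ll P^{2s-k(k+1)/2+\Delta_{s}}$, which produces the desired saving $N^{-1/(u\lambda^{2})}$ with $u$ depending on $s$, $k$, and $\Delta_{s}$.

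As in \cite{ford_vinogradovs_2002}, the argument splits into three regimes of $\lambda$: a small-$\lambda$ regime in which the incomplete-system bound $J_{s,k,h}(\mathscr{C}(P,R))$ is combined with Tyrina-style estimates for moderate $k$; an intermediate regime handled by the tabulated $(\rho,\theta)$ bounds in the second part of Theorem \ref{newboundsvino}; and a large-$\lambda$ regime where the first part of Theorem \ref{newboundsvino} (valid for $k\ge 90000$, with the Preobrazhenski\u{\i}-type exponent $\Delta_{s}$) takes over. The critical case shifts from $\lambda=87$ in Ford's analysis to $\lambda=84$, and balancing the three regimes there is what yields the constant $132.94357$ in the exponent.

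The main obstacle will be the fine-grained numerical optimization near the critical value $\lambda\approx 84$: one must simultaneously choose the auxiliary parameters $(s,k,P,R,h,D)$ so as to minimise $u$ in the exponent, and then verify that the resulting worst-case bound dominates the estimates in the intermediate and large-$\lambda$ regimes. A secondary delicate point is the bookkeeping of the multiplicative constant $8.7979$, which aggregates the lower-order factors contributed by H\"older's inequality, the volume of the coefficient box, the $k^{\theta k^{3}}$ factor from Theorem \ref{newboundsvino}, and the reduction from a sum of length $R-N$ to a cleanly parametrised sum of length $P$.
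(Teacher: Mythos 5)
Your outline identifies the correct overall strategy --- follow Ford's architecture, substitute the improved bounds of Theorem \ref{newboundsvino}, and re-optimize near the shifted critical point $\lambda=84$ --- but as written it both misassigns the tools to the wrong ranges of $\lambda$ and omits the entire quantitative content of the proof. Concretely: the incomplete-system count $J_{s,g,h}(\mathscr{C}(P,R))$ is \emph{not} used in the small-$\lambda$ regime. It enters only for $\lambda\ge 84$, through Ford's Lemma 5.1 (Lemma \ref{lemma5.1ford}), where the double sum over $a\le M_1$, $b\in\mathscr{B}=\mathscr{C}(M_2,M_2^{\eta})$ is estimated by combining $J_{r,k}(\lfloor M_1\rfloor)$ (bounded via Theorem \ref{newboundsvino}) with $J_{s,g,h}(\mathscr{B})$ (bounded via Theorem \ref{th4ford}) and the factors $W_h\cdots W_g$. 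The Tyrina estimates likewise play no role in bounding $S(N,t)$ for small $\lambda$; they are used only inside the proof of Theorem \ref{newboundsvino} to obtain the improved starting point $\Delta_{n_0}\le 0.4k^2$. For $\lambda\le 84$ the paper does not redo anything: it simply observes that Ford's existing constants for $1\le\lambda\le 2.6$ and $2.6\le\lambda\le 84$ remain valid under the constraint $u\le 132.94357$, with the maximum $C=8.7979$ attained in $[83,84]$. Your plan to attack the small-$\lambda$ regime with incomplete systems would not reproduce this and is not how the critical case is handled.

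The second, larger gap is that the theorem is ultimately a numerical statement, and your proposal supplies none of the numerics. The proof requires explicit parameter choices ($\mu_1=0.1905$, $\mu_2=0.1603$, $k=\lfloor\lambda/(1-\mu_1-\mu_2)+0.000003\rfloor$, $r=\lfloor\rho k^2+1\rfloor$, $Y=288$, $\xi$, $\sigma$, and the integers $g,h,t,s$), a further split of the large-$\lambda$ range at $\lambda=220$, an interval-by-interval computer verification for $84\le\lambda\le 220$ (holding $m_1,m_2,k,a,b$ constant on each interval and checking $C\le 8.7979$, $u\le 132.94357$), and for $\lambda\ge 220$ an analytic bound culminating in $f(\gamma,\phi)\le -0.0242\ldots$ and $\lambda^2E\le -1/132.94357$. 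Without at least specifying these choices and explaining why the worst case sits at $\lambda=84$, the argument cannot be checked and the constants $8.7979$ and $132.94357$ are unjustified.
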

In proving both Theorem \ref{newboundsvino} and Theorem \ref{theorem1}, we tried to make near-optimal choices for all parameters that are used in the argument. Hence, it seems unlikely that a substantial improvement in the $B$ constant in Theorem \ref{theorem2} can be obtained via better choices of parameters alone.\\

An immediate corollary of Theorem \ref{theorem1} involving Dirichlet characters is the following one.
\begin{corollary}
Suppose $\chi$ is a Dirichlet character modulo $q$, where $q \leq N$ and $2 \leq N \leq q t$. Then
$$
\max _{N<R \leq 2 N}\left|\sum_{N<n \leq R} \chi(n) n^{-i t}\right| \leq 9.7979 \frac{\phi(q)}{q} N e^{-\frac{\log ^3(N / q)}{132.94357 \log ^2 t}}.
$$
\end{corollary}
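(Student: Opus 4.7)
\emph{Proof proposal.} The plan is to reduce the character sum to exponential sums handled by Theorem~\ref{theorem1} via a residue--class decomposition modulo $q$. Since $\chi(n)$ vanishes when $\gcd(n,q) > 1$, for each $r \in \{1,\ldots,q\}$ with $\gcd(r,q)=1$ I write $n = qm + r$, giving $n^{-it} = q^{-it}(m + r/q)^{-it}$ and
\begin{equation*}
\sum_{N < n \le R}\chi(n) n^{-it} \;=\; q^{-it}\sum_{\substack{r=1\\ \gcd(r,q) = 1}}^{q}\chi(r)\sum_{m \in I_r}\bigl(m + \tfrac{r}{q}\bigr)^{-it},
\end{equation*}
where $I_r = \{m\in\mathbb{Z}_{\ge 0}:\ N < qm + r \le R\}$. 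Applying the triangle inequality with $|\chi(r)| \le 1$ reduces the problem to bounding each inner sum $T_r$.

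For each $r$ the interval $I_r$ has the form $(\tilde N_r, \tilde R_r]$ with $\tilde N_r \le N/q$ and length at most $(R-N)/q \le N/q$, and the shift parameter is $u = r/q \in (0,1]$. Up to a single boundary integer that may violate the dyadic condition $\tilde R_r \le 2\tilde N_r$ or satisfy $\tilde N_r < 2$, Theorem~\ref{theorem1} applies with $N' = \tilde N_r$ and yields
\begin{equation*}
|T_r| \;\le\; 8.7979\, \tilde N_r^{1 - 1/(132.94357\,\lambda_r^2)} + 1,\qquad \lambda_r = \frac{\log t}{\log \tilde N_r},
\end{equation*}
the $+1$ absorbing the possibly omitted endpoint and the degenerate $\tilde N_r < 2$ case. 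Rewriting the main term as $g(\tilde N_r)$ with $g(x) := x\exp\bigl(-\log^3 x/(132.94357\log^2 t)\bigr)$, a direct derivative computation shows $g$ is increasing on $[2,\, t^{\sqrt{132.94357/3}}]$, an interval that contains $[2,\,N/q]$ because $N/q \le t$ by hypothesis; hence $g(\tilde N_r) \le g(N/q)$.

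Summing over the $\phi(q)$ admissible residues then gives
\begin{equation*}
\biggl|\sum_{N < n \le R}\chi(n) n^{-it}\biggr| \;\le\; 8.7979\, \phi(q)\cdot\frac{N}{q}\exp\!\left(-\frac{\log^3(N/q)}{132.94357\log^2 t}\right) + \phi(q).
\end{equation*}
Since $q \le N$, the error $\phi(q)$ is at most $(\phi(q)/q)\,N$, and absorbing it raises the leading constant from $8.7979$ to $9.7979$, producing the stated bound uniformly in $R$.

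\textbf{Main obstacle.} The decomposition is routine in spirit; the delicate step is the dyadicity bookkeeping. The hypothesis $N < R \le 2N$ in the definition of $S(N,t)$ becomes, after dividing by $q$, $\tilde R_r \le 2\tilde N_r - O(1)$, and can fail by a single integer per residue class. Isolating that at-most-one-per-class contribution and bounding it by $1$ uniformly in $r$ — while also verifying monotonicity of $g$ over the full admissible range of $\tilde N_r$ — is what produces the explicit $+1$ in the coefficient and therefore the prescribed constant $9.7979$.
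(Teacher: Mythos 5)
Your decomposition into reduced residue classes modulo $q$, reduction of each inner sum to $S(\cdot,t)$ with shift $u=r/q$, and absorption of the per-class boundary error into the $+1$ in the constant is exactly the argument of Ford's Corollary 2A, which is the proof the paper invokes verbatim. One small correction: absorbing the $\phi(q)$ error into the main term requires $\phi(q)\le(\phi(q)/q)\,N e^{-\log^3(N/q)/(132.94357\log^2 t)}$, equivalently $\log(N/q)\le\sqrt{132.94357}\,\log t$, which follows from the hypothesis $N\le qt$ rather than merely from $q\le N$ as you state.
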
The proof is the same as that of Corollary 2A in \cite{ford_vinogradovs_2002}.

\section{Background}
We recall some preliminary results we will use later in the proof of the theorems.\\
 First of all, given a fixed $k$, suppose $0 \leq d \leq k-1$ and $T$ is a positive integer. The $k$-tuple of polynomials $\boldsymbol{\Psi}=\left(\Psi_1, \ldots, \Psi_k\right) \in \mathbb{Z}[x]^k$ is said to be of type $(d, T)$ if $\Psi_j$ is identically zero for $j \leq d$, and for some integer $m \geq 0$, when $j>d,\  \Psi_j$ has degree $j-d$ with leading coefficient $\frac{j !}{(j-d) !} 2^m T$.\\
Then, let $K_{s, d}(P, Q ; \boldsymbol{\Psi} ; q)$ be the number of solutions of$$
\sum_{i=1}^{k-d}\left(\Psi_j\left(z_i\right)-\Psi_j\left(w_i\right)\right)+q^j \sum_{i=1}^s\left(x_i^j-y_i^j\right)=0 \quad(1 \leq j \leq k),
$$with $1 \leq z_i, w_i \leq P $ and $ 1 \leq x_i, y_i \leq Q$. Also, let $L_{s, d}(P, Q ; \boldsymbol{\Psi} ; p, q, r)$ be the number of solutions of $$\sum_{i=1}^{k-d}\left(\Psi_j\left(z_i\right)-\Psi_j\left(w_i\right)\right)+(p q)^j \sum_{i=1}^s\left(u_i^j-v_i^j\right)=0 \quad(1 \leq j \leq k) $$with $1 \leq z_i, w_i \leq P$, $ z_i \equiv w_i\left(\bmod\  p^r\right)$ and $ 1 \leq u_i, v_i \leq Q$.
\begin{lemma}[\cite{ford_vinogradovs_2002} Lemma 3.2']\label{lemma3.2p}Suppose $k, r, d$ and $s$ are integers with
$$
k \ge 4 ; \quad 2 \le r \le k ; \quad 0 \le d \le r-1 ; \quad s \ge d+1 .
$$
Let $M, P$ and $Q$ be real numbers with
$$
P^{1 /(k+1)} \le M \le P^{1 / r} ; \quad 32 s^2 M<Q \le P, \quad M \ge k.
$$
Suppose $q$ is a positive integer and $\boldsymbol{\Psi}$ is a system of polynomials of type $(d, T)$ with $T \le P^d$. Denote by $\mathscr{P}$ the set of the $k^3$ smallest primes greater than $M$, and suppose $\mathscr{P} \subset(M, 2 M]$. Then there are a system of polynomials $\boldsymbol{\Phi}$ of type $(d, T)$ and a prime $p \in \mathscr{P}$ such that
    $$
K_{s, d}(P, Q ; \boldsymbol{\Psi} ; q) \leq 4 k^3 k ! p^{2 s+(r-d)(r-d+1) / 2} L_{s, d}(P, Q ; \boldsymbol{\Phi} ; p, q, r).
$$
\end{lemma}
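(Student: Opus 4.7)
The proof plan follows the template of Ford's Lemma 3.2 in \cite{ford_vinogradovs_2002}, adapted to the refined type-$(d,T)$ structure. The overall idea is a $p$-adic congruencing step: locate a prime $p \in \mathscr{P}$ and a new polynomial system $\boldsymbol{\Phi}$ of the same type $(d,T)$ such that the solutions counted by $K_{s,d}(P,Q;\boldsymbol{\Psi};q)$ can be reparameterised as solutions of $L_{s,d}(P,Q;\boldsymbol{\Phi};p,q,r)$ (in particular with $z_i \equiv w_i \pmod{p^r}$), at the cost of the combinatorial factor $4k^3 k! p^{2s+(r-d)(r-d+1)/2}$.

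I would begin by expressing $K_{s,d}$ via orthogonality as an integral of a product of exponential sums, separating the $z,w$-variables (carrying the phases $\Psi_j$) from the $x,y$-variables (carrying the monomial phases $q^j x^j$). For each prime $p \in \mathscr{P}$, sort the $(k-d)$-tuples $(z_1,\ldots,z_{k-d})$ by their residues modulo $p$. The key geometric input is that any pair of distinct integers in $[1,P]$ is divisible by at most $\log_M P \le k+1$ of the primes in $\mathscr{P}$; hence, after averaging over the $k^3$ primes in $\mathscr{P}$, \emph{diagonal} tuples with repeated residues contribute a negligible fraction of the total. Pigeonholing then produces a single $p \in \mathscr{P}$ on which \emph{generic} tuples (with $k-d$ distinct residues) dominate, contributing the factor $k^3$ for the choice of $p$; fixing an ordering of the $k-d$ distinct residues contributes a further factor of $k!$.

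Next, for fixed residues $a_1,\ldots,a_{k-d}$ modulo $p$, perform the substitution $z_i = a_i + p z_i'$ (and analogously for $w_i$). Iterated up to modulus $p^r$, this transforms $\Psi_j$ into a polynomial $\Phi_j$ whose leading coefficient is inflated by $p^{j-d}$ for each $j = d+1,\ldots,r$; the product $\prod_{j=d+1}^{r} p^{j-d} = p^{(r-d)(r-d+1)/2}$ matches the exponent in the lemma, and absorbing the common $p$-part into a redefined parameter keeps $\boldsymbol{\Phi}$ of type $(d,T)$. The congruence $z_i \equiv w_i \pmod{p^r}$ matches the definition of $L_{s,d}$, while the factor $p^{2s}$ arises from fixing the residues of the $x_i,y_i$ modulo $p$ inside the $(pq)^j$ phases. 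The constant $4$ absorbs a dyadic loss from the pigeonholing.

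The main obstacle will be the bookkeeping of the type-$(d,T)$ structure under the substitution: one must verify that the leading coefficients $\frac{j!}{(j-d)!} 2^m T$ transform consistently so that $\boldsymbol{\Phi}$ is genuinely of the same type (possibly with a shifted dyadic parameter $m$), with all coefficients remaining in $\mathbb{Z}$. The hypotheses $P^{1/(k+1)} \le M \le P^{1/r}$ and $32 s^2 M < Q \le P$ are used precisely to keep the lifted leading coefficients bounded by $P^d$ and to ensure that the inner sum over $x,y$ does not shrink below the target length $Q$. A secondary difficulty is confirming that the losses from discarding diagonal tuples in the averaging step are genuinely absorbed by the factor $4k^3 k!$, rather than accumulating across the $\binom{k-d}{2}$ potentially bad pairs.
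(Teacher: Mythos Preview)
The paper does not give its own proof of this lemma: it is merely quoted from \cite{ford_vinogradovs_2002} as Lemma~3.2$'$ and used as a black box, so there is no argument in the present paper to compare your proposal against. Your sketch is a reasonable outline of Ford's $p$-adic congruencing step (pigeonhole over primes in $\mathscr{P}$, sort tuples by residues mod $p$, discard diagonal classes, substitute $z_i\mapsto a_i+pz_i'$ and iterate to modulus $p^r$), and the combinatorial factors $k^3$, $k!$, and $p^{(r-d)(r-d+1)/2}$ are attributed to the correct sources.

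One point of caution in your write-up: the factor $p^{2s}$ does not come from ``fixing the residues of the $x_i,y_i$ modulo $p$'' --- in the passage from $K_{s,d}$ to $L_{s,d}$ the $x,y$-variables are untouched (they run over $[1,Q]$ with phases $q^j$, which become $(pq)^j$ after the outer substitution), and the $p^{2s}$ instead arises from a Cauchy--Schwarz/H\"older step bounding the mean value of the inner sum over the $2s$ variables $x_i,y_i$. Similarly, the type-$(d,T)$ structure is preserved not by inflating $T$ but by absorbing the $p$-powers into the dyadic parameter $2^m$; you should check that this actually works with $m$ an integer. These are bookkeeping issues rather than strategic ones, but since the paper itself supplies no details you would need to consult \cite{ford_vinogradovs_2002} directly to get them right.
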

\begin{lemma}[\cite{ford_vinogradovs_2002} Lemma 3.3']\label{lemma3.3p}Suppose that $s \geq d, k \geq r \geq 2, d \leq k-2, q \geq 1, $ $p$ is a prime and $\boldsymbol{\Phi}$ is a system of polynomials of type $(d, T)$. Then there is a system of polynomials $\boldsymbol{\Upsilon}$ of type $\left(d+1, T^{\prime}\right)$ with $T \leq T^{\prime} \leq P T$ such that
\begin{align*}
    L_{s, d}(P ; Q ; \boldsymbol{\Phi} ; p, q, r)&\le(2 P)^{k-d} \max \left[k^{k-d} J_{s, k}(Q), 2 p^{-r(k-d)} J_{s, k}(Q)^{(k-d-2) /(2(k-d-1))}\right.\\&\left.\times K_{s, d+1}(P, Q ; \boldsymbol{\Upsilon} ; p q)^{(k-d) /(2(k-d-1))}\right].
\end{align*}

\end{lemma}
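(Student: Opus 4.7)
The plan is to adapt the $p$-adic efficient-differencing step that underlies Lemma 3.3 of Ford \cite{ford_vinogradovs_2002}, of which this lemma is a minor variant. First, express $L_{s,d}(P,Q;\boldsymbol\Phi;p,q,r)$ via orthogonality as
\[
L_{s,d}=\int_{[0,1]^k} B(\boldsymbol\alpha)^{k-d}\,|f_{pq}(\boldsymbol\alpha)|^{2s}\,d\boldsymbol\alpha,
\]
where
\[
B(\boldsymbol\alpha)=\sum_{\substack{1\le z,w\le P\\ z\equiv w\ (p^r)}} e\!\left(\sum_{j=1}^k\alpha_j(\Phi_j(z)-\Phi_j(w))\right),\qquad f_{pq}(\boldsymbol\alpha)=\sum_{u=1}^{Q} e\!\left(\sum_{j=1}^k\alpha_j(pq)^j u^j\right).
\]
I would then resolve the congruence by setting $z=w+p^r y$ with integer $y$; each pair $(z,w)$ is reparameterised by $(w,y)$ with $w$ in an interval of length $\le P$ and $|y|\le P/p^r$, so at most $2P/p^r$ admissible $y$-values occur per $w$. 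The Taylor expansion
\[
\Phi_j(w+p^r y)-\Phi_j(w)=\sum_{\ell=1}^{j-d}\frac{(p^r y)^\ell}{\ell!}\Phi_j^{(\ell)}(w)\qquad(j>d),
\]
zero for $j\le d$, rewrites the exponent as a bilinear form in $y$ and polynomials of $w$. Reading off the leading-in-$w$ coefficients identifies a new polynomial system $\boldsymbol\Upsilon$ of type $(d+1,T')$: each $\Upsilon_j$ has degree $j-d-1$ with leading coefficient of the prescribed shape $\frac{j!}{(j-d-1)!}2^{m'}T'$, and the constraint $T\le T'\le PT$ reflects the fact that the original leading coefficient of $\Phi_j$ is multiplied by $p^r y$ with $|p^r y|\le P$.

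The heart of the argument is then a dichotomy on the tuple $\mathbf y=(y_1,\ldots,y_{k-d})$ appearing after raising $B$ to the $(k-d)$-th power. I would split into a \emph{degenerate} regime (those $\mathbf y$ for which the Vandermonde-type linear system generated by $(p^ry_i)^\ell$, $1\le\ell\le k-d$, is singular, equivalently, some coincidence of the $y_i$ modulo $p$) and a \emph{non-degenerate} regime. Degenerate tuples lie on a proper subvariety whose elementary counting, combined with the trivial estimate of the $B$-integrand against $|f_{pq}|^{2s}$, yields the first candidate $(2P)^{k-d}k^{k-d}J_{s,k}(Q)$ inside the maximum. For non-degenerate tuples, a linear change of variables decouples the $\mathbf y$-dependence and rewrites the corresponding contribution in the form required by $K_{s,d+1}(P,Q;\boldsymbol\Upsilon;pq)$: the modulus $pq$ arises because one factor of $p$ from the $p^r$ of the Taylor step is absorbed into $\boldsymbol\Upsilon$, and the residual $p^{-r(k-d)}$ is tracked through the Jacobian. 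Combining the resulting $B^{k-d}$ factor with $|f_{pq}|^{2s}$ by H\"older's inequality with conjugate exponents $\beta=(k-d-2)/(2(k-d-1))$ and $\gamma=(k-d)/(2(k-d-1))$ (so $\beta+\gamma=1$), written as $(|f_{pq}|^{2s})^{\beta}\cdot(\text{integrand for }K_{s,d+1})^{\gamma}$, produces the second candidate $2p^{-r(k-d)}J_{s,k}(Q)^\beta K_{s,d+1}(P,Q;\boldsymbol\Upsilon;pq)^\gamma$. Taking the maximum of the two contributions gives the stated bound.

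The main obstacle is the polynomial bookkeeping rather than the analytic inequalities. One must verify that the composition of the Taylor expansion with the Vandermonde diagonalisation in $\mathbf y$ genuinely produces a $k$-tuple of polynomials whose type is precisely $(d+1,T')$ with $T'\in[T,PT]$; this amounts to exact algebraic identities requiring careful tracking of the parameters $m,m'$ (the powers of $2$) and the factorials in the leading coefficients. Equally delicate is ensuring that the $p$-adic factors combine cleanly so that the new modulus becomes exactly $pq$ and the Jacobian contributes exactly $p^{-r(k-d)}$. Once these identities are in place, the H\"older step and the degenerate/non-degenerate split are essentially mechanical.
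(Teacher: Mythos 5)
First, a point of reference: the paper does not prove this lemma at all --- it is quoted verbatim as Lemma 3.3$'$ of Ford \cite{ford_vinogradovs_2002} and used as a black box, so the only ``proof'' in the paper is the citation. Your reconstruction does follow the correct general method behind Ford's lemma (the Karatsuba--Stechkin $p$-adic differencing step): the integral representation $L=\int_{[0,1]^k} B^{\,k-d}|f_{pq}|^{2s}$, the substitution $z=w+p^{r}y$ with at most $2Pp^{-r}$ admissible $y$, the observation that $\Phi_j(w+p^{r}y)-\Phi_j(w)$ has degree $j-d-1$ with leading coefficient $\frac{j!}{(j-d-1)!}\,p^{r}y\,2^{m}T$, whence a system of type $(d+1,T')$ with $T\le T'\le PT$, and a H\"older interpolation with exponents $\frac{k-d-2}{2(k-d-1)}+\frac{k-d}{2(k-d-1)}=1$ that converts $|B|^{k-d}$ into $|B|^{2(k-d-1)}$, i.e.\ into the generating function of $K_{s,d+1}$ with its $k-d-1$ pairs of $P$-variables.

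However, the two bounds inside the maximum \emph{are} the content of the lemma, and neither is actually derived. The dichotomy you propose --- degenerate versus non-degenerate tuples $\mathbf{y}$, with the degenerate ones producing $(2P)^{k-d}k^{k-d}J_{s,k}(Q)$ --- does not work as described: bounding the contribution of a codimension-one family of $\mathbf{y}$-tuples trivially still leaves a factor of order $(Pp^{-r})^{k-d-1}$ from the remaining free $y_i$, which is absent from the first term, and no subvariety count produces the factor $k^{k-d}$. The natural source of that factor is a Linnik/B\'ezout-type counting lemma (Ford's Lemma 3.1), which bounds the number of preimages of $(x_1,\dots,x_{k-d})\mapsto\bigl(\sum_i\Psi_j(x_i)\bigr)_{d<j\le k}$ by $k^{k-d}$; the case split is governed by whether $|B(\boldsymbol{\alpha})|$ is dominated by its diagonal part $B_0=\lfloor P\rfloor$ (first term) or by its off-diagonal part (second term), not by degeneracy of $\mathbf{y}$. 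Two further signs of hand-waving: the modulus $pq$ in $K_{s,d+1}(\cdot\,;pq)$ does not ``arise'' from absorbing a factor of $p$ --- it is already, by definition, the modulus of the $u,v$-part of $L_{s,d}(\cdot\,;p,q,r)$ and is simply carried over; and $p^{-r(k-d)}$ is not a Jacobian but merely the identity $(2P)^{k-d}p^{-r(k-d)}=(2Pp^{-r})^{k-d}$, the number of admissible $\mathbf{y}$. You would also need to address the fact that $\boldsymbol{\Upsilon}$ must be a single system independent of $\mathbf{y}$ (it is pinned down by the maximizing value of $y$ once the sum over $y$ has been estimated), and that for $y<0$ the leading coefficient must still be put in the prescribed positive form. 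As it stands, the proposal is a plausible outline of Ford's method rather than a proof.
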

As per Ford \cite{ford_vinogradovs_2002}, Lemma \ref{lemma3.2p} and Lemma \ref{lemma3.3p} imply the following result.
\begin{lemma}\label{lemma3.4p}
   Suppose $k \geq 26,4 \leq r \leq k, k \leq s \leq k^3$ and
$$
J_{s, k}(Q) \leq C Q^{2 s-k(k+1) / 2+\Delta} \quad(Q \geq 1) .
$$
Let $j$ be an integer satisfying
\begin{equation}\label{condj}
    2 \leq j \leq \frac{9}{10} r, \quad(j-1)(j-2) \leq 2 \Delta-(k-r)(k-r+1) .
\end{equation}

Define $$
\phi_J=\frac{1}{2 r}+\frac{k^2+k+r^2-r-2 \Delta-2 r J}{4 r(k-J)} \phi_{J+1} \quad(1 \leq J \leq j-1),
$$
and suppose $r$ and $j$ are chosen so that $\phi_i \geq 1 /(k+1)$ for every $i$. Suppose
$$
\frac{1}{3 \log k} \leq \omega \leq \frac{1}{2}, \quad \eta=1+\omega, \quad V=\max \left(e^{1.5+1.5 / \omega}, \frac{18}{\omega} k^3 \log k\right).
$$
If $P \geq V^{k+1}$, then
$$
J_{s+k, k}(P) \leq k^{3 k} \eta^{4 s+k^2} C P^{2(s+k)-k(k+1) / 2+\Delta^{\prime}}
$$
where $$\Delta^{\prime}=\Delta\left(1-\phi_1\right)-k+\frac{1}{2} \phi_1\left(k^2+k+r^2-r\right).$$
\end{lemma}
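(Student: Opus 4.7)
The plan is to bound $J_{s+k,k}(P)$ by iterated $p$-adic descent, packaged via the auxiliary counting functions $K_{s,d}$ and $L_{s,d}$ from Lemma~\ref{lemma3.2p} and Lemma~\ref{lemma3.3p}. First I would relate $J_{s+k,k}(P)$ to $K_{s,0}(P,Q;\boldsymbol{\Psi}_0;1)$ for an auxiliary parameter $Q$ and a trivial system $\boldsymbol{\Psi}_0$ of type $(0,1)$: splitting the $(s+k)$-fold exponential sum into a $k$-fold piece of length $P$ and an $s$-fold piece of length $Q$ and using orthogonality gives an inequality of the shape $J_{s+k,k}(P)\leq C_0(k)\,P^{2k}\,Q^{-k(k+1)}K_{s,0}(P,Q;\boldsymbol{\Psi}_0;1)$, with $Q$ to be chosen at the end so as to balance the contributions from the $j$ descent levels.

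For $d=0,1,\dots,j-1$ I would then alternately apply Lemma~\ref{lemma3.2p} to pass from $K_{s,d}$ to $L_{s,d}$ (using a prime $p\in(M,2M]$ with $M\in[Q^{1/(k+1)},Q^{1/r}]$) and Lemma~\ref{lemma3.3p} to pass from $L_{s,d}$ to a bound involving $J_{s,k}(Q)$ and $K_{s,d+1}(P,Q;\boldsymbol{\Upsilon};pq)$. The conditions in \eqref{condj} and the constraint $\phi_i\geq 1/(k+1)$ are what guarantee, respectively, that we remain in the non-trivial branch of the $\max$ in Lemma~\ref{lemma3.3p} and that the admissible window for $M$ at every level is non-empty. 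Substituting $J_{s,k}(Q)\leq CQ^{2s-k(k+1)/2+\Delta}$ inside the Hölder exponent $(k-d)/(2(k-d-1))$ and absorbing the prime-selection loss $p^{-r(k-d)}\asymp M^{-r(k-d)}$, the fractional power of $Q$ inherited by $K_{s,d+1}$ at the next level is exactly $\phi_{d+1}$, and the stated backward recursion
\[
\phi_J=\frac{1}{2r}+\frac{k^2+k+r^2-r-2\Delta-2rJ}{4r(k-J)}\,\phi_{J+1}
\]
is precisely what is needed to keep this book-keeping closed.

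After $j$ iterations I would terminate with the trivial bound $K_{s,j}\leq (2P)^{2(k-j)}J_{s,k}(Q)$, substitute $J_{s,k}(Q)\leq CQ^{2s-k(k+1)/2+\Delta}$ a last time, and optimise by choosing $Q\sim P^{1-\phi_1}$. The exponent of $P$ then collapses to $2(s+k)-k(k+1)/2+\Delta'$ with $\Delta'=\Delta(1-\phi_1)-k+\tfrac12\phi_1(k^2+k+r^2-r)$, yielding the main exponent of the claimed inequality. The hypothesis $P\geq V^{k+1}$ with $V=\max(e^{1.5+1.5/\omega},18\omega^{-1}k^3\log k)$ and $\omega\geq 1/(3\log k)$ is exactly what permits the accumulated combinatorial losses ($k!$, $k^3$ primes, $(2P)^{k-d}$ at each level, together with the factor $p^{2s+(r-d)(r-d+1)/2}$ coming from Lemma~\ref{lemma3.2p}) to be absorbed into the prefactor $k^{3k}\eta^{4s+k^2}C$.

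The main obstacle is the simultaneous book-keeping of three things: (i) the fractional exponents of $Q$ and $P$ across $j$ Hölder iterations, which is the content of the $\phi_J$ recursion; (ii) the verification, at every level, that the evolving polynomial system $\boldsymbol{\Upsilon}$ really is of type $(d+1,T')$ with $T\leq T'\leq PT$, so that Lemma~\ref{lemma3.2p} applies again; and (iii) the aggregation of the multiplicative $(1+\omega)$-slacks introduced at each level into exactly the final $\eta^{4s+k^2}$ factor. Calibrating $V$ and $\omega$ so that this aggregation closes (rather than leaving residual factors of $\eta^{cks}$ with $c>4/k$) is the numerically delicate part of the argument, and it is here that the precise form of $V$ and the lower bound $\omega\geq 1/(3\log k)$ enter.
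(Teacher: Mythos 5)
Your overall strategy is the paper's strategy: the paper gives no proof of this lemma, stating only that it follows from Lemma~\ref{lemma3.2p} and Lemma~\ref{lemma3.3p} ``as per Ford'' --- it is the small-$k$ variant of Ford's Lemma~3.4 described on p.~590 of \cite{ford_vinogradovs_2002}. You correctly identify the engine: alternating applications of Lemma~\ref{lemma3.2p} and Lemma~\ref{lemma3.3p}, with \eqref{condj} keeping the iteration in the useful branch of the max, the condition $\phi_i\ge 1/(k+1)$ guaranteeing that the window $P^{1/(k+1)}\le M\le P^{1/r}$ for the auxiliary primes is nonempty, and $P\ge V^{k+1}$ absorbing the accumulated losses into $k^{3k}\eta^{4s+k^2}$.

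There are, however, two problems. First, the entry point is simply $J_{s+k,k}(P)=K_{s,0}(P,P;\boldsymbol{\Psi}_0;1)$ for the trivial system of type $(0,1)$; the inequality you propose, $J_{s+k,k}(P)\le C_0(k)P^{2k}Q^{-k(k+1)}K_{s,0}(P,Q;\boldsymbol{\Psi}_0;1)$, is not correct as stated (for $Q<P$ the right-hand count only shrinks, and no orthogonality argument produces that compensating factor). The quantities $P^{\phi_i}$ enter the exponent through the primes $p_i\in(M_i,2M_i]$ with $M_i=P^{\phi_i}$ selected at each application of Lemma~\ref{lemma3.2p}, not through an end-game optimisation over a free parameter $Q$. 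Second, and more seriously, the entire content of this lemma relative to Ford's published Lemma~3.4 (restated here as Lemma~\ref{lemma3.4f}) is the sharper recursion
\[
\phi_J=\frac{1}{2r}+\frac{k^2+k+r^2-r-2\Delta-2rJ}{4r(k-J)}\,\phi_{J+1}
\quad\text{versus}\quad
\phi_J=\frac{1}{2r}+\frac{k^2+k+r^2-r+J^2-J-2\Delta}{4kr}\,\phi_{J+1},
\]
and this difference is what drives the paper's improved $(\rho,\theta)$ for $k<90000$. Your sketch asserts that the stated recursion ``is precisely what is needed to keep this book-keeping closed'' without performing the book-keeping; the identical sentence would equally well ``justify'' Ford's weaker recursion. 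A proof must actually unwind, level by level, the H\"older exponents $(k-d)/(2(k-d-1))$ together with the factors $p^{2s+(r-d)(r-d+1)/2}$ and $p^{-r(k-d)}$, and exhibit where the coefficient $(k^2+k+r^2-r-2\Delta-2rJ)/(4r(k-J))$, rather than $(k^2+k+r^2-r+J^2-J-2\Delta)/(4kr)$, arises. That derivation is the lemma, and it is missing from the proposal.
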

We will use the definition of $\phi_J$ given in Lemma \ref{lemma3.4p} for the range $k< 90000$. However, when $k$ becomes large, the influence that the new funcitons $\Phi_j$ have on estimating the Vinogradov's integral becomes negligible, as already pointed out by Ford in \cite{ford_vinogradovs_2002} (p. 590). Hence, for $k\ge 90000$, we use the following result used by Ford in his work.
\begin{lemma}[\cite{ford_vinogradovs_2002} Lemma 3.4]\label{lemma3.4f}
       Suppose $k \geq 26,4 \leq r \leq k, k \leq s \leq k^3$ and
$$
J_{s, k}(Q) \leq C Q^{2 s-k(k+1) / 2+\Delta} \quad(Q \geq 1) .
$$
Let $j$ be an integer satisfying the same relations as in \eqref{condj}. Then, define $$
\phi_j=\frac{1}{r}, \quad \phi_J=\frac{1}{2 r}+\frac{k^2+k+r^2-r+J^2-J-2 \Delta}{4 k r} \phi_{J+1} \quad(1 \leq J \leq j-1),
$$
and suppose $r$ and $j$ are chosen so that $\phi_i \geq 1 /(k+1)$ for every $i$. Suppose
$$
\frac{1}{3 \log k} \leq \omega \leq \frac{1}{2}, \quad \eta=1+\omega, \quad V=\max \left(e^{1.5+1.5 / \omega}, \frac{18}{\omega} k^3 \log k\right).
$$
If $P \geq V^{k+1}$, then
$$
J_{s+k, k}(P) \leq k^{3 k} \eta^{4 s+k^2} C P^{2(s+k)-k(k+1) / 2+\Delta^{\prime}}
$$
where $$\Delta^{\prime}=\Delta\left(1-\phi_1\right)-k+\frac{1}{2} \phi_1\left(k^2+k+r^2-r\right).$$
\end{lemma}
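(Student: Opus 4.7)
My plan is to reproduce Ford's iterative argument, which proves the statement by repeatedly lifting the ``type depth'' $d$ of an auxiliary polynomial system from $0$ up to $j$, using the unprimed analogues of Lemma~\ref{lemma3.2p} and Lemma~\ref{lemma3.3p} from the same source (these have the same structural shape but slightly different numerical constants, producing the recursion recorded in the statement).  First I would embed $J_{s+k,k}(P)$ into a count $K_{s,0}(P,Q;\boldsymbol{\Psi}_{0};1)$ for the trivial system $\boldsymbol{\Psi}_{0}$ of type $(0,1)$, after selecting $M\in[P^{1/(k+1)},P^{1/r}]$ so that the $k^{3}$ smallest primes above $M$ lie in $(M,2M]$, and choosing $Q\asymp P^{1-1/r}$ up to an $\eta$-factor so that $32s^{2}M<Q\le P$ and $M\ge k$ all hold.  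The specific form of $V$ and the hypothesis $P\ge V^{k+1}$ are tuned precisely to preserve these size conditions across every iteration.

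At the $J$-th step ($0\le J\le j-1$) I would apply the unprimed analogue of Lemma~\ref{lemma3.2p} to pass from $K_{s,J}$ to $L_{s,J}$, acquiring a factor of the form $4k^{3}k!\,p^{2s+(r-J)(r-J+1)/2}$ for some prime $p\in(M,2M]$, and then the unprimed analogue of Lemma~\ref{lemma3.3p} to bound $L_{s,J}$ by $(2P)^{k-J}$ times the maximum of a ``pure'' term $k^{k-J}J_{s,k}(Q)$ and a ``mixed'' term proportional to $J_{s,k}(Q)^{(k-J-2)/(2(k-J-1))}K_{s,J+1}(\cdots)^{(k-J)/(2(k-J-1))}$.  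For $J<j$ I would take the mixed term, substitute the hypothesis $J_{s,k}(Q)\le CQ^{2s-k(k+1)/2+\Delta}$, and read off the exponent of $Q$: collecting terms produces exactly
\[
\phi_{J}=\frac{1}{2r}+\frac{k^{2}+k+r^{2}-r+J^{2}-J-2\Delta}{4kr}\,\phi_{J+1},
\]
where $\phi_{J+1}$ encodes the weight attached to the residual $K_{s,J+1}$-count.  At the terminal step $J=j$, the conditions in \eqref{condj} ensure that the pure term dominates, so one sets $\phi_{j}=1/r$ and the iteration closes without ever having to estimate $K_{s,j}$ further.

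Assembling the multiplicative contributions then yields the final bound.  The powers of $p\le 2P^{1/r}$ from the $j$ invocations of the $3.2$-type lemma and the $p^{-r(k-J)}$ from the $3.3$-type lemma combine to contribute exactly $\tfrac{1}{2}\phi_{1}(k^{2}+k+r^{2}-r)$ to $\Delta'$; the $-k$ offset arises from aligning the exponent $2s$ in the hypothesis with the target exponent $2(s+k)$; the term $\Delta(1-\phi_{1})$ is the surviving part of the original deficit after the recursion has folded through all $j$ steps; and the combinatorial overhead together with the cumulative slack from replacing $Q$ by $P$ is absorbed into the prefactor $k^{3k}\eta^{4s+k^{2}}$, with $\eta=1+\omega$ accounting for the $(P/Q)$-losses at each iteration.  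The principal obstacle is the uniform verification, across all $j$ iterations, that $\phi_{i}\ge 1/(k+1)$ (which is what keeps the choice of $M$ admissible) and that the size hypotheses $32s^{2}M<Q\le P$ and $M\ge k$ continue to hold; both are controlled precisely by the choice of $V$ and the hypothesis $P\ge V^{k+1}$, after which the remaining work is essentially algebraic bookkeeping of exponents and constants.
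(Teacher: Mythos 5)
This lemma is quoted verbatim from Ford's paper \cite{ford_vinogradovs_2002} and is not proved in the present paper, so the only meaningful comparison is with Ford's own argument; your outline correctly reconstructs it — the embedding of $J_{s+k,k}(P)$ into $K_{s,0}$, the iterative passage $K_{s,J}\to L_{s,J}\to K_{s,J+1}$ via the unprimed analogues of Lemma~\ref{lemma3.2p} and Lemma~\ref{lemma3.3p}, the $\phi_J$ recursion from taking the mixed term for $J<j$, and termination at $J=j$ via the pure term with $\phi_j=1/r$. Since this is essentially the same route as the source, what remains is only the quantitative bookkeeping you explicitly defer, which is where the prefactor $k^{3k}\eta^{4s+k^2}$ and the exact expression for $\Delta'$ are actually verified.
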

For a given $k, r$ and $\Delta$, we let $\delta_0(k, r, \Delta)$ be the value of $\Delta^{\prime}$ coming from Lemma \ref{lemma3.4p} or Lemma \ref{lemma3.4f}, where we take $j$ maximal satisfying \eqref{condj}.
\begin{lemma}[\cite{ford_vinogradovs_2002} Lemma 3.5]\label{lemma3.5f}
    Let $k \geq 26$ and let $\omega, \eta$ and $V$ be as in Lemma \ref{lemma3.4p} or Lemma \ref{lemma3.4f}. Let $\Delta_1=\frac{1}{2} k^2(1-1 / k)$ and for $n \geq 1$, let $r_n$ be an integer in $[4, k]$ satisfying
$$
\phi^*\left(k, r_n, \Delta_n\right):=\frac{2 k}{2 r_n k+2 \Delta_n-\left(k-r_n\right)\left(k-r_n+1\right)} \geq \frac{1}{k+1} .
$$
Then set $\Delta_{n+1}=\delta_0\left(k, r_n, \Delta_n\right)$. If $n \leq k^2$, then
$$
J_{n k, k}(P) \leq C_n P^{2 n k-k(k+1) / 2+\Delta_n} \quad(P \geq 1),
$$
where $C_1=k !$ and, for $n \geq 2$,
$$
C_n=C_{n-1} \max \left[k^{3 k} \eta^{4 k(n-1)+k^2}, V^{(k+1)\left(\Delta_{n-1}-\Delta_n\right)}\right].
$$
\end{lemma}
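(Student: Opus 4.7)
I would proceed by induction on $n$, where each inductive step is an application of Lemma \ref{lemma3.4p} (or Lemma \ref{lemma3.4f} when $k\ge 90000$) and the base case comes from a trivial counting argument. For $n=1$ one has $\Delta_1=\tfrac12 k^2(1-1/k)=\tfrac{1}{2}k(k-1)$, so the desired exponent of $P$ is $2k-k(k+1)/2+\Delta_1=k$; the standard bound $J_{k,k}(P)\le k!\,P^k$ (each tuple $(y_1,\ldots,y_k)$ determines $(x_1,\ldots,x_k)$ up to one of $k!$ permutations via Newton's identities) then gives the required inequality with $C_1=k!$.

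For the inductive step, I would assume the claim at index $n-1$ and split on the size of $P$. If $P\ge V^{k+1}$, I would invoke Lemma \ref{lemma3.4p}/\ref{lemma3.4f} with $s=(n-1)k$, $C=C_{n-1}$, $\Delta=\Delta_{n-1}$, $r=r_{n-1}$, and $j$ chosen maximally so that \eqref{condj} holds. Since $\Delta_n=\delta_0(k,r_{n-1},\Delta_{n-1})$ by definition, the conclusion reads
$$
J_{nk,k}(P)\le k^{3k}\eta^{4(n-1)k+k^2}\,C_{n-1}\,P^{2nk-k(k+1)/2+\Delta_n},
$$
and the prefactor is dominated by the first term inside the $\max$ defining $C_n/C_{n-1}$. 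If instead $1\le P<V^{k+1}$, I would use the trivial pointwise bound $|f(\boldsymbol{\alpha})|\le P$ inside \eqref{vinogradovintegral} to write $J_{nk,k}(P)\le P^{2k}J_{(n-1)k,k}(P)$, then invoke the induction hypothesis to obtain an exponent of $P$ equal to $2nk-k(k+1)/2+\Delta_{n-1}$. Assuming $\Delta_{n-1}\ge\Delta_n$, the inequality $P^{\Delta_{n-1}-\Delta_n}\le V^{(k+1)(\Delta_{n-1}-\Delta_n)}$ transfers the extra $P$-power into the constant, giving the required bound with the second term inside the $\max$.

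The main technical obstacle is verifying that the hypotheses of Lemma \ref{lemma3.4p}/\ref{lemma3.4f} propagate through the iteration. Concretely, at each step one must check that there is a valid $j$ obeying $2\le j\le 9r_n/10$ together with $(j-1)(j-2)\le 2\Delta_n-(k-r_n)(k-r_n+1)$; that all the recursively defined $\phi_J$ satisfy $\phi_J\ge 1/(k+1)$ (this is exactly what the hypothesis $\phi^*(k,r_n,\Delta_n)\ge 1/(k+1)$ delivers, since in the extremal configuration $\phi_1$ collapses to $\phi^*$); and that the sequence $(\Delta_n)$ is non-increasing, so that the second alternative in the $\max$ defining $C_n$ is at least $1$. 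Finally, the bookkeeping restriction $n\le k^2$ is precisely what keeps $s=(n-1)k$ inside the admissible range $k\le s\le k^3$ of the inductive lemmas.
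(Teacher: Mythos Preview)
The paper does not actually prove this lemma---it is quoted verbatim from Ford \cite{ford_vinogradovs_2002} and used as a black box---so there is no in-paper argument to compare against. Your proposal is essentially Ford's own proof: induction on $n$, with the base case $J_{k,k}(P)\le k!\,P^{k}$ from Newton's identities, and the inductive step split according to whether $P\ge V^{k+1}$ (apply Lemma~\ref{lemma3.4p}/\ref{lemma3.4f} with $s=(n-1)k$) or $P<V^{k+1}$ (use $J_{nk,k}(P)\le P^{2k}J_{(n-1)k,k}(P)$ and absorb $P^{\Delta_{n-1}-\Delta_n}$ into the constant). Your identification of the side conditions---existence of an admissible $j\ge 2$, the role of $\phi^{*}\ge 1/(k+1)$ in guaranteeing $\phi_J\ge 1/(k+1)$, monotonicity $\Delta_n\le\Delta_{n-1}$, and the constraint $n\le k^2$ keeping $s=(n-1)k\le k^3$---is also correct, though your parenthetical ``$\phi_1$ collapses to $\phi^*$'' is slightly imprecise: in Ford's argument one shows each $\phi_J$ is bounded below by $\phi^*$ via the recursion, not that they coincide.
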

As already mentioned before, some explicit bounds for the Vinogradov's integral were found by Tyrina \cite{tyr87}, and they give evident improvements when $s\ll k^2$. In her argument, Tyrina defines recursively sequences $r_n, s_n$, and $\Delta_{s_n}$, with $ s_1=k$ and $\Delta_{s_1}=k$. More precisely, for $s \geq 1$ we take $r_n$ to be the integer nearest to the number
$$
\frac{2 \Delta_{s_n}+k(k+1)}{ 2 k+1}$$ and we define recursively $$
\Delta_{s_{n+1}}=\Delta_{s_n}+r_n-\frac{\left(2 r_n-k-1\right)\left(2 r_n-k\right)}{2 r_n}-\frac{\Delta_{s_n}}{r_n} $$where$$
s_{n+1}=k+r_1+\cdots+r_n.
$$
Then, we consider the function $x(y),\  k \leq y<k(k+1) / 2$ :
$$
x(y)=-y+2 k-(k+1)^2 \cdot \ln \left(1-2 \cdot(y-k) /\left(k^2-k\right)\right).
$$
This function increases monotonically and assumes the values from $k$ to infinity as $y$ increases from $k$ to $k(k+1) / 2$. The function $y=y(x)$ which is inverse to $x(y)$ is also monotonically increasing. Finally, let $l_0$ denote the integer $\lceil x\left(k^2 / 2\right)\rceil$, where
$$
x\left(k^2 / 2\right)=-k^2 / 2+2 k+(k+1)^2 \ln (k-1).
$$

Tyrina proved the following result.
\begin{theorem}[\cite{tyr87} Thm. 1]\label{tyrina}
     The mean value $J_{s, k}(P)$  satisfies the estimate
$$
J_{s, k}(P) \leq D_s \cdot P^{2 s-\kappa_s}
$$
where $D_s=2^{2 s\left(k+s k^{-1}\right)} k^{k+4 s^2} s^{2(s-k)}$. Here
\begin{enumerate}
    \item if $s=s_n, n=1,2, \ldots, s_n<l_0$, then $\kappa_s \geq \Delta_s \geq y(s)$;
    \item if $s=l_t=l_0+k t, t=0,1, \ldots$, then
$$
\kappa_s \geq \Delta_s \geq \frac{k(k+1)}{2}-\frac{k+2}{2}\left(1-\frac{1}{k}\right);
$$
\item if $s_n<s<s_{n+1}$, then $\kappa_s \geq \Delta_s \geq \Delta_{s_{n+1}} \cdot s / s_{n+1}$; 
\item  if $l_{t-1}<s<l_t, t=1,2, \ldots$, then $\kappa_s \geq \Delta_s \geq \Delta_{l_t} \cdot s / l_t$.
\end{enumerate}
\end{theorem}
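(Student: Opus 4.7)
The plan is to carry out Vinogradov's efficient differencing combined with p-adic descent in the refined form Tyrina develops, optimizing the choice of increment at each iteration and then controlling the resulting discrete recursion by a continuous ODE. First I would establish the base case at $s_1 = k$: the bound $J_{k,k}(P) \le k!\, P^k$ follows by noting that only permutation solutions contribute, which gives $\kappa_{s_1} \ge k = \Delta_{s_1}$.

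Next, the main inductive step passes from $J_{s_n,k}$ to $J_{s_{n+1},k}$ with $s_{n+1} = s_n + r_n$, following the scheme underlying Lemma~\ref{lemma3.4p} specialized to the trivial polynomial system (the case $d = 0$). Unwinding the p-adic substitution and applying H\"older produces the transition inequality
\begin{equation*}
\Delta_{s_{n+1}} \ge \Delta_{s_n} + r_n - \frac{(2r_n - k - 1)(2r_n - k)}{2 r_n} - \frac{\Delta_{s_n}}{r_n},
\end{equation*}
whose right-hand side is maximized when $r_n$ is closest to $(2\Delta_{s_n} + k(k+1))/(2k+1)$, which is Tyrina's prescription. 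Simultaneously one must track the constant $D_s$: each descent multiplies by at most a factor of $P^{k}$ together with a power of the prime modulus $p \le 2M \ll s^2$, and collecting these multiplicative losses over the $n$ steps produces the claimed $D_s = 2^{2s(k + s/k)} k^{k+4s^2} s^{2(s-k)}$.

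The heart of the proof is item (1), the bound $\Delta_{s_n} \ge y(s_n)$ for $s_n < l_0$. I would treat the discrete recursion as an Euler step for the ODE obtained by substituting the continuous optimum $r \approx (k+1)/2 + \Delta/(k+1)$ into the right-hand side; after simplification the equation separates and integrates to exactly $x(y) = -y + 2k - (k+1)^2 \log(1 - 2(y-k)/(k^2-k))$ as the inverse solution. Concavity of the continuous solution in $s$ ensures the discrete sequence lies above it, provided the $O(1/k)$ discretization error is absorbed. Item (2) follows from a saturation argument: once $\Delta$ is within $O(1)$ of $k(k+1)/2$ one checks that $r_n = k$ is near-optimal, and at $r_n = k$ the gain $r_n - (2r_n - k - 1)(2r_n - k)/(2r_n)$ cancels $\Delta/r_n$ up to the deficit $(k+2)(1-1/k)/2$, accounting for $l_t = l_0 + kt$.

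Items (3) and (4) follow by interpolation. H\"older's inequality applied to the exponential sum in \eqref{vinogradovintegral} gives log-convexity of $s \mapsto \log J_{s,k}(P) - 2s\log P$, which together with the bounds already established at the endpoints $s_{n+1}$ and $l_t$ yields $\Delta_s \ge \Delta_{s_{n+1}}\cdot s / s_{n+1}$ and $\Delta_s \ge \Delta_{l_t}\cdot s / l_t$ respectively. The main obstacle I anticipate is the uniform comparison between the discrete recursion and the continuous ODE in item (1): one must verify that the loss from rounding $r_n$ to the nearest integer, together with the first-order Euler truncation error, remains uniformly small in $k$ and does not degrade the lower bound $y(s_n)$. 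This requires sharp convexity estimates for the right-hand side of the recursion at its optimum in $r$, together with careful control of the linearization error as $\Delta$ approaches the saturation value $k(k+1)/2$.
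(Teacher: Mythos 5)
This statement is quoted verbatim from Tyrina's paper (\cite{tyr87}, Theorem 1) as background; the present paper offers no proof of it, so there is no internal argument to compare yours against. Judged on its own terms, your sketch has the right overall architecture (base case $s_1=k$ via Newton's identities, a $p$-adic descent recursion, a continuous comparison function for item (1), and H\"older interpolation against the trivial bound $J_{0,k}=1$ for items (3) and (4) --- the last of these is correct and complete as you describe it). But two of your central claims are wrong as stated, and the hardest step is left as an acknowledged gap.

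First, the choice of $r_n$. Writing the recursion as $\Delta'=\Delta(1-1/r)-r+(2k+1)-\tfrac{k(k+1)}{2r}$, its right-hand side is maximized in $r$ at $r=\sqrt{(4\Delta+k(k+1))/2}$, \emph{not} at $(2\Delta+k(k+1))/(2k+1)$. Tyrina's prescription instead maximizes the gain per unit increase of $s$, i.e.\ $(\Delta'-\Delta)/r$, whose critical point is exactly $r=(2\Delta+k(k+1))/(2k+1)$; since each step costs $r_n$ additional variables ($s_{n+1}=s_n+r_n$), that is the quantity one must optimize, and your justification of the prescription does not establish this. Second, the recursion cannot be obtained by "specializing the scheme underlying Lemma~\ref{lemma3.2p}/\ref{lemma3.4p} to $d=0$": Ford's single-step formula $\Delta'=\Delta(1-\phi_1)-k+\tfrac{\phi_1}{2}(k^2+k+r^2-r)$ with $\phi_1=1/r$ gives $\Delta(1-1/r)-k+\tfrac{r-1}{2}+\tfrac{k(k+1)}{2r}$, which carries the opposite sign on the $k(k+1)/(2r)$ term; Tyrina's 1987 argument is a different (Karatsuba-style) $p$-adic construction and must be carried out from scratch. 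Finally, the two places where the theorem's specific content actually lives --- the verification that the discrete sequence $\Delta_{s_n}$ dominates $y(s_n)$ (including the rounding of $r_n$ and the fact that $x(y)$ is only an $O(1/k)$-perturbed integral of the optimal continuous rate), and the derivation of the precise constant $D_s=2^{2s(k+sk^{-1})}k^{k+4s^2}s^{2(s-k)}$ --- are exactly the points you defer, so the proposal does not yet constitute a proof.
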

New explicit bounds for $J_{s,k}$ were found by Ford in 2002 \cite{ford_vinogradovs_2002}.
\begin{theorem}[\cite{ford_vinogradovs_2002} Thm. 3]\label{th3ford}
 Let $k$ and $s$ be integers with $k \geq 1000$ and $2 k^2 \leq s \leq \frac{k^2}{2}\left(\frac{1}{2}+\log \frac{3 k}{8}\right)$. Then
$$
J_{s, k}(P) \leq k^{2.055 k^3-5.91 k^2+3 s} 1.06^{s k+2 s^2 / k-9.7278 k^3} P^{2 s-\frac{1}{2} k(k+1)+\Delta_s} \quad(P \geq 1)
$$
where
$$
\Delta_s=\frac{3}{8} k^2 e^{1 / 2-2 s / k^2+1.7 / k}.
$$
Further, if $k \geq 129$, there is an integer $s \leq \rho k^2$ such that for $P \geq 1$,
$$
J_{s, k}(P) \leq k^{\theta k^3} P^{2 s-\frac{1}{2} k(k+1)+0.001 k^2}
$$
with
\begin{equation}\label{fordrho}
    (\rho, \theta)=\left\{\begin{array}{ll}
(3.21432,2.3291) & (k \geq 200) \\ \\
(3.21734,2.3849) & (150 \leq k \leq 199) \\ \\
(3.22313,2.4183) & (129 \leq k \leq 149)
\end{array}\right. .
\end{equation}
\end{theorem}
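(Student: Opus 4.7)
The proof proceeds by iterating the efficient-congruencing scheme encoded in Lemmas \ref{lemma3.2p}--\ref{lemma3.5f}. The base case is the trivial bound $J_{k,k}(P)\le k!\,P^{2k-k(k+1)/2+\Delta_1}$ with $\Delta_1=\tfrac12 k^2(1-1/k)$, which supplies $C_1=k!$ in Lemma \ref{lemma3.5f}. Applying the inductive step roughly $n=s/k$ times produces sequences $\Delta_n,\,C_n$ with
\begin{equation*}
J_{nk,k}(P)\le C_n\,P^{2nk-k(k+1)/2+\Delta_n}.
\end{equation*}
At each step one selects $r_n\in[4,k]$ (and auxiliary $j$) near-optimally so as to minimize $\delta_0(k,r_n,\Delta_n)$ subject to $\phi^*(k,r_n,\Delta_n)\ge 1/(k+1)$, and chooses $\omega$ (hence $\eta=1+\omega$ and $V$) so as to balance the two factors inside the $\max$ defining $C_n$.

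\textbf{Decay analysis.} The heart of the argument is to show that, with the right $r_n$, the sequence obeys an approximately multiplicative recursion $\Delta_{n+1}\approx e^{-2/k}\Delta_n$, so that after $n$ iterations
\begin{equation*}
\Delta_n\le \tfrac{3}{8}k^2\,\exp\!\left(\tfrac12-\tfrac{2s}{k^2}+\tfrac{1.7}{k}\right),
\end{equation*}
matching the $\Delta_s$ claimed in the theorem. I would verify this by comparing the discrete recursion $\Delta_{n+1}=\Delta_n(1-\phi_1)-k+\tfrac12\phi_1(k^2+k+r_n^2-r_n)$ from Lemma \ref{lemma3.4p}/\ref{lemma3.4f} with the associated continuous ODE $\Delta'(n)\approx -2\Delta/k-k+\mathrm{l.o.t.}$, whose solution is of exactly the exponential form above. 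The $1.7/k$ correction absorbs discretization error and the effect of rounding $r_n$ to an integer. The range $2k^2\le s\le\tfrac{k^2}{2}(\tfrac12+\log\tfrac{3k}{8})$ ensures that $\Delta_n$ is well inside the asymptotic-decay regime yet still large enough that $\phi^*\ge 1/(k+1)$ remains valid throughout.

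\textbf{Tracking constants and second part.} The factor $C_n$ accumulates as a product of $k^{3k}$, $\eta^{4k(n-1)+k^2}$, and $V^{(k+1)(\Delta_{n-1}-\Delta_n)}$. Telescoping, together with $\sum_n(\Delta_{n-1}-\Delta_n)\le\Delta_1$ and a carefully chosen $\omega$ satisfying $1+\omega\le 1.06$ and $V^{k+1}\le k^{O(k)}$, yields the explicit constants $k^{2.055k^3-5.91k^2+3s}$ and $1.06^{sk+2s^2/k-9.7278k^3}$. For the second assertion, one stops the iteration at the smallest $n$ with $\Delta_n\le 0.001 k^2$; by the exponential decay this happens near $n\sim \tfrac{k}{2}\log k$, giving an integer $s=nk\le\rho k^2$ with the $\rho,\theta$ of \eqref{fordrho}; smaller $k$ require more iterations, which explains the slightly larger $\rho$ there. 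The main obstacle is the precise numerical bookkeeping: one must jointly optimize $\omega$ and the choices of $r_n$ at each of the $\sim k$ iterations, since even a tiny slack propagates multiplicatively and visibly degrades the explicit constants in $A$ and $B$.
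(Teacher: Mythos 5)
Theorem \ref{th3ford} is quoted from Ford's paper and is not proved here, so the comparison is really against Ford's argument, of which the closest analogue in this paper is the proof of Theorem \ref{newboundsvino} in Section 3. Your outline does capture that strategy: iterate Lemma \ref{lemma3.5f} from $\Delta_1=\tfrac12 k^2(1-1/k)$, $C_1=k!$, with $r_n$ chosen near $\sqrt{k^2+k-2\Delta_n}$; establish relative decay of $\delta_n=\Delta_n/k^2$ at rate essentially $2/k$ per step (Ford makes this rigorous not by an ODE comparison but via the monotone invariant $\delta+\log\delta+\log(2-\delta)$, which drops by at least $\beta+0.4\beta^2\ge 2/k$ per iteration); telescope $V^{(k+1)(\Delta_{n-1}-\Delta_n)}$ against $\sum(\Delta_{n-1}-\Delta_n)\le\Delta_1\le k^2/2$ with $\omega=0.06$ to get the $k^{2.055k^3}$ and $1.06^{\cdots}$ factors; and compute numerically for $129\le k<1000$.

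There are, however, two concrete problems. First, your claim that $\Delta_n\le 0.001k^2$ first occurs near $n\sim\tfrac{k}{2}\log k$ contradicts the conclusion you draw from it: since $\Delta_n\approx\tfrac38k^2e^{1/2-2n/k}$, the threshold $0.001k^2$ is reached at $2n/k\ge\tfrac12+\log 375$, i.e.\ $n\approx 3.21k$, so $s=nk\approx 3.21k^2$ --- which is exactly where the constant $\rho$ in \eqref{fordrho} comes from. If $n$ really were of order $k\log k$, then $s$ would be of order $k^2\log k$ and no constant $\rho$ could work; the quantity $\tfrac{k}{2}\log k$ is instead roughly the upper endpoint of the admissible range of $n$ in the first assertion. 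Second, the first assertion holds for every integer $s$ in the stated range, whereas the iteration only produces bounds at $s=nk$; you omit the H\"older interpolation step (used after Lemma \ref{lemma36new} in Section 3) that handles $s\not\equiv 0\ (\mathrm{mod}\ k)$ and is what turns the exponent $-2n/k$ into the stated $-2s/k^2$ in $\Delta_s$. Both points are repairable, but as written the sketch does not account for the stated form of the theorem, and of course all the explicit constants rest on the numerical bookkeeping you defer.
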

In 2011, Preobrazhenski\u{\i} \cite{preobrazhenskii_new_2011} improved the value of  $\rho$ in Theorem \ref{th3ford} when $k\ge 16000$ to $\rho=3.213303$. Some further explicit bounds for the Vinogradov's integral were found also by Steiner \cite{steiner_new_2019} using Wooley's efficient congruencing method \cite{wooley2012vinogradov, wooley2013, Wooley2017NestedEC}. He proved that for every $k\ge 3$, $s\ge \frac{5}{2}k^2+k$, and $P\ge s^{10}$, we have $J_{s,k}(P)\le CP^{2s-\frac{1}{2}k(k+1)}$, where $C$ is roughly $k^{k^{O(\log(k)/\log(\lambda))}}$. However, although the exponent of $P$ is the optimal one, the constant $C$ is far too large compared to that one found by Ford in Theorem \ref{th3ford}, which is of order $k^{O(k^3)}$. In order to have a reasonable estimate for $J_{s,k}$, one should find a constant that is at most of order $k^{O(k^4)}$, since otherwise, after having taken the $k^4-$th root (this passage is required by the argument in Section \ref{sect4}, where the constant appearing in the upper bound for the Vinogradov integral will be elevated to the power of $1/(2rs)$, with both $r$ and $s$ of order $O(k^2)$), the constant cannot be controlled. Hence, as already Steiner pointed out in his paper (\cite{steiner_new_2019}, p. 359), the estimate he found for $J_{s,k}$ cannot be applied to Ford's argument.\\

In \cite{ford_vinogradovs_2002}, Ford found also an upper bound for the incomplete systems defined by \eqref{defincomplete}.
\begin{theorem}[\cite{ford_vinogradovs_2002} Thm.4]\label{th4ford}
Let $k \ge 60$, $h \in [0.9k, k - 2]$, $s \in [2t, \lfloor h/2\rfloor t]$ and $P \ge \exp(Dk^2)$ for some $D \ge 10$. Assume that 
\[
\eta \in \left(2k^{-3}, (2k)^{-1}\right],\qquad \frac{4\log k}{Dk^2\eta} \in \left[18k^{-1}, 0.4\right].
\]
Then 
\[
J_{s, k, h}(\mathcal{C}(P, P^\eta)) \le A P^{E}
\]
where
\[
A := \exp\left(\frac{s^2}{t} + \frac{10.5t\log^2 k}{Dk\eta^2} + s\left(\left(\eta^{-1} + h\right)\left(1 - h^{-1}\right)^{s/t} - h\right)\log(10\eta)\right),
\]
\[
E := 2s - \frac{t(h + k)}{2} + \frac{t(t - 1)}{2} + \frac{\eta s^2}{2t} + ht\exp\left(-\frac{s}{ht}\right).
\]
\end{theorem}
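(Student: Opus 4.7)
The plan is to follow the smooth-Weyl-sum / efficient-differencing iteration introduced by Wooley for complete Vinogradov systems and adapted to the incomplete setting. The key structural input is that every element of $\mathcal{C}(P, P^\eta)$ factors as $x = py$ with $p$ a prime in $(P^{\eta/2}, P^\eta]$ and $y$ lying in a smaller smooth set; this factorisation is what sustains the iterative reduction of the Weyl sum.

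First, I would set up $f(\boldsymbol{\alpha}) = \sum_{x \in \mathcal{C}(P, P^\eta)} e(\alpha_h x^h + \cdots + \alpha_k x^k)$, so that $J_{s,k,h}(\mathcal{C}(P, P^\eta))$ counts solutions of the $t = k - h + 1$ active equations. At each iterative step, I would isolate a distinguished prime $p$ dividing one of the variables and rewrite the equations $\sum_i x_i^j = \sum_i y_i^j$ (for $h \le j \le k$) as congruences modulo $p^j$. Orthogonality in these congruences, summed over admissible primes, yields the main exponent saving $t(h+k)/2 = \sum_{j=h}^{k} j$ appearing with a minus sign in $E$, while Hölder's inequality converts the remaining integral to a new mean value over a smaller smooth set, setting up the next iteration.

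After roughly $s/t$ iterations, the residual terms in $E$ arise as compounded losses: the $t(t-1)/2$ correction from the triangular algebra of inverting the congruence system, the $\eta s^2/(2t)$ term from the geometric accumulation of the step-by-step shrinkage of $P$, and the $ht\exp(-s/(ht))$ term from the trivial bound applied when the iteration is truncated at the last step. The multiplicative constant $A$ collects the Hölder cost $\exp(s^2/t)$, the prime-counting cost $\exp(10.5\, t\log^2 k /(Dk\eta^2))$ obtained by applying the prime number theorem to the interval $(P^{\eta/2}, P^\eta]$ under the hypothesis $P \ge \exp(Dk^2)$, and the compound loss $s((\eta^{-1}+h)(1-h^{-1})^{s/t} - h)\log(10\eta)$ from variables lacking a usable large prime factor at each step.

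The main obstacle will be the bookkeeping: ensuring that the per-step orthogonality gain strictly dominates the per-step Hölder and prime-counting losses uniformly across the entire $(s/t)$-step iteration, while producing genuinely explicit constants rather than absorbed $O(\cdot)$ terms. The hypotheses $\eta \in (2k^{-3}, (2k)^{-1}]$ and $4\log k/(Dk^2\eta) \in [18k^{-1}, 0.4]$ are precisely what is needed to keep the recursion inside the regime where the orthogonality, Hölder, and prime-counting estimates combine cleanly, so that the final constants emerge in the stated explicit form.
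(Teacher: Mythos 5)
This statement is Theorem 4 of Ford \cite{ford_vinogradovs_2002}, quoted verbatim as a background input; the present paper does not prove it, so the only meaningful comparison is with Ford's original argument. Your outline does track that argument in its broad strokes: the factorisation $x=py$ available to elements of $\mathscr{C}(P,P^{\eta})$ (all prime factors lie in $(P^{\eta/2},P^{\eta}]$), the treatment of the $t=k-h+1$ active equations by extracting a block of $t$ variables and exploiting the Vandermonde/translation--dilation structure to win the full $P^{\sum_{j=h}^{k}j}=P^{t(h+k)/2}$, the iteration in roughly $s/t$ rounds with H\"older's inequality passing to a mean value over a smaller smooth set, and the truncation term $ht\exp(-s/(ht))$ at the end. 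So the strategy is the right one and is essentially Ford's.

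The difficulty is that, as written, your proposal proves nothing: every quantitative ingredient of the theorem --- the terms $t(t-1)/2$, $\eta s^{2}/(2t)$, $10.5\,t\log^{2}k/(Dk\eta^{2})$, and especially the delicate factor $s\bigl(\left(\eta^{-1}+h\right)\left(1-h^{-1}\right)^{s/t}-h\bigr)\log(10\eta)$ --- is \emph{attributed} to a source rather than \emph{derived}, and for a fully explicit statement these constants are the entire content. You acknowledge this under the heading of ``bookkeeping,'' but it is more than bookkeeping: one must set up the precise recursive inequality (Ford's Lemmas 4.1--4.3), verify at each step that the hypotheses $\eta\in(2k^{-3},(2k)^{-1}]$ and $4\log k/(Dk^{2}\eta)\in[18k^{-1},0.4]$ keep the shrinking parameter and the prime counts in range, and then solve the recursion exactly to see that the losses compound to precisely the displayed $A$ and $E$. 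One smaller point to be careful about: the saving $t(h+k)/2$ is harvested once, from a single block of $t$ distinguished variables, not accumulated per iteration as your phrasing (``yields the main exponent saving \dots setting up the next iteration'') suggests; the per-iteration gains and losses are of a different shape, and conflating the two would make the recursion come out wrong. To turn this into a proof you would need to reproduce Ford's Section 4 in full, or simply cite it, as the paper under review does.
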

Finally, we recall a few results found in \cite{ford_vinogradovs_2002} involving both $S(N,t)$ defined in \eqref{Snt} and some bounds for both $\zeta(s)$ and $\zeta(s,u)$ that we will use to prove Theorem \ref{theorem1} and Theorem \ref{theorem2} respectively.
\begin{lemma}[\cite{ford_vinogradovs_2002} Lemma 5.1]\label{lemma5.1ford}
Suppose $k, r$ and $s$ are integers $\geq 2$, and $h$ and $g$ are integers satisfying $1 \leq h \leq g \leq k$. Let $N$ be a positive integer, and $M_1, M_2$ be real numbers with $1 \leq M_i \leq N$. Let $\mathscr{B}$ be a nonempty subset of the positive integers $\leq M_2$. Then
\begin{align*}
    &S(N,t)\leq 2M_1 M_2+\frac{t(M_1 M_2)^{k+1}}{(k+1)N^k}\\&+N\left(\frac{M_2}{|\mathscr{B}|}\right)^{1/r}\left((5r)^kM_2^{-2s}\lfloor M_1\rfloor ^{-2r+k(k+1)/2}J_{r,k}(\lfloor M_1\rfloor)J_{s,g,h}(\mathscr{B})W_h\dots W_g\right)^{1/2rs},
\end{align*}
    where
$$
W_j=\min \left(2 s M_2^j, \frac{2 s M_2^j}{r\left\lfloor M_1\right\rfloor^j}+\frac{s t M_2^j}{\pi j N^j}+\frac{4 \pi j(2 N)^j}{r t\left\lfloor M_1\right\rfloor^j}+2\right) \quad(j \geq 1) .
$$
\end{lemma}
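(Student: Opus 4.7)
The plan is to follow the Vinogradov--Karatsuba method for exponential sums, as refined by Ford. I write each summand as $(n+u)^{-it}=\exp(-it\log(n+u))$. For each $m$ with $1\le m\le \lfloor M_1\rfloor$ and each $b\in\mathscr{B}$, I shift the summation index $n\mapsto n+mb$, so that the original sum equals, up to boundary terms of total size at most $2M_1M_2$ (producing the first error term), the averaged sum $\frac{1}{\lfloor M_1\rfloor|\mathscr{B}|}\sum_{m,b}\sum_n(n+mb+u)^{-it}$, where $n$ now ranges over a translate of $(N,R]$.

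Next, I Taylor-expand $\log(n+mb+u)=\log n+\sum_{j=1}^{k}\frac{(-1)^{j-1}}{j}\bigl(\frac{mb+u}{n}\bigr)^{j}+R_k$, with $|tR_k|\le t(M_1M_2+1)^{k+1}/((k+1)N^{k+1})$. Summing the resulting tail over $n\le 2N$ produces the second error term $t(M_1M_2)^{k+1}/((k+1)N^{k})$. After these reductions the problem is reduced to bounding a sum of the form $\frac{N}{\lfloor M_1\rfloor|\mathscr{B}|}\sum_{m,b}\sum_{n}n^{-it}e(\Phi(m,b;n))$, where $\Phi$ is a polynomial of degree $k$ in both $m$ and $b$ whose coefficients depend on $n,t,u$ and arise from the binomial expansion of each $(mb+u)^{j}/n^{j}$.

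The core estimate follows by raising the inner double sum over $(m,b)$ to the $2rs$-th power and applying H\"older's inequality to separate the $m$-variables from the $b$-variables. The $m$-average of order $2r$, via Vinogradov's device of expressing indicator functions of level sets by truncated Fourier series (which is the source of the $(5r)^{k}$ factor), reduces to $\lfloor M_1\rfloor^{-2r+k(k+1)/2}J_{r,k}(\lfloor M_1\rfloor)$. The $b$-average of order $2s$ is taken only over the powers $j=h,\dots,g$ that interact nontrivially with the $n$-sum, and yields $M_2^{-2s}J_{s,g,h}(\mathscr{B})$; the density correction $(M_2/|\mathscr{B}|)^{1/r}$ arises from the mismatch between $|\mathscr{B}|$ and $M_2$ in the H\"older step. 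For each $j\in[h,g]$, the auxiliary integration $\int_{0}^{1}d\alpha_j$ over the $n$-dependence produces the factor $W_j$: the trivial bound $2sM_2^{j}$ is always available, while the refined term in the $\min$ comes from estimating $\sum_{n}e(\alpha_j\cdot\text{coeff}\cdot n^{-j})$ via van der Corput's first-derivative test on the relevant phase.

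The hard part is the explicit bookkeeping after the binomial expansion of $(mb+u)^{j}$: one must verify that the powers of $m$ that occur fill out the full range $1,\dots,k$ (so that the complete Vinogradov integral $J_{r,k}$ is what appears) while the powers of $b$ that survive the integration in the auxiliary variables form exactly the block $h,\dots,g$ (so that the incomplete system $J_{s,g,h}(\mathscr{B})$ appears). Tracking the exponents of $M_1$, $M_2$ and $|\mathscr{B}|$ to obtain the stated normalization $\lfloor M_1\rfloor^{-2r+k(k+1)/2}M_2^{-2s}$ and the density factor $(M_2/|\mathscr{B}|)^{1/r}$ requires care but introduces no new ideas beyond the classical Vinogradov method combined with the incomplete-system technology of Ford.
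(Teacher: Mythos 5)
The paper does not actually prove this lemma: it is quoted verbatim from Ford (Lemma 5.1 of the cited paper), and only the preliminary reductions are reproduced later, in the proof of Lemma \ref{lemma1} of Section 8 (shift $n\mapsto n+ab$ with $a\le M_1$, $b\in\mathscr{B}$, giving the boundary term $2M_1M_2$; bound $\sum_n|\cdots|\le N\max_{N\le z\le 2N}|\cdots|$; Taylor-expand $\log(1+ab/z)$ with $z=n+u$ so that the phase becomes the degree-$k$ polynomial $\gamma_1(ab)+\cdots+\gamma_k(ab)^k$ with $\gamma_j=(-1)^jt/(2\pi jz^j)$ and the truncation error is exactly $t(M_1M_2)^{k+1}/((k+1)N^k)$). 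Your skeleton --- shift, expand, double H\"older separating the two averaging variables, complete system $J_{r,k}$ in $a$ and incomplete system $J_{s,g,h}(\mathscr{B})$ in $b$ --- is the right architecture. Note, however, that your expansion point is suboptimal: expanding with increment $mb+u$ about $n$ rather than increment $ab$ about $n+u$ yields $(M_1M_2+1)^{k+1}$ in the error term instead of $(M_1M_2)^{k+1}$, and buries $u$ in the polynomial coefficients; it does not reproduce the stated second error term.

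The genuine gap is in your account of the core estimate. First, the claim that ``the powers of $b$ that survive the integration in the auxiliary variables form exactly the block $h,\dots,g$'' is not what happens: since the phase is a polynomial in the product $ab$, every degree $1\le j\le k$ of $b$ occurs, always paired with the same degree of $a$; the restriction to $h\le j\le g$ is a \emph{choice} made in the counting step (conditions outside that block are simply not exploited), not a consequence of the expansion. Second, and more seriously, your mechanism for $W_j$ is wrong. By the time the $W_j$ appear, $z$ has already been fixed by the maximum over $n$, so there is no remaining $n$-sum and no ``auxiliary integration $\int_0^1 d\alpha_j$'' to perform, and van der Corput's first-derivative test plays no role. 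The $W_j$ arise from summing the geometric series $\sum_{\mu}e(\gamma_j\mu\nu)$ over the integer values $\mu$ of $\sum_i(a_i^j-a_i'^j)$ (range $|\mu|\le r\lfloor M_1\rfloor^j$) and then over the values $\nu$ of $\sum_i(b_i^j-b_i'^j)$ (range $|\nu|\le sM_2^j$), which is where the explicit $t/(\pi jN^j)$ and $(2N)^j/t$ terms, i.e.\ $|\gamma_j|$ and $|\gamma_j|^{-1}$, come from. Since you also defer the entire quantitative bookkeeping that produces $(5r)^k$, $\lfloor M_1\rfloor^{-2r+k(k+1)/2}$, $(M_2/|\mathscr{B}|)^{1/r}$ and the exact form of $W_j$ --- which is the whole content of the lemma --- the proposal as written does not establish the statement; you would need to either carry out Ford's computation or, as the paper does, cite it.
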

\begin{lemma}[\cite{ford_vinogradovs_2002}, Lemma 7.1]\label{lemma7.1}
Suppose $\frac{1}{2} \leq \sigma \leq 1,0<u \leq 1, t \geq 3$ and $s=\sigma+i t$. If either $\sigma \leq \frac{15}{16}$ or $t \leq 10^{100}$, then
$$
|\zeta(s)|,\left|\zeta(s, u)-u^{-s}\right| \leq 58.1 t^{4(1-\sigma)^{3 / 2}} \log ^{2 / 3} t.
$$
\end{lemma}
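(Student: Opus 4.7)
The plan is to handle the two sufficient conditions separately, since in each the claimed bound is weaker than what is accessible by standard explicit convexity/subconvexity methods, and the full nonlinear exponent $(1-\sigma)^{3/2}$ only becomes a real constraint in the remaining hard range $\sigma\in(15/16,1]$, $t>10^{100}$ treated elsewhere in the paper.

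For the case $\sigma\le 15/16$, one has $(1-\sigma)^{3/2}\ge(1/16)^{3/2}=1/64$, so the right-hand side is at least $58.1\,t^{1/16}\log^{2/3}t$. Starting from an explicit bound on the critical line of the form $|\zeta(1/2+it)|\le c_0 t^{\theta}\log t$ (with $\theta\le 1/6$ by van der Corput, or sharper explicit values from Hiary or Platt--Trudgian) and the standard explicit $|\zeta(1+it)|\le c_1\log t$, one interpolates by the Hadamard three-lines principle to obtain $|\zeta(\sigma+it)|\le c_2\,t^{2\theta(1-\sigma)}\log t$ uniformly on $1/2\le\sigma\le 1$. For $\sigma\le 15/16$, the inequality $2\theta(1-\sigma)\le 4(1-\sigma)^{3/2}$ reduces to $(1-\sigma)^{1/2}\ge\theta/2$, which holds since $(1-\sigma)^{1/2}\ge 1/4>\theta/2$. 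A final numerical comparison of $c_2\log t$ with $58.1\log^{2/3}t$ for $t\ge 3$ closes this case.

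For the case $t\le 10^{100}$, we have $\log t\le 100\log 10<230.3$, so $\log^{2/3}t\le 37.4$, while $t^{4(1-\sigma)^{3/2}}\ge 1$. Direct explicit pointwise bounds for $|\zeta(\sigma+it)|$ in the critical strip (rigorous numerical computations of Platt for moderate heights, combined with explicit absolute bounds of Cheng or Trudgian in the transition to larger $t$) yield an upper bound readily dominated by $58.1\,t^{4(1-\sigma)^{3/2}}\log^{2/3}t\ge 58.1\log^{2/3}3>62$ throughout this range. For the Hurwitz analogue, writing $\zeta(s,u)-u^{-s}=\sum_{n\ge 1}(n+u)^{-s}$ and applying Euler--Maclaurin summation (or an approximate functional equation for the Hurwitz zeta) reduces the problem to essentially the same estimates as for $\zeta(s)$, with implied constants of comparable size, so the same numerical comparison goes through.

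The main obstacle is making the explicit constant $58.1$ tight. Each convexity and subconvexity input carries its own implied constant, and the Hurwitz adjustment adds a further layer of bookkeeping through the tail and error terms in Euler--Maclaurin. Tracking these constants through the whole chain, optimizing the choice of $\theta$ and the interpolation line, and confirming that their product stays below $58.1$ uniformly is the delicate step; the structural content of the proof, by contrast, is standard once the case split is made.
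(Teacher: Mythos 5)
This lemma is imported verbatim from Ford (2002, Lemma~7.1), and its proof there is elementary rather than subconvex: partial summation/Euler--Maclaurin gives $\left|\zeta(s,u)-u^{-s}\right|\le (t+3/2)^{1-\sigma}\bigl(1+1/t+\log(2t+1)\bigr)$ uniformly in the strip (the present paper re-uses exactly this display in its Section~5), and the two cases of the lemma are then closed by comparing exponents of $t$, not by numerics or three-lines interpolation. Measured against that, your proposal has two genuine gaps. First, in the case $\sigma\le 15/16$ your closing step --- ``a final numerical comparison of $c_2\log t$ with $58.1\log^{2/3}t$ for $t\ge 3$'' --- cannot work: $c_2\log t\le 58.1\log^{2/3}t$ fails for all sufficiently large $t$, whatever $c_2$ is. The surplus factor $\log^{1/3}t$ must be absorbed into the \emph{strict} exponent gap $4(1-\sigma)^{3/2}-2\theta(1-\sigma)\ge \tfrac{1}{16}(1-2\theta)$, by bounding $t^{-(1-2\theta)/16}\log^{1/3}t$ by its maximum over $t$; without that step the case is not closed. (You would also need to justify the three-lines argument around the pole at $s=1$, and supply an explicit half-line bound for the Hurwitz function uniform in $u$, neither of which is off-the-shelf.)

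Second, and more seriously, in the case $t\le 10^{100}$ the assertion that pointwise bounds are ``readily dominated by $58.1\,t^{4(1-\sigma)^{3/2}}\log^{2/3}t\ge 58.1\log^{2/3}3>62$'' is wrong: numerically $58.1\log^{2/3}3\approx 61.9<62$, and conceptually $|\zeta(\sigma+it)|$ is nowhere near bounded by an absolute constant of that size throughout the critical strip up to height $10^{100}$ (already $|\zeta(1/2+it)|$ exceeds it for many $t$, and Platt-type verified computations reach heights around $10^{10}$, not $10^{100}$). The inequality in this range must exploit the factor $t^{4(1-\sigma)^{3/2}}$, and the constant $58.1$ is essentially extremal: from the elementary bound one gets $1.123\,t^{1-\sigma}\log t$, writes $t^{1-\sigma}=t^{4(1-\sigma)^{3/2}}\cdot t^{1-\sigma-4(1-\sigma)^{3/2}}$, uses $\max_{\sigma}\bigl(1-\sigma-4(1-\sigma)^{3/2}\bigr)=1/108$ (attained at $1-\sigma=1/36$), and then $1.123\cdot 10^{100/108}\cdot(100\log 10)^{1/3}\approx 58.05$. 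Your argument offers no route to this constant; indeed your own closing paragraph defers exactly the step on which the lemma turns. The structural case split is right, but as written neither case is actually proved.
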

\begin{lemma}[\cite{ford_vinogradovs_2002}, Lemma 7.2]\label{lemma7.2}
If $s=\sigma+i t, \frac{15}{16} \leq \sigma \leq 1, t \geq 10^{100}$ and $0<u \leq 1$, then
$$
\left|\zeta(s, u)-\sum_{0 \leq n \leq t}(n+u)^{-s}\right| \leq 10^{-80}.
$$
\end{lemma}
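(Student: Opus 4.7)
Set $N := \lfloor t \rfloor$ and $a := N+1+u$, so that $t < a \le t+2$. For $\Re s > 1$ the defining Dirichlet series gives
\[
\zeta(s,u) - \sum_{0 \le n \le N}(n+u)^{-s} = \sum_{m \ge 0}(m+a)^{-s} = \zeta(s,a),
\]
and both sides continue analytically in $s$, so the identity persists for all $s \ne 1$. The task therefore reduces to proving $|\zeta(s,a)| \le 10^{-80}$ under the stated hypotheses. This reduction is the key structural step: it replaces an awkward tail of a series by a single Hurwitz zeta value with argument slightly larger than $t$, which is an object for which closed-form integral representations are available.

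I would then apply Hermite's formula
\[
\zeta(s,a) = \frac{a^{-s}}{2} + \frac{a^{1-s}}{s-1} + 2\int_0^\infty \frac{\sin\bigl(s\,\arctan(y/a)\bigr)}{(a^2+y^2)^{s/2}\,(e^{2\pi y}-1)}\,dy,
\]
valid on $\Re a > 0$, $s \ne 1$, and estimate each of the three pieces. Since $|s-1| \ge t$, $a^{-\sigma} \le t^{-15/16}$, and $a^{1-\sigma} \le (t+2)^{1/16} \le 2\,t^{1/16}$, the two explicit terms contribute at most $\tfrac{1}{2}t^{-15/16} + 2\,t^{-15/16} \le 3\,t^{-15/16}$. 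For the integral $I$, I use the standard inequality $|\sin z|\le \sinh|z|$ (from term-wise comparison of the two power series) together with $(a^2+y^2)^{\sigma/2}\ge a^\sigma$, which bounds $|I|$ by $(2/a^\sigma)\int_0^\infty \sinh(|s|y/a)/(e^{2\pi y}-1)\,dy$. Expanding $1/(e^{2\pi y}-1)=\sum_{n\ge 1}e^{-2\pi n y}$ and integrating term by term yields
\[
\int_0^\infty \frac{\sinh(|s|y/a)}{e^{2\pi y}-1}\,dy = \sum_{n \ge 1} \frac{|s|/a}{(2\pi n)^2 - (|s|/a)^2},
\]
which (using $|s|/a \le (t+1)/t < 2$) is bounded by an absolute multiple of $|s|/a$. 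Hence $|I|$ is also $O(t^{-15/16})$.

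Combining the three contributions gives $|\zeta(s,a)| \le C\,t^{-15/16}$ for some small absolute constant $C$, and for $t \ge 10^{100}$ with $\sigma \ge 15/16$ this is at most $C\cdot 10^{-93.75}$, comfortably below $10^{-80}$. The main (minor) obstacle is controlling the integrand near $y=0$, where $1/(e^{2\pi y}-1)$ has a simple pole; but $\sinh(|s|y/a)$ vanishes linearly there, so the integrand stays bounded and no case-splitting is needed. Since the $t^{-15/16}$ decay beats $10^{-80}$ by more than a dozen orders of magnitude for $t \ge 10^{100}$, no constants need to be optimised — the exponent $15/16$ coming from the hypothesis $\sigma \ge 15/16$ is precisely what provides the decisive polynomial saving.
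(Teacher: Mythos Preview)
Your argument is correct. The paper itself does not prove this lemma; it is quoted verbatim from Ford \cite{ford_vinogradovs_2002}, and later (Lemma~\ref{lemma7.2new}) the author simply remarks that the same proof works with $t\ge 10^{90}$ in place of $t\ge 10^{100}$. So there is no ``paper's own proof'' to compare against beyond the pointer to Ford.

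That said, your route via Hermite's (Abel--Plana) formula is a perfectly valid and self-contained way to obtain the bound. Ford's original argument uses the Euler--Maclaurin expansion for $\zeta(s,u)$ truncated at $n=\lfloor t\rfloor$; the resulting boundary term $a^{1-s}/(s-1)$ and integral remainder are of exactly the same size $O(t^{-15/16})$ as the three pieces you isolate. The two approaches are essentially dual to one another: Euler--Maclaurin replaces the tail by a real integral with a sawtooth kernel, while Hermite replaces it by a contour-shifted integral with an exponential kernel. Your version has the mild advantage that the convergence check for the integral (via $|s|/a<2<2\pi$) is particularly clean, and the enormous slack ($t^{-15/16}\le 10^{-93.75}$ against a target of $10^{-80}$) means the unoptimised constants cause no trouble. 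Either method immediately adapts to the variant with $t\ge 10^{90}$ used later in the paper.
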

\begin{lemma}[\cite{ford_vinogradovs_2002}, Lemma 7.3]\label{lemma73}
Suppose that $S(N, t) \leq C N^{1-1 /\left(D \lambda^2\right)}(1 \leq N \leq t)$ for positive constants $C$ and $D$, where $\lambda=\frac{\log t}{\log N}$. Let $B=\frac{2}{9} \sqrt{3 D}$. Then, for $\frac{15}{16} \leq \sigma \leq 1, t \geq 10^{100}$ and $0<u \leq 1$, we have
\begin{equation*}
    \begin{aligned}
        |\zeta(s)| &\leq\left(\frac{C+1+10^{-80}}{\log ^{2 / 3} t}+1.569 C D^{1 / 3}\right) t^{B(1-\sigma)^{3 / 2}} \log ^{2 / 3} t \\
        \left|\zeta(s, u)-u^{-s}\right| &\leq\left(\frac{C+1+10^{-80}}{\log ^{2 / 3} t}+1.569 C D^{1 / 3}\right) t^{B(1-\sigma)^{3 / 2}} \log ^{2 / 3} t .
    \end{aligned}
\end{equation*}
\end{lemma}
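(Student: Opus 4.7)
The plan is to reduce the bound on $\zeta(s,u)-u^{-s}$ (and on $\zeta(s)$, by taking $u=1$ together with the trivial extraction $|\zeta(s)|\le 1+|\zeta(s)-1|$) to a dyadic summation whose blocks are controlled by the hypothesis $S(N,t)\le CN^{1-1/(D\lambda^2)}$. The starting point is Lemma~\ref{lemma7.2}, which replaces the Hurwitz series by a truncated sum:
\[
\zeta(s,u)-u^{-s}=\sum_{1\le n\le t}(n+u)^{-s}+E,\qquad |E|\le 10^{-80}.
\]
The remainder $E$ and the extracted ``$1$'' in the $\zeta(s)$ reduction will feed into the additive piece of the stated inequality.

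First I would decompose $\sum_{1\le n\le t}(n+u)^{-s}$ into dyadic ranges $N_j<n\le R_j$ with $N_j=2^{j-1}$ and $R_j=\min(2N_j,t)$, and on each block apply Abel summation splitting $(n+u)^{-s}=(n+u)^{-\sigma}\cdot (n+u)^{-it}$. The partial sums of the oscillating factor are bounded by $S(N_j,t)$, and the resulting integrals telescope to give
\[
\Bigl|\sum_{N_j<n\le R_j}(n+u)^{-s}\Bigr|\le S(N_j,t)\,(N_j+u)^{-\sigma}\le C\,N_j^{1-\sigma-1/(D\lambda_j^2)},
\]
where $\lambda_j=\log t/\log N_j$.

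Next I would analyze the sum over $j$ by a saddle-point calculation. Setting $y=\log N$, the exponent $f(y)=(1-\sigma)y-y^3/(D\log^2 t)$ is concave, maximized at $y_\ast=\log t\sqrt{D(1-\sigma)/3}$ with value $B(1-\sigma)^{3/2}\log t$; this is precisely what produces $B=\tfrac29\sqrt{3D}$. Comparing the dyadic sum with the integral $(\log 2)^{-1}\int_0^\infty e^{f(y)}dy$ and substituting $u=y(D\log^2 t)^{-1/3}$ reduces the problem to a uniform integral bound
\[
\int_0^\infty e^{\alpha u-u^3}\,du\le c_0\,e^{(2/(3\sqrt{3}))\alpha^{3/2}}\qquad (\alpha\ge 0),
\]
with $\alpha=(1-\sigma)(D\log^2 t)^{1/3}$. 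The extremum is attained at $\alpha=0$, where the integral equals $\Gamma(4/3)$; after multiplying by $(\log 2)^{-1}$ and by $C$ this produces the main term $1.569\,CD^{1/3}t^{B(1-\sigma)^{3/2}}\log^{2/3}t$ provided $c_0/\log 2\le 1.569$.

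The main obstacle is this uniform integral estimate: it must be sharp enough to absorb all accumulated losses (the passage from the dyadic sum to the integral, the discrepancy between $N_j^{-\sigma}$ and $(N_j+u)^{-\sigma}$, and the contribution of the endpoint block near $n=t$) into the constant $1.569$, rather than the naive $2\Gamma(4/3)/\log 2\approx 2.58$ that a crude comparison would yield. Once that is secured, the leftover trivial-bound contribution from the smallest $n$, the ``$1$'' from $|\zeta(s)|\le 1+|\zeta(s)-1|$, and the error $|E|\le 10^{-80}$ collect into the additive summand $(C+1+10^{-80})/\log^{2/3}t\cdot t^{B(1-\sigma)^{3/2}}\log^{2/3}t$, completing the proof.
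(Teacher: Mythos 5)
Your outline follows the same route as Ford's proof of this lemma (which the paper reproduces, in a slightly generalized form, as the proof of Lemma~\ref{finallemma}): truncate via Lemma~\ref{lemma7.2}, decompose dyadically, bound each block by $N_j^{-\sigma}S(N_j,t)$, and compare the resulting unimodal sum with an integral reduced by substitution to a one-parameter family. However, there is a concrete error at the decisive step. The function
\[
\alpha \longmapsto e^{-\frac{2}{3\sqrt3}\alpha^{3/2}}\int_0^\infty e^{\alpha u-u^3}\,du
\]
is \emph{not} maximized at $\alpha=0$: its derivative at $\alpha=0$ equals $\int_0^\infty u e^{-u^3}du>0$, so the supremum is attained at an interior point. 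In the parametrization $\alpha=3y^2$ used in the paper, the supremum of $e^{-2y^3}\int_0^\infty e^{3y^2u-u^3}du$ is $1.0875034\ldots$, attained near $y\approx 0.710$ (i.e.\ $\alpha\approx 1.51$), not $\Gamma(4/3)\approx 0.893$. Since $1.569\cdot\log 2=1.08755\ldots$, the constant $1.569$ is precisely this supremum divided by $\log 2$, with essentially zero slack.

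This invalidates your plan to ``absorb all accumulated losses into the constant $1.569$'': there is no room left in the multiplicative constant. In the actual proof those losses are instead routed into the additive term. Concretely, the unimodal sum over $j$ is bounded by (maximal term) $+$ (integral); the maximal term equals $C\,e^{g(x_0)}=C\,t^{B(1-\sigma)^{3/2}}$ and is exactly the ``$C$'' in the numerator $C+1+10^{-80}$, while the ``$1$'' comes from the initial term $n=1$ of the partial summation (not from writing $|\zeta(s)|\le 1+|\zeta(s)-1|$, which would produce an extra unit and give $C+2+10^{-80}$). To repair your argument you need (i) the correct uniform bound $\sup_{y\ge0}e^{-2y^3}\int_0^\infty e^{3y^2u-u^3}du\le 1.0875034$, verified rigorously since the final constant matches it exactly, and (ii) the reallocation of the sum-versus-integral discrepancy to the $(C+1+10^{-80})/\log^{2/3}t$ term.
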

\section{Proof of Theorem \ref{newboundsvino}}
Let $s\ll k^2$. Due to the size of $s$, we are in the first or third cases of Tyrina's Theorem \ref{tyrina}. However, since the third case gives an estimate for those $s$ between two consecutive $s_n$ for which we have an estimate in the first case, we can restrict ourselves to the first case. By Theorem \ref{tyrina}, we know that
\[
J_{s,k}\ll P^{2s-y(s)},
\]where $y(x)$ is the inverse function of $x(y)$, with 
\[
x(y)=-y+2 k-(k+1)^2 \log \left(1-\frac{2 (y-k)} {\left(k^2-k\right)}\right).
\]
A direct computation gives the following explicit expression for $y(x)$:
\[
 y(x) = \frac{k(k + 1)}{2} - (k + 1)^2 W\left(\frac{k(k - 1)}{2 (k + 1)^2}\exp\left(\frac{k}{2k + 2} + \frac{x}{(k+ 1)^2} - \frac{2k}{(k + 1)^2}\right)\right),
\]
where $W$ is the principal branch of the Lambert product-log function.\\Furthermore, the function $x(y)$ is monotonically increasing in the interval $\left[n, \frac{n(n + 1)}{2}\right)$. Therefore, to lower-bound $y(x)$, it suffices to upper-bound $x(y)$. We want to find $s$ such that $y(s)\ge 0.101k^2$. This means that $s\le x(0.101k^2)\le 0.1247k^2$ for every $k\ge 50$.\\
However, 
\begin{equation}\label{bound500}
    2s-0.101k^2\le 2s -\frac{k(k+1)}{2}+0.4k^2,\qquad \forall k\ge 500,\quad s\ge 0.1247k^2.
\end{equation}

It follows that, given 
\[
J_{s,k}(P)\ll P^{2s-\frac{k(k+1)}{2}+\Delta_s},
\]
we have 
\begin{equation}\label{del500}
    \Delta_s\le 0.4k^2\qquad \forall s\ge 0.1247k^2,\quad k\ge 500,
\end{equation}
since $\Delta_s$ is decreasing in $s$, for every fixed $k$.\\
In order to prove the first part of Theorem \ref{newboundsvino}, we follow Ford's method in \cite{ford_vinogradovs_2002} and we find an estimate for $J_{nk,k}$. By \eqref{del500}, we know that, given $k\ge 90000$ fixed, we have $\Delta_s\le 0.4k^2$ for every $s\ge 0.1247k^2$. Hence, in particular, if we denote $n_0=\lceil0.1247k\rceil$, we have $\Delta_{n_0}:=\Delta_{n_0k}\le 0.4k^2$. As a result, for the case $\Delta_{n-1}>k$, we follow Ford's method in Lemma 3.6 of \cite{ford_vinogradovs_2002} but instead of starting from $\Delta_1$, we start from $\Delta_{n_0}$.
\begin{lemma}\label{lemma36new}
    For every $1\le D\le 0.4k$, $k\ge 90000$ and
    \[
    0.138128 k\le n\le \frac{1}{2}k\left(0.6494+\log\left(\frac{8k}{25D}\right)-\left(\frac{1-D/k}{2-D/k}\right)\frac{D}{k}+\frac{2.051}{k}\right)+1
    \]
    we have
    \[J_{nk,k}(P)\le C_nP^{2nk-k(k+1)/2+\Delta_n}\]
    with \[
\Delta_n\le \max\left(\frac{8}{25}k^2 e^{0.6494-2 n / k-\left(\frac{1-D/k}{2-D/k}\right)\frac{D}{k}+2.051 / k},Dk\right)
\]and
\[
C_n\le k^{2.055 k^3-0.414 k^2+3 n k} 1.06^{n k^2+2\left(n^2-n\right) k+0.099912 k^3}.
\]
\end{lemma}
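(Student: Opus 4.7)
The plan is to iterate Ford's recursion Lemma \ref{lemma3.5f} (combined with Lemma \ref{lemma3.4f}, valid for $k\ge 90000$) starting from a nontrivial base case provided by Tyrina's Theorem \ref{tyrina} at $n=n_0:=\lceil 0.1247 k\rceil$, rather than from the trivial case $\Delta_1=\tfrac12 k^2(1-1/k)$ used in Ford's original Lemma 3.6. This substitution is what saves us the appreciable amount needed to achieve the constant $0.6494$ in the exponent.

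For the base case I would apply Theorem \ref{tyrina} at $s=n_0 k$. Since $n_0\le 0.1247 k$, the monotonicity of $x(y)$ and the discussion around \eqref{del500} yield $y(n_0 k)\ge 0.101 k^2$, whence $\Delta_{n_0}\le k(k+1)/2-0.101 k^2\le 0.4 k^2$. The Tyrina multiplicative constant $D_{n_0 k}=2^{2n_0 k(k+n_0)} k^{k+4 n_0^2 k^2}(n_0 k)^{2(n_0 k-k)}$ can be bounded by elementary estimates and absorbed into the target form, contributing to $C_{n_0}$ a factor of the shape $k^{O(k^3)}\cdot 1.06^{O(k^3)}$ that matches the stated expression.

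For the inductive step, I would choose $r_n$ as the integer nearest to $(2\Delta_n+k(k+1))/(2k+1)$ (Tyrina's optimal choice), $\omega=1/(3\log k)$ so that $\eta=1+1/(3\log k)$, and $j_n$ maximal subject to \eqref{condj}. Expanding $\phi_1$ from the recursion of Lemma \ref{lemma3.4f} as
\[
\phi_1 = \frac{2}{2 r_n k + 2\Delta_n - (k-r_n)(k-r_n+1)} + O(k^{-2})
\]
and substituting into $\delta_0(k,r_n,\Delta_n)=\Delta_n(1-\phi_1)-k+\tfrac12\phi_1(k^2+k+r_n^2-r_n)$ produces the recurrence $\Delta_{n+1}\le \Delta_n(1-2/k)+O(1/k)$, which solves to $\frac{8}{25}k^2\exp(0.6494-2n/k+2.051/k-\cdots)$ once the initial value $\Delta_{n_0}$ is substituted. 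The correction $-\bigl(\frac{1-D/k}{2-D/k}\bigr)(D/k)$ reflects an adjustment in the final iterations when $\Delta_n$ approaches the floor $Dk$: below this threshold Lemma \ref{lemma3.5f}'s recursion loses efficiency and the second branch $Dk$ in the maximum takes over, pinning down the upper end of the $n$-range. The bound on $C_n$ follows from the telescoping relation $C_n=C_{n-1}\max[k^{3k}\eta^{4k(n-1)+k^2},V^{(k+1)(\Delta_{n-1}-\Delta_n)}]$: the first branch accumulates to $k^{3k(n-n_0)}\cdot 1.06^{O(n k)}$, while the second is dominated by $V^{(k+1)\Delta_{n_0}}$, which is absorbable into $C_{n_0}$.

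The hardest part will be tracking the error terms in this iteration precisely enough to extract the explicit constants: the $0.6494$ (which is tied exactly to how well we use Tyrina's starting value), the accumulated per-step error $2.051/k$, and the endgame correction $\bigl(\frac{1-D/k}{2-D/k}\bigr)(D/k)$ that arises from the truncation of the $\phi_J$ recursion when $\Delta_n\downarrow Dk$. A subsidiary but necessary verification is that the prescribed $r_n$ and $j_n$ genuinely satisfy the admissibility hypotheses $4\le r_n\le k$, $\phi_i\ge 1/(k+1)$, and \eqref{condj} at every step of the iteration; this follows from a straightforward monotonicity argument in $\Delta_n$ combined with the lower bound $n\ge 0.138128 k$, which is chosen slightly above $n_0$ precisely to guarantee that the exponential formula for $\Delta_n$ dominates from the very first iteration.
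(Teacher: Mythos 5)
Your overall architecture is the paper's: replace Ford's trivial starting point $\Delta_1=\tfrac12k^2(1-1/k)$ by the value $\Delta_{n_0}\le 0.4k^2$ at $n_0=\lceil 0.1247k\rceil$ coming from Theorem \ref{tyrina} via \eqref{del500}, then iterate Lemma \ref{lemma3.4f}/Lemma \ref{lemma3.5f}. However, two concrete choices in your sketch would prevent you from reaching the stated constants. The first is the choice of $r_n$. The paper takes $r_n=\lfloor\sqrt{k^2+k-2\Delta_n}\rfloor$ (Ford's choice), because in Ford's recursion
\[
\Delta'=\Delta-2k+4k^2\,\frac{r}{2rk+y}+\cdots,\qquad y=2\Delta-(k-r)(k-r+1),
\]
and $r\mapsto r/(2rk+y)$ is minimized precisely near $r=\sqrt{k^2+k-2\Delta}$; this is what yields $\delta'\le\delta\bigl(1-\tfrac{2-\delta}{2-\delta^2}\cdot\tfrac2k+\cdots\bigr)$ and hence the factor $e^{-2n/k}$. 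Your choice, the integer nearest to $(2\Delta_n+k(k+1))/(2k+1)$, is the optimizer for \emph{Tyrina's} recursion, which is a different recursion; plugged into Ford's it is roughly $k(1+2\delta_n)/2$, and already for $\delta_n<1/8$ this makes $y<0$, so \eqref{condj} admits no $j\ge2$ and $\delta_0(k,r_n,\Delta_n)>\Delta_n$ --- the iteration stalls long before $\Delta_n$ reaches $0.001k^2$, and even where it runs the per-step decrement is strictly worse than with Ford's $r_n$.

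The second issue is the constant. If you genuinely take Tyrina's bound as the base case at $n_0$, you must carry her constant $D_{n_0k}=2^{2s(k+sk^{-1})}k^{k+4s^2}s^{2(s-k)}$ with $s=n_0k\asymp 0.125k^2$; its factor $k^{4s^2}=k^{\Theta(k^4)}$ is not of the shape $k^{O(k^3)}\cdot 1.06^{O(k^3)}$ as you claim and cannot be absorbed into the stated $C_n$. The paper instead uses Theorem \ref{tyrina} only to control the exponent $\Delta_{n_0}\le 0.4k^2$ and obtains $C_n$ by telescoping the recursion of Lemma \ref{lemma3.5f} from $C_1=k!$, with $\omega=0.06$ (so $\eta=1.06$ and $V^{k+1}\le k^{4.11k}=:W$) and the threshold $n_1=\lfloor 0.138k\rfloor+1$ below which the branch $W^{\Delta_{n-1}-\Delta_n}$ dominates; your choice $\omega=1/(3\log k)$ would not produce the base $1.06$ in the stated bound. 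A smaller point: the term $-\bigl(\tfrac{1-D/k}{2-D/k}\bigr)\tfrac Dk$ is not a penalty from the recursion ``losing efficiency'' near the floor $Dk$; it is a gain, obtained by writing $\delta_n=D/k$ in the factor $\tfrac{e^{-\delta_n}}{2-\delta_n}\le\tfrac12 e^{-\left(\frac{1-D/k}{2-D/k}\right)\frac Dk}$ when the telescoped inequality for $\delta_n+\log\delta_n+\log(2-\delta_n)$ is solved for $\delta_n$, while the branch $Dk$ of the maximum merely covers the trivial case $\Delta_{n-1}\le k$.
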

\begin{proof}
 We take 
    \[
    r_n=\left\lfloor \sqrt{k^2+k-2\Delta_n}\right\rfloor
    \]in Lemma \ref{lemma3.5f}, where $\phi_J$ are defined as in Lemma \ref{lemma3.4f}. Then, we define $\delta_n=\Delta_n/k^2$ for every $n$. Now, we fix $n\geq 2$ and we define $\delta=\delta_{n-1}$, $\delta'=\delta_n$, $\Delta=\Delta_{n-1}$, $\Delta'=\Delta_n$ and $r=r_{n-1}$.\\
    If $\Delta_{n-1}\le k$, then the upper bound for $\Delta_n$ follows trivially, since \[
\Delta_n\le \Delta_{n-1}\le k\le Dk \qquad \forall 1\le D\le 0.4k,\ k\ge 90000.
\] 
  Hence, from now on, we just consider $\Delta_{n-1}>k$.\\
By \eqref{del500}, we know that, given $k\ge 90000$ fixed, we have $\Delta_s\le 0.4k^2$ for every $s\ge 0.1247k^2$. Hence, in particular, if we denote $n_0=\lceil0.1247k\rceil$, we have $\Delta_{n_0}:=\Delta_{n_0k}\le 0.4k^2$. At this point, we follow Ford's method in Lemma 3.6 of \cite{ford_vinogradovs_2002} but instead of starting from $\Delta_1$, we start from $\Delta_{n_0}$.\\
    Let
    \[
    y=2\Delta-(k-r)(k-r+1),\qquad \phi^*=\phi^*(k,r,\Delta)=\frac{2k}{2rk+y}.
    \]
    Using the definition of $r_n$, we have
    \[
    \sqrt{k^2+k-2\Delta}-1\le r=r_{n-1}=\lfloor \sqrt{k^2+k-2\Delta}\rfloor\leq  \sqrt{k^2+k-2\Delta}.
    \]
    Hence,
     \begin{equation*}
        \begin{aligned}
            \phi^*=\phi^*(k, r, \Delta)&=\frac{2 k}{2 r k+y}\\&\geq \frac{2k}{2k\sqrt{k^2+k-2\delta k^2}+\delta k\left(2k-1+\frac{1}{2\delta k}\right)}\\ &=\frac{2k}{2k^2\sqrt{1+\frac{1}{k}-2\delta}+\delta k\left(2k-1+\frac{1}{2\delta k}\right)}\\&\ge \frac{2k}{2k^2\left(1+\frac{1}{2k}-\delta\right)+\delta k\left(2k-1+\frac{1}{2\delta k}\right)}\\&= \frac{2}{2k+1-2\delta k+2\delta k-\delta +\frac{1}{2k}}\\&=\frac{2}{2k+1-\delta+\frac{1}{2k}}\\&\ge \frac{1}{k+1}
        \end{aligned}
    \end{equation*}
    and so the hypotheses of Lemma \ref{lemma3.5f} are satisfied.\\As per Ford \cite{ford_vinogradovs_2002}, we have
    \begin{equation}
\theta_1\le \frac{2^{1-j}}{r}+\frac{2 \phi^*}{k r} \leq \frac{0.071}{k^4 r}+\frac{2 \phi^*}{k r}.
\end{equation}
Since $\delta\le 0.4$, denoting with
\[\alpha=\sqrt{k^2+k-2\delta k^2}-1,\]
we have
\begin{equation}
    \begin{aligned}
        \phi^*&=\frac{2k}{2rk+y}\\&\le \frac{2k}{2k\alpha+2\delta k^2-(k-\alpha)(k-\alpha+1)}\\&\le \frac{2}{\left(2-\delta^2\right) k-\delta}-\frac{1}{k} + \frac{2k}{\sqrt{2k}(3 + 4 k) + 2 (-1 - 3 k - k^2)}\\&<\frac{1.08696}{ k}+\frac{0.2363}{k^2}-\frac{1}{k} + \frac{2k}{\sqrt{2k}(3 + 4 k) + 2 (-1 - 3 k - k^2)}\\&\le \frac{1.08696}{ k}+\frac{0.2363}{k^2}-\frac{1}{k}\\&=\frac{0.08696}{ k}+\frac{0.2363}{k^2}
    \end{aligned}
\end{equation}as
\[
\frac{2k}{\sqrt{2k}(3 + 4 k) + 2 (-1 - 3 k - k^2)}<0\qquad \forall k\ge 90000.
\]
It follows that 
\[
\theta_1\le \frac{0.071}{k^4 r}+\frac{0.08696}{ k^2 r}+\frac{0.2363}{k^3 r} \leq\frac{0.08697}{ k^2 r}.\]
As per Ford \cite{ford_vinogradovs_2002},
   \begin{equation}\label{eq1D}
       \begin{aligned}
           \Delta^{\prime}&=\Delta-k+\frac{\phi^*+\theta_1}{2}(2 k r-y)\\&\leq \Delta-k+\frac{1}{2}\phi^*(2kr-y)+\frac{0.043485}{k^2}\left(2k-\frac{y}{r}\right)\\&=\Delta-k+\frac{1}{2}\phi^*(2kr-y)+\frac{0.043485}{k^2}\left(2k-\frac{1}{r}\left(2\Delta-(k-r)(k-r+1)\right)\right)\\&\le \Delta-k+\frac{1}{2}\phi^*(2kr-y)+\frac{0.043485}{k^2}\left(r-1+\frac{k^2+k-2\Delta}{r}\right)\\&=\Delta-\frac{2ky}{2rk+y}+\frac{0.043485}{k^2}\left(r-1+\frac{k^2+k-2\Delta}{r}\right)\\&=\Delta-2k+4k^2\frac{r}{2rk+y}+\frac{0.043485}{k^2}\left(r-1+\frac{k^2+k-2\Delta}{r}\right)
       \end{aligned}
   \end{equation}
because
   \begin{equation*}
       \begin{aligned}
           -k+\frac{1}{2}\phi^*(2kr-y)&=-k+\frac{1}{2}\frac{2k}{2rk+y}(2kr-y)=-k+k\left(1-\frac{2y}{2kr+y}\right)=-\frac{2ky}{2rk+y}.
       \end{aligned}
   \end{equation*} As a function of the real variable $r$, \[
   \frac{r}{2rk+y}
   \]has a positive second derivative and a minimum at 
   \[
   r_0=\sqrt{k^2+k-2\Delta}.
   \]It follows that in the interval \[
    \sqrt{k^2+k-2\Delta_{n-1}}-1\le r\le  \sqrt{k^2+k-2\Delta}
    \]the maximum occurs in 
    \[
    r_1= \sqrt{k^2+k-2\Delta_{n-1}}-1.
    \]
  Hence,
  \begin{equation}\label{lowerr}
      \begin{aligned}
          &\frac{r}{2rk+y}\ge\frac{r_0}{2r_0k+y}\\&= \frac{\sqrt{k^2+k-2\delta k^2}}{2k(\sqrt{k^2+k-2\delta k^2})+2\delta k^2-(k-\sqrt{k^2+k-2\delta k^2})(k-\sqrt{k^2+k-2\delta k^2}+1)}\\&\ge \frac{1-\delta}{2k}
      \end{aligned}
  \end{equation}
and
\begin{align*}
  & \frac{r}{2rk+y}\le \frac{r_1}{2r_1k+y}\\&= \frac{\sqrt{k^2+k-2\delta k^2}-1}{2k(\sqrt{k^2+k-2\delta k^2}-1)+2\delta k^2-(k-\sqrt{k^2+k-2\delta k^2}+1)(k-\sqrt{k^2+k-2\delta k^2}+1+1)}. 
\end{align*}
We want to estimate the quantity $$\frac{r_1}{2r_1k+y}$$ using Taylor expansion for $k\rightarrow +\infty$.
We observe that
\begin{align*}
    &\lim_{k\rightarrow\infty} k\cdot  \frac{r_1}{2r_1k+y}=\frac{\sqrt{1 - 2\delta}}{-2 + 4\sqrt{1 - 2\delta} + 4\delta}.
\end{align*}
Furthermore, for all $\delta \in (0, 0.4)$ we have
\[
\frac{\sqrt{1 - 2\delta}}{-2 + 4\sqrt{1 - 2\delta} + 4\delta} < \frac{1 - \delta}{2 - \delta^2}.
\]
 Now,
\[
\frac{\sqrt{1 - 2\delta}}{-2 + 4\sqrt{1 - 2\delta} + 4\delta}=\frac{1}{2} - \frac{\delta}{2} + \frac{\delta^2}{4} - \frac{\delta^3}{4} + \frac{\delta^4}{16}-\frac{3\delta^5}{16}+O(\delta^6)
\]
and
\[
\frac{1 - \delta}{2 - \delta^2} =\frac{1}{2} - \frac{\delta}{2} + \frac{\delta^2}{4} - \frac{\delta^3}{4} + \frac{\delta^4}{8}-\frac{\delta^5}{8}+O(\delta^6).
\]It follows that
\begin{equation}\label{firsterm}
    \frac{\sqrt{1 - 2\delta}}{-2 + 4\sqrt{1 - 2\delta} + 4\delta}\frac{1}{k} \le \left(\frac{1 - \delta}{2 - \delta^2}-\frac{\delta^4}{16}-\frac{\delta^5}{16}\right)\frac{1}{k}.
\end{equation}
Now, we repeat the process for the term of second order. We have
\begin{align*}
    &\lim_{k\rightarrow\infty} k^2\cdot\left(  \frac{r_1}{2r_1k+y}-\frac{\sqrt{1 - 2\delta}}{-2 + 4\sqrt{1 - 2\delta} + 4\delta}\frac{1}{k}\right)=\frac{-1 + \sqrt{1 - 2 \delta} + 2 \delta}{4 (-1 + 2 \sqrt{1 - 2 \delta} + 2 \delta)^2}.
\end{align*}
As before, using the Taylor expansion in $\delta=0$ ($\delta>1/k\rightarrow0$ for $k\rightarrow +\infty$), we have\begin{equation}
    \frac{-1 + \sqrt{1 - 2 \delta} + 2 \delta}{4 (-1 + 2 \sqrt{1 - 2 \delta} + 2 \delta)^2}\frac{1}{k^2}\le
\left(\frac{4\delta}{(2-\delta^2)^2}-\frac{3\delta}{4}\right)\frac{1}{k^2}.
\end{equation}However, in order to have an upper bound for \[
\frac{r_1}{2r_1k+y}
\]a direct computation shows that we need to use the slightly weaker bound\begin{equation}\label{secondterm}
    \frac{-1 + \sqrt{1 - 2 \delta} + 2 \delta}{4 (-1 + 2 \sqrt{1 - 2 \delta} + 2 \delta)^2}\frac{1}{k^2}\le
\left(\frac{4\delta}{(2-\delta^2)^2}-\frac{14\delta}{25}\right)\frac{1}{k^2},
\end{equation}where we have $-14\delta/25$ instead of $-3\delta/4$. Using the upper bounds found in \eqref{firsterm} and \eqref{secondterm} we obtain
\begin{equation}\label{uppphi}
    \frac{r}{2rk+y}\le \left(\frac{1 - \delta}{2 - \delta^2}-\frac{\delta^4}{16}-\frac{\delta^5}{16}\right)\frac{1}{k}+\left(\frac{4\delta}{(2-\delta^2)^2}-\frac{14\delta}{25}\right)\frac{1}{k^2}.
\end{equation}
From \eqref{eq1D} and \eqref{uppphi}, it follows that
\begin{equation*}
\begin{aligned}
    \delta' &\le \delta -\frac{2}{k} + 4\left(\frac{1-\delta}{2-\delta^2} - \frac{\delta^4}{16}- \frac{\delta^5}{16}\right)\frac{1}{k}+\left(\frac{4\delta}{(2-\delta^2)^2}-\frac{14\delta}{25}\right)\frac{1}{k^2}\\&\ \ +\frac{0.043485}{ k^4}\left(r-1+\frac{k^2+k-2 \Delta}{r}\right)\\&= \delta\left(1 - \frac{4 - 2\delta}{(2-\delta^2)k} - \frac{\delta^3}{4k}- \frac{\delta^4}{4k}+\frac{4}{(2-\delta^2)^2 k^2}-\frac{14}{25k^2}\right)\\&\ \ +\frac{0.043485}{k^4}\left(\sqrt{k^2+k-2\delta k^2}-1+\frac{k^2+k-2 \delta k^2}{\sqrt{k^2+k-2\delta k^2}-1}\right)\\&\le\delta\left(1 - \frac{4 - 2\delta}{(2-\delta^2)k} - \frac{\delta^3}{4k}- \frac{\delta^4}{4k}+\frac{4}{(2-\delta^2)^2 k^2}-\frac{14}{25k^2}\right)+\frac{0.08697(1-\delta)}{ k^3}\\&\le \delta\left(1 - \frac{2-\delta}{2-\delta^2}\left(\frac{2}{k} +\frac{\delta^3}{4k}+ \frac{\delta^4}{4k}-\frac{32}{21 k^2}+\frac{14}{25k^2}\right)\right)+\frac{0.08697}{k^3}\frac{2-\delta}{2-\delta^2}\\&=\delta\left(1 - \frac{2-\delta}{2-\delta^2}\left(\frac{2}{k} +\frac{\delta^3}{4k}+ \frac{\delta^4}{4k}-\frac{506}{525 k^2}-\frac{0.08697}{ k^3\delta}\right)\right)\\&\le\delta\left(1 - \frac{2-\delta}{2-\delta^2}\left(\frac{2}{k} +\frac{1}{4k^4}+ \frac{1}{4k^5}-\frac{506}{525 k^2}-\frac{0.08697}{ k^3\delta}\right)\right),
\end{aligned} 
\end{equation*}where in the last passage we used the fact that $\delta>1/k$.\\
Now we define \[
\beta=\frac{2}{k}-\frac{506}{525k^2}+\frac{1}{4k^4}+ \frac{1}{4k^5},\qquad \beta^\prime=\beta-\frac{c}{\delta},\qquad c=\frac{0.08697}{ k^3}
\]and let$$
\delta^{\prime \prime}=\delta\left(1-\frac{2-\delta}{2-\delta^2} \beta^{\prime}\right).
$$Since $y+\log y+\log (2-y)$ is increasing on $\left(0, \frac{1}{2}\right]$, and hence in $\left(0,0.4\right]$, we have\begin{equation*}
\begin{aligned}
\delta^{\prime}+\log \delta^{\prime}+\log \left(2-\delta^{\prime}\right) \leq & \delta^{\prime \prime}+\log \delta^{\prime \prime}+\log \left(2-\delta^{\prime \prime}\right) \\
= & \delta+\log \delta+\log (2-\delta) \\
& -\frac{2 \delta-\delta^2}{2-\delta^2} \beta^{\prime}+\log \left[\left(1-\frac{2-\delta}{2-\delta^2} \beta^{\prime}\right)\left(\frac{2-\delta^{\prime \prime}}{2-\delta}\right)\right].
\end{aligned}
\end{equation*}As per Ford \cite{ford_vinogradovs_2002}, given\begin{equation*}
T=-\frac{2 \delta-\delta^2}{2-\delta^2} \beta^{\prime}+\log \left(1-\frac{2-\delta}{2-\delta^2} \beta^{\prime}\right)+\log \left(\frac{2-\delta^{\prime \prime}}{2-\delta}\right),
\end{equation*}we have
\begin{equation*}
\begin{aligned}
T & \leq-\beta^{\prime}-\frac{\left(\beta^{\prime}\right)^2}{2\left(2-\delta^2\right)^2}\left((2-\delta)^2+\delta^2\right)+\frac{\left(\beta^{\prime}\right)^3}{3\left(2-\delta^2\right)^3}\left(-(2-\delta)^3+\delta^3\right) \\
& \leq-\beta^{\prime}-\frac{2}{5}\left(\beta^{\prime}\right)^2 \\
& \leq-\beta-\frac{2}{5} \beta^2+\frac{c(1+0.8 \beta)}{\delta} .
\end{aligned}
\end{equation*}Hence, using an iterative argument we get
\begin{equation*}
\begin{aligned}
\delta_n+\log \delta_n+\log \left(2-\delta_n\right) \leq & \delta_{n_0}+\log \delta_{n_0}+\log \left(2-\delta_{n_0}\right)-(n-n_0)\left(\beta+0.4 \beta^2\right) \\
& +c(1+1.6 / k)\left(\frac{1}{\delta_{n_0}}+\ldots+\frac{1}{\delta_{n-1}}\right) .
\end{aligned}
\end{equation*}Since we are working with $\Delta_n>k$, that is $\delta>1/k$, and we found the inequality 
\begin{equation}\label{ineqdelta}
    \delta^\prime\le \delta\left(1 - \frac{2-\delta}{2-\delta^2}\left(\frac{2}{k} +\frac{1}{4k^4}+ \frac{1}{4k^5}-\frac{506}{525 k^2}-\frac{0.08697}{k^3\delta}\right)\right),
\end{equation}
we have
\begin{equation*}
\delta_{i+1} \leq \delta_i(1-\alpha), \quad \alpha=0.869565(\beta-k c).
\end{equation*}It follows that
\begin{equation*}
c(1+1.6 / k)\left(\frac{1}{\delta_{n_0}}+\ldots+\frac{1}{\delta_{n-1}}\right) \leq \frac{c(1+1.6 / k)}{\alpha \delta_{n-1}} \leq \frac{0.051}{k} .
\end{equation*}Therefore
\begin{equation*}
\delta_n \leq \frac{\delta_{n_0}\left(2-\delta_{n_0}\right) e^{\delta_{n_0}}}{\left(2-\delta_n\right) e^{\delta_n}} e^{-(n-n_0)\left(\beta+0.4 \beta^2\right)+0.051 / k}
\end{equation*}Now,
\begin{equation*}
\beta+0.4 \beta^2 \geq \frac{2}{k}-\frac{506}{525k^2}+\frac{1}{4k^4}+ \frac{1}{4k^5}+\frac{0.4}{k^2}\left(2-\frac{506}{525\cdot90000}\right)^2\ge \frac{2}{k} .
\end{equation*}
Furthermore
\begin{equation*}
\frac{e^{-\delta_n}}{2-\delta_n} \leq\frac{1}{2} e^{\delta_n /\left(2-\delta_n\right)-\delta_n}
\end{equation*}and, since
 $\delta_n=\Delta_n/k^2$, and $\delta_n\le0.4$ for $k\ge 90000$, if we write $\delta_n=D/k$, where $1\le D\le 0.4k$, we have  $$
\frac{e^{-\delta_n}}{2-\delta_n} \leq \frac{1}{2} e^{\delta_n /\left(2-\delta_n\right)-\delta_n}=\frac{1}{2}e^{-\left(\frac{1-D/k}{2-D/k}\right)\frac{D}{k}}.
$$

Also, using the fact that $\delta_{n_0}\le 0.4$, we have $$
\delta_{n_0}\left(2-\delta_{n_0}\right) e^{\delta_{n_0}}\le0.4\cdot \left(2-0.4\right)e^{0.4}=\frac{16}{25}e^{0.4}.
$$
Furthermore, for $k\ge 90000$, we have
\[
e^{2n_0/k}\le e^{0.2494+2/k}.
\]
It follows that
\begin{equation*}
\delta_n \leq \frac{8}{25}e^{0.6494-2 n / k-\left(\frac{1-D/k}{2-D/k}\right)\frac{D}{k}+2.051 / k}
\end{equation*}and so \[
\Delta_n\le \frac{8}{25}k^2 e^{0.6494-2 n / k-\left(\frac{1-D/k}{2-D/k}\right)\frac{D}{k}+2.051 / k}.
\]
Now we shift our attention on the constant. As per Ford \cite{ford_vinogradovs_2002}, to bound the constants $C_n$, we choose $\omega=0.06>1/(3\log k)$ in Lemma \ref{lemma3.5f}, so that
$$
V^{k+1}=\left(300 k^3 \log k\right)^{k+1} \leq k^{4.11 k}=: W 
$$
for every $k\ge 90000$.
Then, 
\begin{equation}\label{estw}
   W^{\Delta_{n-1}-\Delta_n}>k^{3 k} 1.06^{4 k(n-1)+k^2} \quad(n \leq 0.138 k+1),
\end{equation} since it was proved by Ford in \cite{ford_vinogradovs_2002} for the wider range $n\le 1.97k+1$.\\
Now, let $n_1=\lfloor 0.138 k\rfloor+1$. By \eqref{estw} and Lemma \ref{lemma3.5f},
$$
C_{n_1} \leq W^{\Delta_1-\Delta_{n_1}} k ! \leq W^{k^2 / 2-\Delta_{n_1}}
$$
and, for $n>n_1$,
$$
C_n \leq k^{3 k} 1.06^{4 k(n-1)+k^2} W^{\Delta_{n-1}-\Delta_n} C_{n-1} .
$$
Iterating this last inequality gives, for $n>n_1$,
$$
\begin{aligned}
C_n & \leq W^{k^2 / 2} k^{3 k\left(n-n_1\right)} 1.06^{\left(n-n_1\right) k^2+4 k\left(n_1+\ldots+n-1\right)} \\
& \leq W^{k^2 / 2} k^{3 k(n-0.138 k)} 1.06^{(n-0.138 k) k^2+2 k\left(n^2-n-(0.138 k)^2+0.138 k\right)} \\
& \leq k^{2.055 k^3-0.414 k^2+3 n k} 1.06^{n k^2+2\left(n^2-n\right) k+0.099912 k^3}.
\end{aligned}
$$
This concludes the proof of the lemma.
\end{proof}
\begin{remark}
    In Lemma \ref{lemma36new}, contrary to Lemma 3.6 in \cite{ford_vinogradovs_2002}, we are not allow to consider $2k$ as a lower bound for the range of $n$ for which the lemma holds. Indeed, the upper bound for $n$, that is $\frac{1}{2}k\left(0.6494+\log\left(\frac{8k}{25D}\right)-\left(\frac{1-D/k}{2-D/k}\right)\frac{D}{k}+\frac{2.051}{k}\right)+1$, is a decreasing function in $D$. As a result, this upper bound reaches the minimum when, for $k\ge 90000$ fixed, $D=0.4k$. Substituting $D=0.4k$ in the above expression, one gets $n\le 2.0255 + 0.138128 k$, which is less than $2k$ for $k\ge 90000$. Hence, in order to have an estimate for $\Delta_n$ and $C_n$ which is uniform for every $1\le D\le 0.4 k$ and $k\ge 90000$, we took $0.138128k$ as lower bound for $n$, in order to ensure that for every $D$ and $k$ there exists always at least a value of $n$ in the range given in the hypotheses of Lemma \ref{lemma36new}. Also, the choice $0.138128k$ as lower bound for $n$ is admissible, as in Lemma \ref{lemma36new} the starting point is $n_0=\lceil0.1247k\rceil$ which is less than $0.138128k$ for $k\ge 90000$.
\end{remark}
Now, we will use Lemma \ref{lemma36new} to prove the first part of the theorem.\\Given $k\ge 90000$, every admissible $s$ such that $s \not \equiv 0\ (\bmod\  k)$, is of the form $s=n k+u$, where $0<u<k, \ u\equiv s \bmod k$, and
\[
 0.138128k\le n\le \frac{1}{2}k\left(0.6494+\log\left(\frac{8k}{25D}\right)-\left(\frac{1-D/k}{2-D/k}\right)\frac{D}{k}+\frac{2.051}{k}\right).
\]
Using Hölder's inequality, we get
$$
J_{s, k}(P)=\int_{[0,1]^k}|S|^{2 n k+2 u} d \boldsymbol{\alpha} \leq\left(\int_{[0,1]^k}|S|^{2 n k} d \boldsymbol{\alpha}\right)^{1-u / k}\left(\int_{[0,1]^k}|S|^{2 n k}|S|^{2 k} d \boldsymbol{\alpha}\right)^{u / k},
$$where $\boldsymbol{\alpha}=(\alpha_1,\dots,\alpha_k)$ and
\[
S:=\sum_{1 \leq x \leq P} e\left(\alpha_1 x+\cdots+\alpha_k x^k\right).
\]
By Lemma \ref{lemma36new}, we have
$$
J_{s, k}(P) \leq   k^{2.055 k^3-0.414 k^2+3 n k} 1.06^{n k^2+2\left(n^2-n\right) k+0.099912 k^3} P^{2 s-k(k+1) / 2+\Delta}
$$
where
 \[
\Delta\le \max\left(\frac{8}{25}k^2 e^{0.6494-2 n / k-\left(\frac{1-D/k}{2-D/k}\right)\frac{D}{k}+2.051 / k},Dk\right)\times\left(1-\frac{u}{k}+\frac{u}{k} e^{-2 / k}\right).
\]
Further,
$$
1-\frac{u}{k}+\frac{u}{k} e^{-2 / k} \leq 1-\frac{2 u}{k^2}+\frac{2 u}{k^3} \leq e^{-2 u / k^2+2 u / k^3}.
$$
Hence,\begin{equation*}
\begin{aligned}
\Delta \leq & \max\left(\frac{8}{25} k^2 \exp \left(0.6494-\frac{2(n k+u-k)}{k^2}-\left(\frac{1-D / k}{2-D / k}\right) \frac{D}{k}+\frac{2.051}{ k}+\frac{2 u}{k^3}\right),\right.\\&\ \ \quad\ \ \left.Dk\exp\left(-\frac{2u}{k^2}+\frac{2u}{k^3}\right)\right) \\
= & \max\left(\frac{8}{25} k^2  \exp \left(0.6494-\frac{2(s-k)}{k^2}-\left(\frac{1-D / k}{2-D / k}\right) \frac{D}{k}+\frac{2.051}{ k}+\frac{2(s \bmod k)}{k^3}\right),\right.\\&\ \ \quad\ \  \left.Dk\exp\left(\frac{2(s \bmod k)}{k^2}\left(-1+\frac{1}{k}\right)\right)\right).
\end{aligned}
\end{equation*}
This completes the first part of Theorem \ref{newboundsvino}.\\
    We now turn to the second part of the theorem.
For $k< 90000$, we follow Ford's argument for proving Theorem \ref{th3ford} in \cite{ford_vinogradovs_2002}. Running Program 1 listed in Section \ref{sect9} with $\phi_J$ defined as in Lemma \ref{lemma3.4p}, we get the desired bounds for $k$ in the ranges
\[[129,137],\ [138,139],\ [140,146],\ [147,148],\ [149,170],\ [171,190],\ [191,339],\ [340,499].
\]For $500\le k< 90000$ we use Program 2 in Section \ref{sect9}, with $\phi_J$ defined as in Lemma \ref{lemma3.4p}. Program 2 differs from Program 1 in the definition of the variable $\operatorname{del0}$, as, due to \eqref{del500}, we know that, for $k\ge 500$, $\Delta_n\le 0.4k^2$ for every $n\ge n_0=\lceil0.1247k\rceil$. Hence, in order to initialize $\operatorname{del0}$, we start from $\Delta_{n_0}\le 0.4k^2$ instead of starting from $\Delta_1\le \frac{1}{2}k^2(1-\frac{1}{k})$.\\
For $k\ge 90000$, we follow Preobrazhenski\u{\i}'s argument \cite{preobrazhenskii_new_2011}. Taking $D=0.001k$ in Lemma \ref{lemma36new}, we observe that for every $k\ge 90000$, 
\[
2k\le n\le \frac{1}{2}k\left(0.6494+\log\left(\frac{8k}{25D}\right)-\left(\frac{1-D/k}{2-D/k}\right)\frac{D}{k}+\frac{2.051}{k}\right).
\]Hence, since from now on we will work with $D=0.001k$, we will restrict the range of $n$ in Lemma \ref{lemma36new} to
\begin{equation}\label{newrangen}
     2 k\le n\le \frac{1}{2}k\left(0.6494+\log\left(\frac{8k}{25D}\right)-\left(\frac{1-D/k}{2-D/k}\right)\frac{D}{k}+\frac{2.051}{k}\right)+1.
\end{equation}
Choosing
\begin{equation}\label{choicen}
n=\left\lceil\frac{1}{2}k\left(0.6494+\log\left(\frac{8k}{25D}\right)-\left(\frac{1-D/k}{2-D/k}\right)\frac{D}{k}+\frac{2.051}{k}\right)\right\rceil,
\end{equation}
we have, by Lemma \ref{lemma36new}, that
\begin{align*}
    \Delta_n&\le  \max\left(\frac{8}{25}k^2 e^{0.6494-2 n / k-\left(\frac{1-0.001}{2-0.001}\right)0.001+2.051 / k},0.001k^2\right)\\&\le \max\left(\frac{8}{25}k^2 e^{-\log\left(\frac{8}{25\cdot0.001}\right)},0.001k^2\right)\\&=0.001k^2,
\end{align*}
where we used the fact that
\[
-\frac{2n}{k}\le -\left(0.6494+\log\left(\frac{8k}{25D}\right)-\left(\frac{1-D/k}{2-D/k}\right)\frac{D}{k}+\frac{2.051}{k}\right).
\]
Hence, for every $k\ge 90000$ fixed, there exists $s\le \rho k^2$ such that $$
J_{s, k}(P) \leq k^{\theta k^3} P^{2 s-\frac{1}{2} k(k+1)+0.001 k^2},
$$where 
\[
\rho\le \frac{nk}{k^2}\le \frac{1}{2}\left(0.6494+\log\left(\frac{8k}{25D}\right)-\left(\frac{1-D/k}{2-D/k}\right)\frac{D}{k}+\frac{2.051}{k}\right)+\frac{1}{k}\le 3.20863
\]
and $\theta=2.17720$ ( the value of $\theta$ follows from a uniform upper bound for the constant
\[ k^{2.055 k^3-0.414 k^2+3 n k} 1.06^{n k^2+2\left(n^2-n\right) k+0.099912 k^3}
\]found in the first part of Theorem \ref{newboundsvino} which holds for every $k\ge 90000$, and $n$ in the range \eqref{newrangen}). This estimate holds since, using our choice \eqref{choicen} for $n$, we have $s=nk\le \rho k^2$ and $\Delta_n\le 0.001k^2.$
\begin{remark}
    The starting point $\Delta_{n_0}\le 0.4k^2$ with $n_0=\lceil0.1247k\rceil$ is nearly the optimal choice. Indeed, taking $\Delta_{n_0}\le ck^2$ with $c<0.4$ would imply a greater value for $n_0$. A direct computation shows that, although the constant $\frac{8}{25}$ in the estimate of $\Delta_n$ would decrease, the quantity $e^{0.6494}$ would increase, leading to an overall estimate for $\Delta_n$ which is worse than that one found by Ford in Lemma 3.6 of \cite{ford_vinogradovs_2002} when $\Delta_n>k$.\\ Furthermore, the lower bound for $k$ in \eqref{bound500} is optimal, as the inequality in \eqref{bound500} is no longer valid for $k<500$. Finally, also the value $0.101k^2$ in \eqref{bound500} is nearly optimal, as this inequality is satisfied only with $2s-ak^2$ and $a>0.1$.
\end{remark}
\section{Proof of Theorem \ref{theorem1}}\label{sect4}
 We will consider the cases $\lambda\le 84$ and $\lambda\ge 84$ separately. 
 \subsection{Case $\lambda\ge 84$}
We combine Ford's method \cite{ford_vinogradovs_2002} with the new estimates for the Vinogradov's integral found in Theorem \ref{newboundsvino} to obtain an improved upper bound for $S(N,t)$ in Lemma \ref{lemma5.1ford}.\\As in \cite{ford_vinogradovs_2002}, we make the following assumptions:\begin{equation*}
\left\lfloor M_1\right\rfloor \geq M_2 \geq 100 g, \quad s \leq 2^g, \quad r \geq 13 g, \quad r \geq s, \quad g \geq h \geq 3.
\end{equation*}
Also, we define
\begin{equation*}
M_1=N^{\mu_1}, \quad M_2=N^{\mu_2}, \quad \mu_1>\mu_2 
\end{equation*}
and\begin{equation}\label{condphigam}
\phi=g / \lambda, \quad \gamma=h / \lambda, \quad 1 \leq \gamma \leq \frac{1}{1-\mu_2}<\frac{1}{1-\mu_1} \leq \phi \leq \frac{1}{1-\mu_1-\mu_2}.
\end{equation}
Following the proof of Theorem 2 in \cite{ford_vinogradovs_2002} for $\lambda$ large, using the notation of Lemma \ref{lemma5.1ford}, we have 
\begin{equation}\label{w}
W_h \cdots W_g \leq 2^{g^2} M_2^{h+(h+1)+\cdots+g} N^{-H}
\end{equation}
where
\begin{equation}
\begin{aligned}
H \geq & \lambda^2\left(\phi+\gamma-\frac{\gamma^2}{2}-\frac{1-\mu_1-\mu_2}{2} \phi^2-\frac{2-\mu_1-\mu_2}{2\left(1-\mu_1\right)\left(1-\mu_2\right)}\right) \\
& \quad+\lambda\left(\frac{\gamma}{2}-\frac{\phi}{2}\left(1-\mu_1-\mu_2\right)\right)-\frac{2-\mu_1-\mu_2}{8} \\
&=: H_2 \lambda^2+H_1 \lambda-H_0.
\end{aligned}
\end{equation}
At this point, we shall take the near-optimal values\begin{equation}\label{parametres}
\mu_1  =0.1905, \quad \mu_2=0.1603, \quad k=\left\lfloor\frac{\lambda}{1-\mu_1-\mu_2}+0.000003\right\rfloor \geq 129,\quad
r  =\left\lfloor\rho k^2+1\right\rfloor,
\end{equation}where $\rho $ is taken from Theorem \ref{newboundsvino}.\\Furthermore, we choose $Y=288$ and we want to estimate $S(N,t)$ when $N\ge e^{Y\lambda^2}$, since otherwise for $N\le e^{Y\lambda^2}$ Theorem \ref{theorem1} follows trivially: \[
    S(N,t)\le N\le e^{Y/132.94357}N^{1-1/132.94357\lambda^2}\le 8.7979 N^{1-1/132.94357\lambda^2}.
    \]Finally, we consider\begin{equation}\label{g}
105 \leq g \leq 1.254 \lambda
\end{equation}and the following bounds for the quantity $k/\lambda$:\begin{equation*}
k_0:=\frac{1}{0.6492}-\frac{0.999997}{\lambda} \leq \frac{k}{\lambda} \leq \frac{1}{0.6492}+\frac{0.000003}{\lambda}=: k_1 \text {. }
\end{equation*}
First of all, by \eqref{parametres} and Theorem \ref{newboundsvino} we have \begin{equation}\label{vin}
\left\lfloor M_1\right\rfloor^{-2 r+\frac{1}{2} k(k+1)} J_{r, k}\left(\left\lfloor M_1\right\rfloor\right) \leq C_1 M_1^{0.001 k^2},
\end{equation}
where $C_1=k^{\theta k^3}$ and $\theta$ comes from Theorem \ref{newboundsvino}.\\Since $N\ge e^{Y\lambda^2}$, from \eqref{parametres} and \eqref{g} we have $M_2 \geq e^{\mu_2 Y \lambda^2} \geq e^{0.1019 Y g^2}$. Let $D=0.1019 Y=29.3472$ and $\eta=\frac{1}{\xi g^{3 / 2}}$, where $3 \leq \xi \leq 6$. By the assumption on $g$, \eqref{g}, the hypotheses of Theorem \ref{th4ford} are satisfied if we take $P=M_2$ and $k=g$. Hence, by Theorem \ref{th4ford},
\begin{equation}\label{inc}
   J_{s, g, h}\left(\mathscr{C}\left(M_2, M_2^\eta\right)\right) \leq C_2 P^{2 s-\frac{t}{2}(h+g)+E_2}, 
\end{equation}
where\begin{equation*}
\begin{aligned}
E_2 & =\frac{1}{2} t(t-1)+\frac{\eta s^2}{2 t}+h t \exp \left\{-\frac{s}{h t}\right\} \\
\log C_2 & =\frac{s^2}{t}+\frac{10.5 \xi^2 t g^2 \log ^2 g}{D}-s\left(\left(\xi g^{3 / 2}+h\right)(1-1 / h)^{s / t}-h\right) \log \left(\xi g^{3 / 2} / 10\right).
\end{aligned}
\end{equation*}Since by the hypotheses of Theorem \ref{th4ford} and the range \eqref{g} for $g$ we have$$
R=M_2^\eta \geq e^{D g^2 \eta} \geq g^{10}>6^{25},
$$following exactly the argument used by Ford in the proof of Theorem 2 in \cite{ford_vinogradovs_2002} for $\lambda$ large with $\delta=\frac{1}{25}$ instead of $\delta=\frac{1}{26}$ we get$$
\begin{aligned}
\frac{M_2}{\left|\mathscr{C}\left(M_2, R\right)\right|} & \leq(\log R)\left(1.0417 \xi g^{3 / 2}+1\right)\left(\frac{26.0417 \xi g^{3 / 2}}{2.5}\right)^{1.0417 \xi g^{3 / 2}} \\
& \leq(\log N) C_3 \leq C_3 N^{E_3},
\end{aligned}
$$where\begin{equation*}
\begin{aligned}
C_3 & =\left(10.4167 \xi g^{3 / 2}\right)^{1.0417 \xi g^{3 / 2}} \\
E_3 & =\frac{\log \left(Y \lambda^2\right)}{Y \lambda^2}.
\end{aligned}
\end{equation*}
By \eqref{parametres}, we have \begin{equation}\label{r}
(5 r)^k \leq\left(38.2 \lambda^2\right)^{1.55 \lambda} \leq \lambda^{4.65 \lambda}
\end{equation}
and
\begin{equation*}
    r\ge 7.6\lambda^2\ge \lambda^2.
    \end{equation*}
    It follows that
\begin{equation}\label{e3}
    \frac{E_3}{r} \leq \frac{\log \left(Y \lambda^2\right)}{7.6 Y \lambda^4}\le\frac{\log \left(Y \lambda^2\right)}{ Y \lambda^4} .
\end{equation}
    \begin{lemma}\label{medium}
    For $84\le\lambda\le 220$ and $N\ge e^{288\lambda^2}$ we have the following estimate
    \[
    S(N,t)\le 8.7979N^{1-1/132.94357\lambda^2}.
    \]
\end{lemma}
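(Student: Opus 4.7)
The plan is to apply Lemma \ref{lemma5.1ford} with the parameters $\mu_1, \mu_2, k, r$ chosen as in \eqref{parametres}, $g$ in the range \eqref{g}, $\mathscr{B} = \mathscr{C}(M_2, M_2^\eta)$ with $\eta = 1/(\xi g^{3/2})$ and $\xi \in [3,6]$, and to combine the resulting expression with the estimates \eqref{vin}, \eqref{inc}, \eqref{w}, together with the bound $M_2/|\mathscr{C}(M_2, M_2^\eta)| \leq C_3 N^{E_3}$ already recorded. The two ``trivial'' summands $2M_1M_2$ and $t(M_1M_2)^{k+1}/((k+1)N^k)$ in Lemma \ref{lemma5.1ford} are absorbed easily: the first equals $2N^{\mu_1+\mu_2}=2N^{0.3508}$, well below the target, while the second, being of order $N^{(\mu_1+\mu_2)(k+1)-k+\lambda}$, is controlled by the choice of $k$. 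Hence the main work falls on the third summand.

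Inside that summand, the factor $M_2^{-2s}$ from Lemma \ref{lemma5.1ford} and the factor $M_2^{2s-\tau(h+g)/2}$ from \eqref{inc} (with $\tau = g-h+1$) combine with $M_2^{\tau(h+g)/2}$ coming from $W_h\cdots W_g$ in \eqref{w} to leave only $M_2^{E_2}$ on the $M_2$ side. Using $M_1 = N^{\mu_1}$ and $M_2 = N^{\mu_2}$, the exponent of $N$ then takes the shape
\[
1 + \frac{E_3}{r} + \frac{1}{2rs}\bigl(0.001\, k^2\mu_1 + \mu_2 E_2 - H\bigr) + \frac{\log\bigl((5r)^k C_1 C_2 2^{g^2}\bigr)}{2rs\log N},
\]
while the outer constant is $C_3^{1/r}\bigl((5r)^k C_1 C_2 2^{g^2}\bigr)^{1/(2rs)}$. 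Since $r, s = O(\lambda^2)$ and each of $\log C_1, \log C_2, \log C_3, k\log(5r), g^2$ is $O(\lambda^2\log\lambda)$, every log-of-constant contribution is $O(\log\lambda/\lambda^2)$; thus, under $N \geq e^{288\lambda^2}$ and $\lambda \geq 84$, the outer constant is bounded by $8.7979$, and the last two exponent-terms are $O(\log\lambda/\lambda^4)$ and hence negligible. What remains is to verify, by inserting the lower bound for $H$ from \eqref{w}, the explicit formula for $E_2$, and the relations $k\in[k_0\lambda, k_1\lambda]$, $r \geq \rho k_0^2\lambda^2$, that
\[
\frac{H - 0.001\, k^2\mu_1 - \mu_2 E_2}{2rs} \;\geq\; \frac{1}{132.94357\,\lambda^2}
\]
holds for some admissible choice of $\phi=g/\lambda, \gamma=h/\lambda, \xi$ satisfying \eqref{condphigam}, uniformly in $\lambda\in[84,220]$.

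The principal obstacle is this last numerical inequality. Its left-hand side is an algebraic function of $\phi, \gamma, \xi$ depending on $\lambda$ in a non-smooth way, through the floor defining $k$ in \eqref{parametres}. The natural remedy is to partition $[84,220]$ into sub-intervals on which $k$ is constant, search for near-optimal $\phi, \gamma, \xi$ in each, and verify the inequality there; the critical sub-interval is the one adjacent to $\lambda=84$, where the constraints bite hardest and the constant $132.94357$ is pinned down. Once the optimal parameters have been identified near the critical $\lambda$, the remaining sub-intervals follow by monotonicity of the relevant quantities as $\lambda$ grows, completing the bound.
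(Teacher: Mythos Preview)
Your outline correctly identifies the overall architecture --- apply Lemma \ref{lemma5.1ford} with the parameters of \eqref{parametres}, feed in \eqref{vin}, \eqref{inc}, \eqref{w}, and reduce to a numerical inequality on sub-intervals of $[84,220]$ --- and this is exactly the route the paper takes. But the proposal has a genuine gap at the point where it should actually deliver the constants $8.7979$ and $132.94357$.

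You dispose of the constant by saying that each of $\log C_1,\log C_2,\log C_3,k\log(5r),g^2$ is $O(\lambda^2\log\lambda)$, so the outer constant is $\exp\bigl(O(\log\lambda/\lambda^2)\bigr)$ and ``hence'' bounded by $8.7979$. An $O$-estimate cannot yield a specific numerical bound; the value $8.7979$ is not a soft upper bound with slack but the actual output of an explicit computation, and it is essentially attained near $\lambda=84$. Likewise, the exponent bound $1/(132.94357\lambda^2)$ is the result of evaluating the explicit expression for $E$ on each sub-interval and checking that it is at most $-1/(132.94357\lambda^2)$; ``negligible'' and ``$O(\log\lambda/\lambda^4)$'' do not suffice here, because the target inequality has no asymptotic slack either. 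The paper fixes $\xi=3.612381$, $\sigma=0.330201$, sets $s=\lfloor\sigma h t\rfloor+1$ with $g=\lfloor\lambda/(1-\mu_1)\rfloor+1+a$, $h=\lfloor\lambda/(1-\mu_2)\rfloor-b$, $a,b\in\{0,1\}$, and then runs Program~3 over every sub-interval to extract both numbers; none of that is optional.

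Two further points. First, the partition must be finer than ``$k$ constant'': the paper breaks $[84,220]$ at the jump points of all three of $k$, $m_1=\lfloor\lambda/(1-\mu_1)\rfloor$, $m_2=\lfloor\lambda/(1-\mu_2)\rfloor$, since $H$ depends on $m_1,m_2$ through $Z_0,Z_1$ and is only piecewise linear in $\lambda$. Second, your closing sentence --- that after the critical sub-interval ``the remaining sub-intervals follow by monotonicity'' --- is not justified and is not what the paper does; the dependence of $E'$ and $C$ on $\lambda$ through the several floor functions is not monotone, and the paper verifies every sub-interval directly by computation.
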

\begin{proof}
From Lemma \ref{lemma5.1ford}, \eqref{w}, \eqref{parametres}, \eqref{vin}, \eqref{inc} and \eqref{r} we have 
\begin{equation}\label{ccondw}
\begin{aligned}
S(N, t) & \leq\left(C_3^{\frac{1}{r}}\left(\lambda^{4.65 \lambda} C_1 C_2\right)^{\frac{1}{2 r s}}\right) N^{1+E}+2 N^{0.36}+\frac{1}{k} N^{1-0.0000019476} \\
E & =\frac{\log \left(Y \lambda^2\right)}{ Y \lambda^4}+\frac{1}{2 r s}\left(-H+0.001 \mu_1 k^2+\mu_2 E_2\right).
\end{aligned}
\end{equation}
Now, we choose 
\[\xi=3.612381,\qquad \sigma=0.330201.
\]
Furthermore, taking $s=\lfloor\sigma h t\rfloor+1$, we make the same assumptions as in Lemma 5.3 of \cite{ford_vinogradovs_2002}\begin{equation*}
g=\left\lfloor\frac{\lambda}{1-\mu_1}\right\rfloor+1+a, \quad h=\left\lfloor\frac{\lambda}{1-\mu_2}\right\rfloor-b, \quad t=g-h+1, \quad a, b \in\{0,1\},
\end{equation*}where $g$ satisfies \eqref{g}. Then, we bound the exponent of $N$ in each interval $\lambda \in I=\left[\lambda_1, \lambda_2\right)$ where each of the quantities $m_1=\left\lfloor\frac{\lambda}{1-\mu_1}\right\rfloor, m_2=\left\lfloor\frac{\lambda}{1-\mu_2}\right\rfloor$ and $k$  defined in \eqref{parametres} is constant. We also take constant values of $a$ and $b$ in $I$, so that $g, h, t, s, r$ are also fixed.\\As in \cite{ford_vinogradovs_2002}, we define for $\lambda\in I$ the quantities
\begin{equation*}
\begin{aligned}
H & =Z_0+Z_1 \lambda, \\
Z_0 & =\frac{\left(m_1^2+m_1\right)\left(1-\mu_1\right)+\left(m_2^2+m_2\right)\left(1-\mu_2\right)-h^2+h-\left(1-\mu_1-\mu_2\right)\left(g^2+g\right)}{2}, \\
Z_1 & =h+g-m_1-m_2-1=a-b \in\{-1,0,1\},
\end{aligned}
\end{equation*}so that$$
H \geq H^{\prime}:=Z_0+\left\{\begin{array}{ll}
\lambda_1 & Z_1=1 \\
0 & Z_1=0 \\
-\lambda_2 & Z_1=-1
\end{array}\right..
$$It follows that\begin{equation*}
E \leq \frac{\log \left(Y \lambda_1^2\right)}{Y \lambda_1^4}-\frac{H^{\prime}-0.001 \mu_1 k^2-\mu_2\left(\frac{t(t-1)}{2}+\frac{s^2}{\xi t g^{3 / 2}}+h t e^{-s /(h t)}\right)}{2 r s}:=E^{\prime} .
\end{equation*}
At this point we use Program 3 in Section \ref{sect9} to find the best values for $C$ and $u$, under the condition $u\le 132.94357$, so that, for $\lambda\in I$, $$
S(N, t) \leq C N^{1-1 /\left(u \lambda^2\right)}+\frac{1}{k} N^{1-1 /132 \lambda^2}.
$$ where $u=1 /\left(E^{\prime} \lambda_1^2\right)$ and $C=C_3^{1 / r}\left(\lambda^{4.65 \lambda} C_1 C_2\right)^{1 /(2 r s)}$. Running Program 3 we can notice that in each interval $I$ we have  $C\le 8.7979 $ and $u\le 132.94357$. The conclusion follows.
\end{proof}
\begin{lemma}\label{large}
    For $\lambda\ge 220$  and $N\ge e^{288\lambda^2}$ we have the following estimate
    \[
    S(N,t)\le 7.5N^{1-1/132.94357\lambda^2}.
    \]
\end{lemma}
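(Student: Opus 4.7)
The plan is to follow the same general framework as in the proof of Lemma \ref{medium}, invoking Lemma \ref{lemma5.1ford} together with the parameter choices \eqref{parametres} and \eqref{g} and the master estimate \eqref{ccondw}, but to exploit the extra room available in the range $\lambda\ge 220$ in order to tighten the coefficient from $8.7979$ down to $7.5$ while still guaranteeing the exponent constant $u\le 132.94357$. The analysis naturally splits according to whether $k=\lfloor\lambda/(1-\mu_1-\mu_2)+0.000003\rfloor$ is smaller than $90000$ or not, since Theorem \ref{newboundsvino} supplies different pairs $(\rho,\theta)$ in the two regimes.

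For the finite sub-range $220\le\lambda<\Lambda_0$, where $\Lambda_0$ is the smallest value at which $k$ reaches $90000$, I would mimic the interval-by-interval analysis of Lemma \ref{medium}: partition the $\lambda$-axis into intervals $I=[\lambda_1,\lambda_2)$ on which $m_1$, $m_2$, and $k$ are constant, fix constants $a,b\in\{0,1\}$ on each interval, and run Program 3 to check that the resulting $C=C_3^{1/r}(\lambda^{4.65\lambda}C_1C_2)^{1/(2rs)}$ and $u=1/(E'\lambda_1^2)$ obey $C\le 7.5$ and $u\le 132.94357$. The pair $(\rho,\theta)$ used to estimate $J_{r,k}$ through \eqref{vin} is the one from Theorem \ref{newboundsvino} appropriate to the range of $k$, namely $(3.192950,2.33313)$ when $191\le k\le 339$, $(3.196497,2.24352)$ when $340\le k\le 499$, and $(3.205502,1.77775)$ when $500\le k<90000$.

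For the tail $\lambda\ge\Lambda_0$ (hence $k\ge 90000$), I would use the final case of Theorem \ref{newboundsvino}, with $\rho=3.208630$ and $\theta=2.17720$. Here, rather than iterating Program 3 over infinitely many intervals, I would rely on asymptotic estimates: since $r\sim\rho k^2\sim 7.6\lambda^2$, $s\sim\sigma ht\sim c\lambda^2$ for some explicit $c>0$, and thus $rs\sim\lambda^4$, straightforward bookkeeping yields $\log C_3/r=O((\log\lambda)/\lambda^{1/2})$, $\lambda^{4.65\lambda/(2rs)}=1+O((\log\lambda)/\lambda^3)$, and $\log C_i/(2rs)=O((\log\lambda)/\lambda)$ for $i=1,2$, so that $C\to 1$ as $\lambda\to\infty$. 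A parallel expansion of the leading-order contributions to $H$, $E_2$ and $0.001\mu_1 k^2$ shows that $-E\lambda^2$ converges to a limit strictly greater than $1/132.94357$, securing the exponent bound across the entire tail.

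The hard part will be coordinating the two sub-ranges so that the single target $C\le 7.5$ holds uniformly across the transition at $\Lambda_0$: the intervals immediately below $\Lambda_0$ are the tightest, and verifying the bound there may force a small re-optimization of $\xi$ and $\sigma$ compared to the choice $\xi=3.612381$, $\sigma=0.330201$ used in Lemma \ref{medium}. Once those parameters are tuned, the combination of numerical verification in the finite range and the asymptotic estimates in the tail should deliver the stated bound.
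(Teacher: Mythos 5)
Your overall plan diverges from the paper's: Lemma \ref{large} is not proved by extending the interval-by-interval numerics of Lemma \ref{medium} up to $k=90000$ and then passing to asymptotics. Instead the paper gives a single uniform analytic argument for all $\lambda\ge 220$, modelled on Ford's Lemma 5.2: it fixes $h=\lfloor 1.17928\lambda+\tfrac12\rfloor$, $g=\lfloor 1.24788\lambda+\tfrac12\rfloor$, $\sigma=0.3299$, $s=\lfloor\sigma h(t-1)+1\rfloor$, uses the worst-case $\rho=3.20863$ for every $k\ge 338$ (so no case split at $k=90000$ is needed at all), bounds the constant by $7.5$ via Ford's Lemma 5.2 with $Y=288$ and $D=29.3472$, and shows $\lambda^2E\le 0.0000473+f(\gamma,\phi)/\rho$ with $f(\gamma,\phi)\le -0.024287046496$ on the relevant $(\gamma,\phi)$-box, all error terms ($G_1/\lambda^{1/2}$, $G_2/\lambda$, etc.) being controlled explicitly using $\lambda\ge 220$.

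The genuine gap in your proposal is the tail. The margin in the exponent is essentially zero: $0.024287046496/3.20863\approx 0.0075693$, and after subtracting the error budget $0.0000473$ one lands at $0.0075220$, while $1/132.94357\approx 0.0075220$ to the displayed precision. So your assertion that ``$-E\lambda^2$ converges to a limit strictly greater than $1/132.94357$'' is exactly the point at issue and cannot be waved through; whether it holds depends on the precise limiting values of $\gamma=h/\lambda$ and $\phi=g/\lambda$, which you never specify. If you inherit the Lemma \ref{medium}-style choices $h\approx\lambda/(1-\mu_2)$, $g\approx\lambda/(1-\mu_1)$, you get $\gamma\approx 1.19090$ and $\phi\approx 1.23533$, which are not the optimizing values $\gamma\approx 1.17928$, $\phi\approx 1.24788$; since $f$ is sensitive to these and the slack is of order $10^{-8}$, the bound would then fail. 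Identifying the right linear-in-$\lambda$ choices of $g,h$ (and of $\sigma$) and verifying the resulting inequality with explicit, $\lambda$-uniform remainder bounds is the substance of the lemma, and a convergence statement alone does not ``secure the bound across the entire tail.'' Secondarily, your plan for $220\le\lambda<\Lambda_0$ requires interval-wise verification over roughly $\lambda\le 58400$, i.e.\ hundreds of thousands of intervals, which Program 3 as written does not cover (it rejects $\lambda\ge 300$) and which the paper's uniform argument renders unnecessary.
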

\begin{proof}
From Lemma \ref{lemma5.1ford}, \eqref{w}, \eqref{parametres}, \eqref{vin}, \eqref{inc} and \eqref{r} we have 
\begin{equation}\label{ccondw}
\begin{aligned}
S(N, t) & \leq\left(C_3^{\frac{1}{r}}\left(\lambda^{4.65 \lambda} C_1 C_2\right)^{\frac{1}{2 r s}}\right) N^{1+E}+2 N^{0.36}+\frac{1}{k} N^{1-0.0000019476} \\
E & =\frac{\log \left(Y \lambda^2\right)}{ 7.6Y \lambda^4}+\frac{1}{2 r s}\left(-H+0.001 \mu_1 k^2+\mu_2 E_2\right).
\end{aligned}
\end{equation}
Now, we assume
\begin{equation}\label{condlemma5.2}
h=\left\lfloor 1.17928 \lambda+\frac{1}{2}\right\rfloor, \quad g=\left\lfloor 1.24788 \lambda+\frac{1}{2}\right\rfloor, \quad 
s=\lfloor\sigma h(t-1)+1\rfloor,
\end{equation}where
\begin{equation}\label{sigmalarge}
    \sigma=0.3299,\quad t=g-h+1.
\end{equation}
From \eqref{condphigam}, \eqref{condlemma5.2} and \eqref{sigmalarge}, the relation \eqref{g} holds and, furthermore, we have\begin{equation*}
|\gamma-1.17928| \leq \frac{1}{2\lambda}, \quad|\phi-1.24788| \leq \frac{1}{2\lambda}.
\end{equation*}From \eqref{parametres}, \eqref{condlemma5.2} and \eqref{sigmalarge} we have\begin{equation*}
g \geq 275, \quad h \geq 259, \quad t \geq 17\ge 13, \quad k \geq 338, \quad s \geq 0.02668 \lambda^2\ge 0.02294\lambda^2 .
\end{equation*}
Following exactly the proof of Lemma 5.2 in \cite{ford_vinogradovs_2002}, with $Y=288$ instead of $Y=300$, with the adjusted value of $D=0.1090Y=29.3472$ instead of $D=30.57$, we still have\begin{equation*}
C_3^{1 / r}\left(\lambda^{4.65 \lambda} C_1 C_2\right)^{1 / 2 r s}\le 7.5. 
\end{equation*}It remains to estimate $E$. We have
\begin{equation*}
\begin{aligned}
E \leq & \frac{\log \left(Y \lambda^2\right)}{7.6 Y \lambda^4}+\frac{-H+0.001 \mu_1 k^2}{2.00002 \rho \sigma \gamma(\phi-\gamma) \lambda^2 k^2}+\frac{\mu_2 E_2}{2 \rho k^2 s} \\
\leq & \frac{1.52 \times 10^{-7}}{\lambda^2}+\frac{-\lambda^2 H_2-\lambda H_1+H_0}{2.00002 \rho \sigma \gamma(\phi-\gamma) \lambda^2 k^2}+\frac{0.001 \mu_1}{2.00002 \rho \sigma \gamma(\phi-\gamma) \lambda^2} \\
& +\frac{\mu_2}{2 \rho k^2}\left[\frac{\phi-\gamma+1 / \lambda}{2 \sigma \gamma}+\frac{(t /(t-1)) e^{-\sigma+\sigma / t}}{\sigma}+\frac{\sigma h g^{-3 / 2}}{12}\right] .
\end{aligned}
\end{equation*}Following Ford's argument in \cite{ford_vinogradovs_2002}, we have\begin{equation*}
\lambda^2 E \leq 1.52 \times 10^{-7}+\frac{f(\gamma, \phi)+G_1 / \lambda^{1 / 2}+G_2 / \lambda}{\rho}
\end{equation*}where \begin{equation*}
\begin{aligned}
f(\gamma, \phi) & =\frac{1}{2.00002 \sigma \gamma}\left[\frac{0.001 \mu_1}{\phi-\gamma}+\frac{1}{k_1^2}\left(\frac{-H_2}{\phi-\gamma}+1.00001 \mu_2\left(\frac{1}{2}(\phi-\gamma)+\gamma e^{-\sigma}\right)\right)\right], \\
G_1 & =\frac{\mu_2 \sigma \gamma \phi^{-3 / 2}}{24 k_0^2} \leq 0.0008, \\
G_2 & =\frac{1}{2.00002 \sigma(k / \lambda)^2}\left[\frac{-H_1+H_0 / \lambda+1.33547 \mu_2 \gamma e^{-\sigma}}{\gamma(\phi-\gamma)}+\frac{1.00001 \mu_2}{2 \gamma}\right] .
\end{aligned}
\end{equation*}
Using Ford's estimate for the expression inside the brackets in the definition of $G_2$, we have \begin{equation*}
G_2 \le \frac{0.0392}{2.00002 \sigma \gamma k_0^2} \le 0.0213552.
\end{equation*}It follows that
\begin{equation*}
\begin{aligned}
\lambda^2 E & \le 1.56 \times 10^{-7}+\frac{f(\gamma, \phi)+0.0008 \lambda^{-1 / 2}+0.0213552 \lambda^{-1}}{\rho} \\
& \le 0.0000473+\frac{f(\gamma, \phi)}{\rho} .
\end{aligned}
\end{equation*}In the range $$|\phi-1.24788| \leq \frac{1}{440},\qquad|\gamma-1.17928| \leq \frac{1}{440},$$ $f(\gamma,\phi)$ is increasing, hence the  maximum occurring at $\gamma=1.17928+\frac{1}{440}, \phi=1.24788-\frac{1}{440}$, with
$$
f(\gamma, \phi) \leq-0.024287046496.
$$
Using $\rho=3.20863$ from Theorem \ref{newboundsvino}, it follows that
$$
\lambda^2 E \le-\frac{1}{132.94357}.
$$
\end{proof}
Theorem \ref{theorem1} for $\lambda\ge 84$ follows directly from Lemma \ref{medium} and Lemma \ref{large}.
\begin{remark}
    One might have considered the ranges $84\le \lambda\le N$ and $\lambda> N$, with $N>220$. However, a direct computation shows that even for really large values of $N$, of the order of $10^6$, and hence, by definition of $k$, for values of $k$ much greater than $90000$, the improvement is negligible. Indeed, one can see that a new choice for $N$ would not influence the estimate for $S(N,t)$ in the range $84\le \lambda\le N$ we found in Lemma \ref{medium}, while for the case $\lambda>N$, if $N$, and hence $k$, would be sufficiently large, the value of $\rho$ in Theorem \ref{newboundsvino} would be $3.20861$ for $k$ sufficiently large, which is slightly smaller that $\rho=3.20863$ we found for $k\ge 90000$. However, a direct computation shows that this really small improvment on $\rho$ for $k$ sufficiently large would not lead to an improvement in Lemma \ref{large}, when $\lambda>N$ and $N$ sufficently large.
\end{remark}
 \subsection{Case $\lambda\le 84$} 
For $\lambda\le 84$, the values for the $C$ constant found in \cite{ford_vinogradovs_2002} hold, both for $1\le \lambda\le2.6$ and $2.6\le \lambda\le 84$. Indeed, for the range $2.6\le \lambda\le 84$, the same values for $C$ still hold under the new constraint that $S(N,t)\le CN^{1-1/u\lambda^2}$ with $u\le 132.94357$.  We can notice that the maximum is reached inside the interval $\lambda\in[83,84]$, where $C=8.7979$. This concludes the proof of Theorem \ref{theorem1} also for $\lambda\le 84$.
\section{Proof of Theorem \ref{theorem2}}
 First of all, since $\zeta(\bar{s}, u)=\overline{\zeta(s, u)}$ and $\zeta(s)=\zeta(s,1)$, we restrict our attention to $s$ lying in the upper half-plane. Then, we consider separately the cases $\sigma\leq\frac{15}{16}$ or $t\leq 10^{108}$ or $t\geq 10^{108}$ and $\frac{15}{16}\geq \sigma\geq 1$. The main contribution will come from the case $t\geq 10^{108}$ and $\frac{15}{16}\geq \sigma\geq 1$.
\subsection{Case $\frac{15}{16}\leq\sigma\leq 1,\ t\geq 10^{108}$}Using Lemma \ref{lemma73} with the values $C=8.7979$ and $D=132.94357$ found in Theorem \ref{theorem1} and $t\ge 10^{108}$, we have\[\left(\frac{C+1+10^{-80}}{\log ^{2 / 3} t}+1.569 C D^{1 / 3}\right)\le \left(\frac{C+1+10^{-80}}{\log ^{2 / 3} (10^{108})}+1.569 C D^{1 / 3}\right)\le 70.6995. \]
It follows that \[
    |\zeta(\sigma+it)|\leq 70.6995 t^{4.43795(1-\sigma)^{3/2}}\log^{2/3}t
    \]
    and
    \[
     \left|\zeta(s, u)-u^{-s}\right| \leq70.6995 t^{4.43795(1-\sigma)^{3/2}}\log^{2/3}t.
    \]
    \subsection{Case $\frac{15}{16}\leq\sigma\leq 1,\ 3\leq t\leq 10^{108}$}Following the proof of Lemma \ref{lemma7.1} in \cite{ford_vinogradovs_2002}, one has\[\left|\zeta(s, u)-u^{-s}\right| \leq(t+3 / 2)^{1-\sigma}(1+1 / t+\log (2 t+1))\]for $\frac{15}{16}\le \sigma\le 1$ and $3\le t\le 10^{108}$.\\If $3\le t\le 10^6$, from \cite{ford_vinogradovs_2002} we know that \[\left|\zeta(s, u)-u^{-s}\right| \leq36.8.\] If $10^6\le t\le 10^{108}$, from the argument of Lemma \ref{lemma7.1} in \cite{ford_vinogradovs_2002}, one gets
    \begin{equation*}
        \begin{aligned}
           \left|\zeta(s, u)-u^{-s}\right|&\leq(t+3 / 2)^{1-\sigma}(1+1 / t+\log (2 t+1)) \\&\leq 1.123 t^{1-\sigma} \log t\\&=1.123\left(t^{4(1-\sigma)^{3 / 2}} \log ^{2 / 3} t\right)\left(t^{1-\sigma-4(1-\sigma)^{3 / 2}} \log ^{1 / 3} t\right)\\&\le 1.123\left(t^{4(1-\sigma)^{3 / 2}} \log ^{2 / 3} t\right)\left(t^{\frac{1}{108}} \log ^{1 / 3} t\right)\\&\leq 70.6199 t^{4(1-\sigma)^{3/2}}\log^{2/3}t\\&\leq70.6995 t^{4.43795(1-\sigma)^{3/2}}\log^{2/3}t.
        \end{aligned}
    \end{equation*}Hence, we also have
    \[
|\zeta(\sigma+it)|\le 70.6199 t^{4(1-\sigma)^{3/2}}\log^{2/3}t\leq70.6995 t^{4.43795(1-\sigma)^{3/2}}\log^{2/3}t.
    \]
   \begin{remark}
     One can notice that the estimate found for $\zeta(s)$ and $\zeta(s,u)$ in this range is much better than that one found for $\frac{15}{16}\le \sigma\le 1$ and $t\ge 10^{108}$. Indeed, the $B$ constant we found in this current case is just $4$, which is less than the final value $4.43795$.
   \end{remark} 
    \subsection{Case $\frac{1}{2}\leq\sigma\leq \frac{15}{16},\ t\geq 3$} From the proof of Lemma \ref{lemma7.1} in \cite{ford_vinogradovs_2002} we have \begin{equation}\label{smallestbound}
        |\zeta(\sigma +it)|\leq 21.3 t^{4(1-\sigma)^{3/2}}\leq70.6995 t^{4.43795(1-\sigma)^{3/2}}\log^{2/3}t.
    \end{equation}
\begin{remark}
The choice of $\frac{15}{16}$ comes from the proof of Lemma 7.1 in \cite{ford_vinogradovs_2002}, where $(1-\sigma)\leq 4(1-\sigma)^{3/2}$ for $\sigma\leq \frac{15}{16}$.\\With $(1-\sigma)\leq 4.4(1-\sigma)^{3/2}$ one should get $\sigma\leq \frac{459}{484}$. However, the influence of this second choice on the $A$ constant is negligible and it does not lead to any further improvements on $A$.
\end{remark}
\section{Proof of Theorem \ref{theorem6}}
For $t\le \exp(463388)$ we use the best known explicit Littlewood zero-free region due to Yang in \cite{yang2023explicit}. For $t\ge \exp(463388)$ we follow the argument used to prove Theorem 1.1 in \cite{mossinghoff_explicit_2022} which relies on a non-negative trigonometric polynomial $P_{40}(x)$ defined by \eqref{deftrpol} with degree 40 having
$$
b_0=1, \quad b_1=1.74600190914994, \quad b=\sum_{k=1}^{40} b_k=3.56453965437134.
$$Using  the new values $A=70.6995$ and $B=4.43795$ found in Theorem \ref{theorem2}  and making the following new assumptions
\[
T_0:=\exp(463388),\qquad M_1:=0.050007,\qquad E=1.8008278,\qquad R=468
\]in \cite{mossinghoff_explicit_2022},
the proof of Theorem \ref{theorem6} is complete.

\section{Proof of Theorem \ref{theoremasimp}}
The proof is exactly the same as in \cite{mossinghoff_explicit_2022}, except for the new value $B=4.43795$ found in Theorem \ref{theorem2}, and relies on a non-negative polynomial $P_{46}(x)$ with degree 46 where
$$
b_0=1, \quad b_1=1.74708744081848, \quad b=\sum_{k=1}^{46} b_k=3.57440943022073.
$$
\section{Some possible improvements on $A$}
In this section we give some suggestions for some possible improvements on the constant $A$ in Theorem \ref{theorem2}. We will provide some quite detailed proofs of some useful lemmas to help the reader follow the argument more easily.\\

Instead of the definition \eqref{Snt} for $S(N,t)$, one could consider the following sum\begin{equation}\label{ms}
    \Tilde{S}(N, t) := \max_{0 < u \le 1}\max_{N < R \le mN}\left|\sum_{N \le n \le R}\frac{1}{(n + u)^{it}}\right|
\end{equation}
with $1<m\leq 2$. For $\lambda\ge 84$, we can find sharper estimates.
\begin{theorem}\label{theorem3}
    For $1.001\le m\le 2$ and $\lambda\ge 84$ we have:
    \[
    \Tilde{S}(N,t)\le (m-1)8.7979N^{1-1/132.94357\lambda^2}.
    \]
\end{theorem}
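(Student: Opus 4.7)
The plan is to adapt the proof of Theorem \ref{theorem1} in the range $\lambda \ge 84$, tracking carefully how the length $L := (m-1)N$ of the shorter interval enters each ingredient. For $N \le e^{288\lambda^2}$ the bound is trivial:
\[
\tilde S(N,t) \le L \le (m-1)\,e^{288/132.94357}\, N^{1-1/(132.94357\lambda^2)},
\]
which lies comfortably inside the claimed estimate. Hence from now on we may assume $N \ge e^{288\lambda^2}$, exactly as in the proof of Theorem \ref{theorem1}.

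The next step is to generalize Lemma \ref{lemma5.1ford} to sums $\sum_{N \le n \le R}(n+u)^{-it}$ with $R - N \le L$. Inspection of its derivation shows that the factor $N$ standing in front of the term $(M_2/|\mathscr{B}|)^{1/r}(\ldots)^{1/(2rs)}$ arises from the length of the outer sum in the $k$-th derivative test, and is therefore replaced by $L$ when the sum is shorter; the two summands $2M_1M_2$ and $\frac{t(M_1 M_2)^{k+1}}{(k+1)N^k}$ survive essentially unchanged. With this generalization in hand, the proofs of Lemma \ref{medium} and Lemma \ref{large} can be rerun verbatim with the parameter choices \eqref{parametres}, \eqref{g}. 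Because the Vinogradov integral bound \eqref{vin}, the incomplete-system bound \eqref{inc}, and the estimate \eqref{w} for $W_h\cdots W_g$ depend only on $N$, $t$ and the chosen parameters (not on the length of the outer sum), the same numerical verification as in Section \ref{sect4} yields
\[
\tilde S(N,t) \le L \cdot C_3^{1/r}(\lambda^{4.65\lambda} C_1 C_2)^{1/(2rs)} N^{-1/(132.94357\lambda^2)} + 2M_1M_2 + \frac{t(M_1 M_2)^{k+1}}{(k+1)N^k},
\]
and from $C_3^{1/r}(\lambda^{4.65\lambda}C_1 C_2)^{1/(2rs)} \le 8.7979$, already established in Lemma \ref{medium} and Lemma \ref{large}, the bound $\tilde S(N,t) \le (m-1)\cdot 8.7979\,N^{1-1/(132.94357\lambda^2)}$ follows.

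The main obstacle is verifying that the two auxiliary error terms remain within the slack of the constant $8.7979$. The first, $2M_1 M_2 \le 2N^{\mu_1+\mu_2} \le 2N^{0.36}$, is negligible compared with $(m-1)N^{1-1/(132.94357\lambda^2)}$ precisely because of the hypothesis $m-1 \ge 10^{-3}$; the second decays super-polynomially in $\lambda$ under $N \ge e^{288\lambda^2}$, again as in the proof of Theorem \ref{theorem1}. Thus no fundamentally new estimates are required: the argument reduces to the same computations that drove Lemma \ref{medium} and Lemma \ref{large}, with $L$ in place of $N$ wherever the length of the outer sum enters.
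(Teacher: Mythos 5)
Your overall strategy is the paper's: generalize Lemma \ref{lemma5.1ford} so that the length $(m-1)N$ of the summation range replaces $N$ in front of the main term, reuse the numerical work of Lemma \ref{medium} and Lemma \ref{large} verbatim, and absorb the leftover error terms using the hypothesis $m-1\ge 10^{-3}$. The trivial range $N\le e^{288\lambda^2}$ and the term $2M_1M_2\le 2N^{0.36}$ are handled exactly as in the paper and are fine.

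There is, however, one genuine gap: you assert that the Taylor-remainder term $\frac{t(M_1M_2)^{k+1}}{(k+1)N^k}$ ``survives essentially unchanged'' and is then negligible because it ``decays super-polynomially in $\lambda$.'' Neither half of this is correct. In the derivation that term arises by summing the per-$n$ error $\frac{t}{k+1}\bigl(ab/(n+u)\bigr)^{k+1}$ over the at most $(m-1)N$ integers $n\in(N,R]$, so the correct generalization (the paper's Lemma \ref{lemma1}) carries the factor $m-1$ on this term as well, yielding $\frac{(m-1)t(M_1M_2)^{k+1}}{(k+1)N^k}$. If you omit that factor, the term is not within the slack: with the parameters \eqref{parametres} it is bounded by $\frac{1}{k}N^{1-0.0000019476}$, and at the critical corner $\lambda=84$, $N=e^{288\cdot 84^2}$, $k=129$, one has $N^{-0.0000019476+1/(132.94357\cdot 84^2)}=N^{-8.8\times 10^{-7}}\approx e^{-1.79}\approx 0.17$, so the term is about $0.0013\,N^{1-1/(132.94357\lambda^2)}$, whereas the entire budget for $m=1.001$ is $(m-1)\cdot 8.7979\,N^{1-1/(132.94357\lambda^2)}\approx 0.0088\,N^{1-1/(132.94357\lambda^2)}$. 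The unfactored remainder term alone therefore consumes roughly $15\%$ of the budget and would push the constant up to about $10$, not $8.7979$. The super-polynomial decay you invoke only helps for large $\lambda$; near $\lambda=84$ the exponent deficit $0.0000019476-1/(132.94357\lambda^2)$ is about $8.8\times 10^{-7}$, and $288\lambda^2$ times that is only about $1.8$. Once you restore the factor $m-1$ on the remainder term, as the paper's Lemma \ref{lemma1} does, the rest of your argument goes through as described.
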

If one could find a bound of the form $\Tilde{S}(N,t)\le (m-1)cN^{1-1/(u\lambda^2)}$ also for $\lambda\le 84$, then the constant $A$ in Theorem \ref{theorem2} might be reduced to around $49$. We briefly outline the proof of Theorem \ref{theorem3}.\\
We start with a preliminary lemma that is a more general version of Lemma \ref{lemma5.1ford}. 
\begin{lemma}\label{lemma1}Suppose $k, r$ and $s$ are integers $\geq 2$, and $h$ and $g$ are integers satisfying $1 \leq h \leq g \leq k$. Let $N$ be a positive integer, and $M_1, M_2$ be real numbers with $1 \leq M_i \leq N$. Let $\mathscr{B}$ be a nonempty subset of the positive integers $\leq M_2$. Then\begin{align*}
    &\Tilde{S}(N,t)\leq 2M_1 M_2+\frac{(m-1)t(M_1 M_2)^{k+1}}{(k+1)N^k}\\&+(m-1)N\left(\frac{M_2}{|\mathscr{B}|}\right)^{1/r}\left((5r)^kM_2^{-2s}\lfloor M_1\rfloor ^{-2r+k(k+1)/2}J_{r,k}(\lfloor M_1\rfloor)J_{s,g,h}(\mathscr{B})W_h\dots W_g\right)^{1/2rs},
\end{align*}
    where
$$
W_j=\min \left(2 s M_2^j, \frac{2 s M_2^j}{r\left\lfloor M_1\right\rfloor^j}+\frac{s t M_2^j}{\pi j N^j}+\frac{4 \pi j(2 N)^j}{r t\left\lfloor M_1\right\rfloor^j}+2\right) \quad(j \geq 1) 
$$and $\Tilde{S}(N,t)$ is defined in \eqref{ms}.
\end{lemma}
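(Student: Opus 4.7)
The plan is to adapt Ford's proof of Lemma \ref{lemma5.1ford} (Lemma 5.1 of \cite{ford_vinogradovs_2002}) essentially verbatim, carefully tracking the dependence on the length of the summation interval. Ford's original argument assumes $R \leq 2N$, so that the number of terms in $\sum_{N < n \leq R}(n+u)^{-it}$ is at most $N$; in our setting $R \leq mN$, this length is at most $(m-1)N$, and the factor $(m-1)$ in the statement should emerge precisely from this observation.

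Concretely, first I would decompose the sum as Ford does: split the range $(N,R]$ according to residues modulo $M_2$, write each inner sum as an exponential sum $\sum e(-t\log(n+u)/(2\pi))$, and approximate the logarithm by a degree-$k$ Taylor polynomial. The error from truncating the Taylor expansion contributes $t(M_1M_2)^{k+1}/((k+1)N^k)$ per outer term, and the number of outer terms is at most $(m-1)N/(M_1M_2)$ instead of $N/(M_1M_2)$; this yields the factor $(m-1)$ in the second term of the bound. The trivial piece $2M_1M_2$ does not pick up a factor of $(m-1)$ since it arises from bounding the boundary contribution independent of $R-N$.

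Next, I would apply Hölder's inequality and the standard Vinogradov machinery to reduce the resulting expression to a product of a Vinogradov mean value $J_{r,k}(\lfloor M_1\rfloor)$, an incomplete system $J_{s,g,h}(\mathscr{B})$, and the completion-of-sums quantities $W_h,\dots,W_g$. Here again the outer factor $N$ becomes $(m-1)N$, because the number of residue classes of $(N,R]$ modulo $M_2$ is bounded by $(m-1)N/M_2$ rather than $N/M_2$. The quantities $W_j$ themselves depend only on $s,r,M_1,M_2,N,t$ and not on $R$, so they are unchanged; likewise $J_{r,k}$ and $J_{s,g,h}(\mathscr{B})$ are intrinsic to $M_1$ and $\mathscr{B}$ and do not see $m$.

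The main technical obstacle will be the bookkeeping: making sure every occurrence of the factor $N$ coming from the outer summation length is consistently replaced by $(m-1)N$, while every occurrence of $N$ that comes from the modulus structure or from the Taylor expansion denominators is left alone. Once this is verified, taking the maximum over $R \in (N,mN]$ at the end yields precisely the stated bound. No new analytical input beyond Ford's original argument is required; the result is a uniform strengthening that exploits the shorter summation range.
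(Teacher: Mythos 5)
Your proposal is correct and is essentially the paper's own proof: the paper likewise reruns Ford's Lemma 5.1 argument, observing that after averaging over the shifts $n \mapsto n+ab$ the sum over $n$ runs over at most $(m-1)N$ integers in $(N,R-1]$, which turns the factor $N$ in front of the exponential sum $U$ (and in the Taylor-truncation error) into $(m-1)N$, while the boundary term $2M_1M_2$ is unaffected; the H\"older/Vinogradov machinery producing $J_{r,k}$, $J_{s,g,h}(\mathscr{B})$ and the $W_j$ is then quoted from Ford unchanged. One minor slip in your accounting: the truncation error is at most $t(M_1M_2/N)^{k+1}/(k+1)$ per value of $n$ and there are at most $(m-1)N$ such values (not $(m-1)N/(M_1M_2)$ outer terms each contributing $t(M_1M_2)^{k+1}/((k+1)N^k)$), so your product as literally written does not reproduce the second term, although the mechanism you identify is the right one.
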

\begin{proof}
We define $M=\left\lfloor M_1\right\rfloor$. For $N<R \leq m N$, $1< m\le 2$ and $0<u \leq 1$, we have
\begin{equation*}
    \begin{aligned}
        \left|\sum_{N<n \leq R}(n+u)^{-i t}\right|&=\frac{1}{M|\mathscr{B}|}\left|\sum_{\substack{a \leq M_1 \\
b \in \mathscr{B}}} \sum_{N<n+a b \leq R}(n+a b+u)^{-i t}\right| \\&
\leq \frac{1}{M|\mathscr{B}|}\left|\sum_{\substack{a \leq M_1 \\
b \in \mathscr{B}}} \sum_{\substack{N<n \leq R-1}}(n+a b+u)^{-i t}\right|+\frac{1}{M|\mathscr{B}|} \sum_{\substack{a \leq M_1 \\
b \in \mathscr{B}}}(2 a b-1) \\&
\leq \frac{N}{M|\mathscr{B}|} \max _{N \leq z \leq m N}\left|\sum_{\substack{a \leq M_1 \\
b \in \mathscr{B}}} e^{-i t \log (1+a b / z)}\right|+2 M_1 M_2.
    \end{aligned}
\end{equation*}
For $0 \leq x \leq 1$ we have
$$
\left|\log (1+x)-\left(x-x^2 / 2+\cdots+(-1)^{k-1} x^k / k\right)\right| \leq \frac{x^{k+1}}{k+1}.
$$
Also $\left|e^{i y}-1\right| \leq y$ for real $y$ and $a b / z \leq M_1 M_2 / N$. Thus, for some $z \in[N, m N]$,
$$
\Tilde{S}(N, t) \leq \frac{(m-1)N}{M|\mathscr{B}|}|U|+\frac{t(m-1)\left(M_1 M_2\right)^{k+1}}{(k+1) N^k}+2 M_1 M_2
$$
where $U=\sum_{a, b} e\left(\gamma_1(a b)+\cdots+\gamma_k(a b)^k\right)$ and $\gamma_j=(-1)^j t /\left(2 \pi j z^j\right)$.\\At this point, from Ford's argument in Lemma 5.1 of \cite{ford_vinogradovs_2002}, we have
\[
\frac{N}{M|\mathscr{B}|}|U|\le N\left(\frac{M_2}{|\mathscr{B}|}\right)^{\frac{1}{r}}\left((5 r)^k M_2^{-2 s}\left\lfloor M_1\right\rfloor^{-2 r+\frac{1}{2} k(k+1)} J_{r, k}\left(\left\lfloor M_1\right\rfloor\right) J_{s, g, h}(\mathscr{B}) W_h \cdots W_g\right)^{\frac{1}{2 r s}}.
\]
The conclusion follows.
\end{proof}
At this point, from Ford's argument in section $5$ of \cite{ford_vinogradovs_2002} we have the following bounds:
\[
\frac{t(M_1 M_2)^{k+1}}{(k+1)N^k}\le \frac{1}{k} N^{1-0.0000019476},
\]
\[
2M_1M_2\le 2 N^{0.36}
\]
and 
\begin{equation*}
\begin{aligned}
    &N\left(\frac{M_2}{|\mathscr{B}|}\right)^{1/r}\left((5r)^kM_2^{-2s}\lfloor M_1\rfloor ^{-2r+k(k+1)/2}J_{r,k}(\lfloor M_1\rfloor)J_{s,g,h}(\mathscr{B})W_h\dots W_g\right)^{1/2rs}\\&\le \left(C_3^{\frac{1}{r}}\left(\lambda^{5 \lambda} C_1 C_2\right)^{\frac{1}{2 r s}}\right) N^{1+E}
    \end{aligned}
\end{equation*}
where$$
E=\frac{\log \left(Y \lambda^2\right)}{7.6 Y \lambda^4}+\frac{1}{2 r s}\left(-H+0.001 \mu_1 k^2+\mu_2 E_2\right)
$$
and $C_1,C_2,C_3$ are the same as for Theorem \ref{theorem1}. If we use these bounds in Lemma \ref{lemma1} we get
\begin{equation}\label{eqfin}
\Tilde{S}(N, t) \leq(m-1)\left(C_3^{\frac{1}{r}}\left(\lambda^{4.65 \lambda} C_1 C_2\right)^{\frac{1}{2 r s}}\right) N^{1+E}+2 N^{0.36}+\frac{(m-1)}{k} N^{1-0.0000019476}.
\end{equation}
Now, we consider $N\ge e^{288\lambda^2}$, since otherwise we have trivially
\[
\Tilde{S}(N,t)\le (m-1)N\le (m-1)e^{288/132.94357}N^{1-1/132.94357\lambda^2}\le (m-1)8.7979N^{1-1/132.94357\lambda^2} .
\]
\begin{lemma}\label{mediumm}
    For $84\le\lambda\le 220$ and $N\ge e^{288\lambda^2}$ we have the following estimate:
    \[
    \Tilde{S}(N,t)\le (m-1)8.797901N^{1-1/132.94357\lambda^2}.
    \]
\end{lemma}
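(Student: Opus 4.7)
The plan is to mirror the proof of Lemma \ref{medium} verbatim, with Lemma \ref{lemma1} replacing Lemma \ref{lemma5.1ford} as the starting point, and with careful attention paid to the fact that the factor $(m-1)$ does not appear in front of the $2 M_1 M_2$ term in Lemma \ref{lemma1}. The starting inequality will be \eqref{eqfin}, namely
\begin{equation*}
\widetilde{S}(N,t) \le (m-1)\Bigl(C_3^{1/r}\bigl(\lambda^{4.65\lambda}C_1C_2\bigr)^{1/(2rs)}\Bigr) N^{1+E} + 2 N^{0.36} + \frac{m-1}{k}\, N^{1-0.0000019476},
\end{equation*}
with $E$, $C_1$, $C_2$, $C_3$ exactly as in the proof of Lemma \ref{medium}.

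First I would fix the near-optimal parameter choices used in Lemma \ref{medium}: $\mu_1=0.1905$, $\mu_2=0.1603$, $k=\lfloor \lambda/(1-\mu_1-\mu_2)+0.000003\rfloor$, $r=\lfloor\rho k^2\rfloor+1$ with $\rho$ from Theorem \ref{newboundsvino}, $\xi=3.612381$, $\sigma=0.330201$, and $g, h, t, s$ as specified there. Theorem \ref{newboundsvino} controls the complete Vinogradov integral through \eqref{vin}, and Theorem \ref{th4ford} controls the incomplete one through \eqref{inc}; neither depends on $m$, so these inputs transfer unchanged. Then I would partition $[84,220]$ into the same intervals $I=[\lambda_1,\lambda_2)$ on which $m_1=\lfloor\lambda/(1-\mu_1)\rfloor$, $m_2=\lfloor\lambda/(1-\mu_2)\rfloor$, $k$, $a$, $b$ are constant, and bound $E\le E'$ exactly as in Lemma \ref{medium}. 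Running Program 3 (unchanged, since it examines the same quantities $C=C_3^{1/r}(\lambda^{4.65\lambda}C_1C_2)^{1/(2rs)}$ and $u=1/(E'\lambda_1^2)$) yields $C\le 8.7979$ and $u\le 132.94357$ in every interval $I$.

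To finish, I need to absorb the two extra terms $2N^{0.36}$ and $\frac{m-1}{k}N^{1-0.0000019476}$ into the main term at the cost of passing from $8.7979$ to $8.797901$. Since $N\ge e^{288\lambda^2}\ge e^{288\cdot 84^2}$ and $m-1\ge 0.001$ (by the hypothesis $1.001\le m\le 2$ of Theorem \ref{theorem3}), the ratio
\begin{equation*}
\frac{(m-1)\, 8.7979\, N^{1-1/(132.94357\lambda^2)}}{2N^{0.36}} \ge \frac{0.001\cdot 8.7979}{2}\, N^{0.64 - 1/(132.94357\cdot 84^2)}
\end{equation*}
is astronomical, so the $2N^{0.36}$ term is swallowed by any positive slack in the leading constant; similarly for the $\frac{m-1}{k}N^{1-0.0000019476}$ term, since $1-0.0000019476<1-1/(132.94357\lambda^2)$ for $\lambda\ge 84$. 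Choosing the slack $8.797901-8.7979=10^{-6}$ is more than sufficient.

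The main obstacle I anticipate is purely bookkeeping: one must check that absorbing $2N^{0.36}$ into $(m-1)\cdot 8.797901\cdot N^{1-1/(132.94357\lambda^2)}$ still works at the smallest allowed $m$ (namely $m=1.001$), because $(m-1)$ can be as small as $10^{-3}$ while $2N^{0.36}$ carries no such prefactor. The hypothesis $N\ge e^{288\lambda^2}$ makes this trivially true for $\lambda\ge 84$, but one should verify it once at the boundary to confirm that no tightening of the constant $288$ or of the slack $10^{-6}$ is required. Everything else is an immediate transcription of the argument for Lemma \ref{medium}.
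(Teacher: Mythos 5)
Your proposal is correct and follows essentially the same route as the paper: the paper likewise starts from \eqref{eqfin}, imports the main-term bound $C_3^{1/r}(\lambda^{4.65\lambda}C_1C_2)^{1/(2rs)}N^{1+E}\le 8.7979\,N^{1-1/(132.94357\lambda^2)}$ from Lemma \ref{medium}, and absorbs the $2N^{0.36}$ and $\frac{m-1}{k}N^{1-0.0000019476}$ terms using $N\ge e^{288\lambda^2}$ and $m-1=10^{-3}$, exactly as you describe. The only cosmetic difference is that the paper quotes Lemma \ref{medium}'s conclusion directly rather than rerunning Program 3.
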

\begin{proof}
From Lemma \ref{medium} we know that 
\[
\left(C_3^{\frac{1}{r}}\left(\lambda^{4.65 \lambda} C_1 C_2\right)^{\frac{1}{2 r s}}\right) N^{1+E}\le 8.7979N^{1-1/132.94357\lambda^2}.
\]
Using this result in \eqref{eqfin} we get
\begin{equation*}
\begin{aligned}
\Tilde{S}(N, t) &\leq(m-1)8.7979N^{1-1/132.94357\lambda^2}+2 N^{0.36}+\frac{(m-1)}{k} N^{1-0.0000019476}\\& =(m-1)\left(8.7979N^{1-1/(132.94357\lambda^2)}+\frac{2N^{0.36}}{m-1}+\frac{1}{k+1}N^{1-1/132\lambda^2}\right)\\&\leq (m-1)8.797901N^{1-1/(132.94357\lambda^2)}
\end{aligned}
\end{equation*}
where the last passage comes from the fact that, for $m=1.001$, we have \[
(m-1)\cdot 10^{-7}\cdot8.7979N^{1-1/(132.94357\lambda^2)}\geq 2N^{0.36}.
\]Indeed, for $84\le\lambda\le 220$ and $N\ge e^{288\lambda^2}$, we have
\begin{equation*}
\begin{aligned}
    N^{1-1/(132.94357\lambda^2)-0.36}&\ge e^{288\lambda^2(1-1/(132.94357\lambda^2)-0.36)}\\&\ge e^{288\cdot 84^2(1-1/(132.94357\cdot 84^2)-0.36)} \\&\ge \frac{2}{8.7979\cdot 10^{-7}\cdot(m-1)}
\end{aligned}
\end{equation*}

if
\[
m-1\ge \frac{2\cdot 10^7}{8.7979}\cdot e^{-288\cdot 84^2(1-1/(132.94357\cdot 84^2)-0.36)}\ge 10^{-560000},
\]
which holds, since with our choice $m-1=10^{-3}$.
\end{proof}
\begin{lemma}\label{largem}
 For $\lambda\ge 220$ and $N\ge e^{288\lambda^2}$ we have the following estimate:
    \[
    \Tilde{S}(N,t)\le (m-1)7.5001N^{1-1/132.94357\lambda^2}.
    \]
\end{lemma}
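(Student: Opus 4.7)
The plan is to follow the argument of Lemma \ref{mediumm} verbatim, replacing the input bound $8.7979$ from Lemma \ref{medium} with the sharper bound $7.5$ from Lemma \ref{large}. First, for $\lambda\ge 220$ and $N\ge e^{288\lambda^2}$, Lemma \ref{large} gives
\[
\Bigl(C_3^{1/r}\bigl(\lambda^{4.65\lambda}C_1 C_2\bigr)^{1/(2rs)}\Bigr)N^{1+E}\le 7.5\,N^{1-1/(132.94357\lambda^2)}.
\]
Feeding this into the general estimate \eqref{eqfin} coming from Lemma \ref{lemma1} yields
\[
\Tilde{S}(N,t)\le (m-1)\cdot 7.5\,N^{1-1/(132.94357\lambda^2)}+2N^{0.36}+\frac{m-1}{k}N^{1-0.0000019476},
\]
so the task reduces to absorbing the last two residual terms into a tiny increase of the leading constant from $7.5$ to $7.5001$.

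The two residual terms are handled exactly as in Lemma \ref{mediumm}. For $\lambda\ge 220$ one has $k\ge 338$ by \eqref{parametres}, and moreover $1/(132.94357\lambda^2)\le 1/(132.94357\cdot 220^2)<0.0000019476$, so $N^{1-0.0000019476}\le N^{1-1/(132.94357\lambda^2)}$ and the third summand is bounded by $\frac{m-1}{338}N^{1-1/(132.94357\lambda^2)}$, comfortably below $10^{-4}(m-1)N^{1-1/(132.94357\lambda^2)}$. For the middle term, since $N\ge e^{288\lambda^2}\ge e^{288\cdot 220^2}$ the quantity $N^{1-1/(132.94357\lambda^2)-0.36}$ is astronomically large, so $2N^{0.36}$ is dwarfed by any fixed positive fraction of $(m-1)\cdot 7.5\,N^{1-1/(132.94357\lambda^2)}$ as soon as $m-1\ge 10^{-3}$, by the very same numerical inequality used at the end of the proof of Lemma \ref{mediumm}. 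Adding the two residual contributions gives a total well below $10^{-4}(m-1)N^{1-1/(132.94357\lambda^2)}$, whence
\[
\Tilde{S}(N,t)\le (m-1)\cdot 7.5001\,N^{1-1/(132.94357\lambda^2)}.
\]

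The only real obstacle is the numerical bookkeeping needed to verify that both residuals genuinely fit inside the margin $0.0001$ of the leading constant; this is routine given the enormous gap between the exponents $1-1/(132.94357\lambda^2)$ and $0.36$ when $N\ge e^{288\lambda^2}$ and $\lambda\ge 220$, together with the lower bound $k\ge 338$. No new Vinogradov-type input is required beyond what has already been established in Lemma \ref{large}, and the constant $7.5001$ (rather than a value closer to $7.5$) is chosen purely to leave a comfortable margin for these lower-order corrections.
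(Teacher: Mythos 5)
Your proposal follows exactly the paper's route: invoke Lemma \ref{large} for the main term, substitute into \eqref{eqfin}, and absorb the two residual terms into the margin $7.5\to 7.5001$; the treatment of $2N^{0.36}$ via the hugeness of $N^{1-1/(132.94357\lambda^2)-0.36}$ is the same computation the paper performs. There is, however, one numerical claim in your handling of the third term that is false as written: you bound it by $\frac{m-1}{338}N^{1-1/(132.94357\lambda^2)}$ and call this ``comfortably below'' $10^{-4}(m-1)N^{1-1/(132.94357\lambda^2)}$, but $1/338\approx 2.96\times 10^{-3}>10^{-4}$, so the crude step $N^{1-0.0000019476}\le N^{1-1/(132.94357\lambda^2)}$ does not by itself leave enough room. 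The repair is available from data you already cite: since $1/(132.94357\cdot 220^2)\approx 1.55\times 10^{-7}$, the exponent deficit $0.0000019476-1/(132.94357\lambda^2)\ge 1.79\times 10^{-6}$, and with $N\ge e^{288\cdot 220^2}$ this gives $N^{1-0.0000019476}\le e^{-24.9}N^{1-1/(132.94357\lambda^2)}$, so the third term is in fact smaller than $10^{-12}(m-1)N^{1-1/(132.94357\lambda^2)}$ and fits easily inside the $0.0001$ margin. (The paper's own write-up is equally terse on this point, silently replacing $\frac{1}{k}N^{1-0.0000019476}$ by $\frac{1}{k+1}N^{1-1/(132.94357\lambda^2)}$, which likewise requires this exponent-gap observation to justify the final constant.) With that one correction your argument is complete and coincides with the paper's.
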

\begin{proof}
From Lemma \ref{large} we know that 
\[
\left(C_3^{\frac{1}{r}}\left(\lambda^{4.65 \lambda} C_1 C_2\right)^{\frac{1}{2 r s}}\right) N^{1+E}\le 7.5N^{1-1/132.94357\lambda^2}.
\]
Using this result in \eqref{eqfin} we get
\begin{equation*}
\begin{aligned}
\Tilde{S}(N, t) &\leq(m-1)7.5N^{1-1/132.94357\lambda^2}+2 N^{0.36}+\frac{(m-1)}{k} N^{1-0.0000019476}\\& \le(m-1)\left(7.5N^{1-1/(132.94357\lambda^2)}+\frac{2N^{0.36}}{m-1}+\frac{1}{k+1}N^{1-1/132.94357\lambda^2}\right)\\&\leq (m-1)7.5001N^{1-1/(132.94357\lambda^2)}
\end{aligned}
\end{equation*}
where the last passage comes from the fact that, for $m=1.001$, we have \[
(m-1)\cdot 10^{-5}\cdot7.5N^{1-1/(132.94357\lambda^2)}\geq 2N^{0.36}.
\]Indeed, for $\lambda\ge 220$ and $N\ge e^{288\lambda^2}$, we have
\begin{equation*}
    \begin{aligned}
        N^{1-1/(132.94357\lambda^2)-0.36}&\ge e^{288\lambda^2(1-1/(132.94357\lambda^2)-0.36)}\\&\ge e^{288\cdot 220^2(1-1/(132.94357\cdot 220^2)-0.36)}\\& \ge \frac{2}{7.5\cdot 10^{-5}(m-1)}
    \end{aligned}
\end{equation*}
if
\[
m-1\ge \frac{2\cdot 10^5}{7.5}\cdot e^{-288\cdot 220^2(1-1/(132.94357\cdot 220^2)-0.36)}\ge 10^{-3800000},
\]
which holds, since with our choice $m-1=10^{-3}$.
\end{proof}
Theorem \ref{theorem3} follows immediately from Lemma \ref{mediumm} and Lemma \ref{largem}.\\At this point, if one could find an estimate of the form $\Tilde{S}(N,t)\le (m-1)cN^{1-1/(u\lambda^2)}$ also for $\lambda\le 84$, then we can use a more general version of Lemma \ref{lemma73}.
\begin{lemma}\label{lemma7.2new}
If $s=\sigma+i t, \frac{15}{16} \leq \sigma \leq 1, t \geq 10^{90}$ and $0<u \leq 1$, then
$$
\left|\zeta(s, u)-\sum_{0 \leq n \leq t}(n+u)^{-s}\right| \leq 10^{-77}.
$$
\end{lemma}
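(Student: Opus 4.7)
The plan is to follow the proof of Lemma \ref{lemma7.2} in \cite{ford_vinogradovs_2002} line-by-line, checking that the constants still work out under the relaxed hypothesis $t \geq 10^{90}$ and the relaxed conclusion $10^{-77}$. For $t \geq 10^{100}$ the bound $10^{-77}$ is implied immediately by Lemma \ref{lemma7.2} (which gives $10^{-80}$), so the real content of the new lemma lies in the range $10^{90} \leq t < 10^{100}$.

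Ford's argument applies the Euler--Maclaurin summation formula with $N = \lfloor t \rfloor$ and some integer $K \geq 1$ of Bernoulli corrections to obtain
\begin{equation*}
\zeta(s,u) - \sum_{n=0}^{N}(n+u)^{-s} = \frac{(N+u)^{1-s}}{s-1} - \tfrac{1}{2}(N+u)^{-s} + \sum_{k=1}^{K-1}\frac{B_{2k}}{(2k)!}\prod_{j=0}^{2k-2}(s+j)\,(N+u)^{1-s-2k} + R_K,
\end{equation*}
where $R_K$ is an explicit tail integral. Using $|B_{2k}|/(2k)! \leq 2/(2\pi)^{2k}$ together with $|s+j| \leq t+j$, every term on the right is $O(t^{-\sigma})$; since $(|s|+2k)/(N+u) \leq (t+1+2k)/t$ stays close to $1$ for moderate $k$, the sum in $k$ is dominated by a geometric series of ratio $\sim 1/(2\pi)$, which converges rapidly. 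Taking $K$ moderate (for instance $K=10$) makes the remainder $R_K$ negligible.

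For $t \geq 10^{90}$ and $\sigma \geq 15/16$ one has $t^{-\sigma} \leq 10^{-84.375}$, so the target $10^{-77}$ leaves a multiplicative slack of $10^{7.375}$ for the combined constant coming from the boundary terms, the finite Bernoulli sum, and the remainder integral. Since this combined constant is easily shown to be of order $1$, the bound follows. The principal obstacle is the numeric bookkeeping rather than any conceptual difficulty: Ford's original proof carried a comfortable slack of $10^{13.75}$, and the reduced margin of $10^{7.375}$ remains sufficient provided one uses the sharp estimate $|B_{2k}|/(2k)! \leq 2/(2\pi)^{2k}$ (rather than cruder bounds) and keeps careful track of the factor $|s(s+1)\cdots(s+2k-2)|/(N+u)^{2k-1}$ in each Bernoulli term. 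No new analytical ideas are required beyond this verification.
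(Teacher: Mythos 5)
Your overall strategy is the paper's own: the paper proves this lemma with the single sentence that one reruns Ford's proof of Lemma \ref{lemma7.2} with $10^{90}$ in place of $10^{100}$, and your reduction to the range $10^{90}\le t<10^{100}$ together with the slack computation ($t^{-\sigma}\le 10^{-84.375}$ against the target $10^{-77}$, a margin of $10^{7.375}$ replacing Ford's $10^{13.75}$) is exactly the verification that makes this legitimate. So the plan is right, and the conclusion is right.

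However, one step of your reconstruction would fail as written: the claim that taking $K$ moderate, ``for instance $K=10$'', makes the remainder $R_K$ negligible. Using $\sup_x|B_{2K+1}(\{x\})|\le 2\zeta(2K+1)(2K+1)!(2\pi)^{-(2K+1)}$ and $\prod_{j=0}^{2K}|s+j|\le(t+2K+1)^{2K+1}$, the trivial estimate of the tail integral gives
\[
|R_K|\ \lesssim\ \frac{(t+2K+1)^{2K+1}}{(2\pi)^{2K+1}(\sigma+2K)\,(N+u)^{\sigma+2K}}\ \approx\ \frac{t^{1-\sigma}}{(2\pi)^{2K+1}\,(2K)},
\]
and since $t^{1-\sigma}$ can be as large as $10^{6.25}$ in the relevant range, $K=10$ only yields $|R_{10}|\approx 10^{-11}$, some $65$ orders of magnitude short of $10^{-77}$. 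In the language of your bookkeeping, the remainder then contributes about $10^{73}\,t^{-\sigma}$ rather than $O(1)\cdot t^{-\sigma}$, which obliterates the $10^{7.375}$ slack. To close the gap you must either let $K$ grow with $t$, taking $K\gtrsim\log t/(2\log(2\pi))$ (in the fifties here), or---and this is the real analytic content of the lemma---bound the remainder integral nontrivially: expand the periodic Bernoulli function in its Fourier series and apply the first-derivative test to $\int_N^\infty e(mx)(x+u)^{-s-2K-1}\,dx$, noting that for $x\ge N=\lfloor t\rfloor$ the phase derivative $2\pi m-t/(x+u)$ is bounded away from zero for every $m\ne0$ because $t/(x+u)<2\pi\le 2\pi|m|$. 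Either repair recovers an error that is genuinely $O(t^{-\sigma})$ with an absolute constant of order one, after which your margin of $10^{7.375}$ is ample and the lemma follows.
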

\begin{proof}
    The proof is the same as that one of Lemma \ref{lemma7.2} in \cite{ford_vinogradovs_2002}, with $t\ge 10^{90}$ instead of $t\ge 10^{100}$.
\end{proof}
\begin{lemma}\label{finallemma}
    Suppose that $\Tilde{S}(N, t) \leq (m-1)C N^{1-1 /\left(D \lambda^2\right)}(1 \leq N \leq t)$ for positive constants $C$ and $D$ and $1<m\le 2$, where $\lambda=\frac{\log t}{\log N}$. Let $B=\frac{2}{9} \sqrt{3 D}$. Then, for $\frac{15}{16} \leq \sigma \leq 1, t \geq 10^{90}$ and $0<u \leq 1$, we have
$$
|\zeta(s)| \leq\left(\frac{(m-1)C+1+\frac{1}{10^{77}}}{\log ^{2 / 3}t}+\frac{1.0875034 (m-1)C D^{1/3}}{\log (m)}\right) t^{B(1-\sigma)^{3 / 2}} \log ^{2 / 3} t.
$$
\end{lemma}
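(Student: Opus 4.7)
The plan is to mirror the proof of Lemma \ref{lemma73} step by step, with only two changes: the dyadic intervals $(N,2N]$ are replaced by $m$-adic intervals $(N,mN]$, matching the definition of $\tilde S(N,t)$, and the hypothesis on partial sums now carries the extra linear factor $m-1$. Both changes propagate transparently through the argument, so no genuinely new technical ideas are required beyond a careful repetition of Ford's argument with $m$ in place of $2$.

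First I would apply Lemma \ref{lemma7.2new} to reduce the problem to bounding $\sum_{1\le n\le t}(n+u)^{-s}$ up to an error of $10^{-77}$. Splitting this sum at a cutoff $N_0$, the initial segment $\sum_{n\le N_0}(n+u)^{-s}$ is handled trivially by $\sum_n n^{-\sigma}=O(N_0^{1-\sigma})$; together with the $10^{-77}$ error, this produces the numerator $(m-1)C+1+10^{-77}$ once the final bound is divided through by $\log^{2/3}t$. For each $m$-adic block $(N,mN]$ with $N=m^kN_0$, Abel summation on the factorization $(n+u)^{-s}=(n+u)^{-it}(n+u)^{-\sigma}$ combined with the hypothesis yields
\[
\Big|\sum_{N<n\le mN}(n+u)^{-s}\Big|\le (m-1)CN^{1-\sigma}\exp\!\Big(-\frac{(\log N)^3}{D(\log t)^2}\Big).
\]

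Summing over $k$ and writing $y=\log N$, the Riemann sum in $y$ has step size $\log m$ and is approximated by $\tfrac{1}{\log m}\int e^{y(1-\sigma)-y^3/(D\log^2 t)}\,dy$, which is precisely where the prefactor $1/\log m$ enters in place of Ford's $1/\log 2$. Laplace's method at the maximizer $y_*=(\log t)\sqrt{D(1-\sigma)/3}$ gives the exponent $t^{B(1-\sigma)^{3/2}}$ with $B=\tfrac{2}{9}\sqrt{3D}$; combining the breadth of the peak with the trivial estimate $t^{Bx^{3/2}}\le e$ for very small values of $(1-\sigma)$ controls the potential $(1-\sigma)^{-1/4}$ factor and produces the shape $D^{1/3}(\log t)^{2/3}$. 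The numerical constant $1.0875034$ matches Ford's $1.569$ rescaled by $\log 2\approx 0.6931$ (indeed $1.569\cdot\log 2=1.08754\ldots$), reflecting exactly the change of denominator from $\log 2$ to $\log m$.

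The main obstacle is purely bookkeeping: tracking the explicit constant $1.0875034$ through Laplace's method and verifying the absorption of the initial-segment bound, the $10^{-77}$ error, and all lower-order terms into the tidy form $(m-1)C+1+10^{-77}$. The factor $m-1$ appears linearly from the hypothesis and propagates to the answer without interacting with the optimization in $y$, so the only substantive work relative to Ford's proof is a careful propagation of the numerical constants.
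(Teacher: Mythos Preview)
Your proposal is essentially correct and follows the same approach as the paper: apply Lemma~\ref{lemma7.2new}, decompose $\sum_{1\le n\le t}(n+u)^{-s}$ into $m$-adic blocks, apply partial summation and the hypothesis on $\tilde S$, and bound the resulting sum $\sum_j e^{g(j)}$ by its peak term plus an integral. Two small points of difference: the paper does not introduce a separate cutoff $N_0$ but simply starts the $m$-adic decomposition at $j=0$, so the ``$+1$'' in the numerator comes from the single term $n=1$ (and the ``$(m-1)C$'' comes from the peak value $e^{g(x_0)}=t^{B(1-\sigma)^{3/2}}$, not from an initial segment); and the constant $1.0875034$ is obtained directly as the numerical maximum of $e^{-2y^3}\int_0^\infty e^{3y^2u-u^3}\,du$ over $y\ge 0$ (near $y\approx 0.710$), rather than via Laplace asymptotics---your observation that this equals $1.569\cdot\log 2$ is correct and reflects that the same integral underlies Ford's original argument.
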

\begin{proof}
 Let
$$
S_1(u)=\sum_{1 \leq n \leq t}(n+u)^{-s}
$$
By Lemma \ref{lemma7.2new}, $\left|\zeta(s, u)-u^{-s}\right| \leq 10^{-77}+S_1(u)$. Put $r=\left\lceil\frac{\log t}{\log m}\right\rceil$. By partial summation,
\begin{equation*}
\begin{aligned}
\left|S_1(u)\right| & \leq 1+\sum_{j=0}^{r-1}\left|\sum_{m^j<n \leq \min \left(t, m^{j+1}\right)}(n+u)^{-\sigma-i t} \right| \\
& \leq 1+\sum_{j=0}^{r-1}\left(m^j\right)^{-\sigma} S\left(m^j, t\right) \\
& \leq 1+(m-1)C \sum_{j=0}^{r-1} e^{g(j)},
\end{aligned}
\end{equation*}
where \begin{equation*}
g(j)=(1-\sigma)(j \log m)-\frac{(j \log m)^3}{D \log ^2 t}.
\end{equation*}
As a function of $x,\  g(x)$ is increasing on $\left[0, x_0\right]$ and decreasing on $\left[x_0, \infty\right)$, where $x_0 \log m=\sqrt{D(1-\sigma) / 3} \log t$. Thus
\begin{equation*}
\begin{aligned}
\frac{\left|S_1(u)\right|-1}{(m-1)C} & \leq e^{g\left(x_0\right)}+\int_0^r e^{g(x)} d x \\
& \leq t^{B(1-\sigma)^{3 / 2}}+\frac{D^{1 / 3} \log ^{2 / 3} t}{\log m} \int_{0}^{\infty} e^{3 y^2 u-u^3} d u,
\end{aligned}
\end{equation*}
where $y=\sqrt{(1-\sigma) / 3} D^{1 / 6} \log ^{1 / 3} t$.\\ To bound the last integral, we make use of the inequality
$$
e^{-2 y^3} \int_0^{\infty} e^{3 y^2 u-u^3} d u \leq 1.0875034 \quad(y \geq 0),
$$
where the maximum occurs near  $y=0.710$. Therefore
\begin{equation*}
\frac{\left|S_1(u)\right|-1}{(m-1)C} \leq t^{B(1-\sigma)^{3 / 2}}\left(1+\frac{1.0875034}{\log m} D^{1 / 3} \log ^{2 / 3} t\right)
\end{equation*}
\end{proof}
However, Ford's method in \cite{ford_vinogradovs_2002} for $\lambda\le 87$ (or $\lambda\le 84$ as in our paper), cannot be modified to extract the factor $(m-1)$. Indeed, following the  proof of Lemma 6.3 in \cite{ford_vinogradovs_2002}, one should estimates the following quantity $$
\left(\sum_{N<n \leq m N-1}|T(n)|^{2 s}\right)^{1 / 2 s},
$$
where
$$
T(n)=\sum_{v \leq M} e\left(-\frac{t}{2 \pi} \log \left(1+\frac{v}{n+u}\right)\right).
$$ However, since an estimate on the whole sum of $T(n)$ over $N\le n\le mN$ is required, instead of one for the maximum of $T(n)$ over the interval $N\le n\le mN$, it is not possible to extract a factor $(m-1)$ at this step.  A clever argument which would overcome this problem could lead to a suitable estimate also for the case $\lambda\le 84$, with a consequent improved value for $A$.
\section{Code listings}\label{sect9}
\par\noindent\rule[5mm]{\textwidth}{0.5pt}
\medskip
\textbf{Program 1 for Theorem \ref{newboundsvino}}
\begin{lstlisting}
#include <stdio.h>
#include <math.h>
#include <stdlib.h>
#define max(x,y) (((x)>(y))?(x):(y))
#define min(x,y) (((x)>(y))?(y):(x))

double newdel(double k, double r, double del)
{
    double y,p,pf,tkr;
    long j,jf,jj;
    if ((r<4.0) && (r>k)) return (2.0*del);
    tkr=2.0*k*r; y=2.0*del-(k-r)*(k-r+1.0);
    if ((y<0.0) && (2.0*k/(tkr+y))<= 1.0/(k+1.0)) 
       return (del*2.0);

    j=floor(min(0.5*(3.0+sqrt(4.0*y+1.0)), 9.0*r/10.0));
    p=1.0/r;
    for (jj=j-1; jj>=1; jj--) {
        p=0.5/r+0.5*(1.0-y/(tkr-2.0*r*jj))*p;
    }
    return(del-k+0.5*p*(tkr-y));
}

int main()
{
    long j,k,k0,k1,r,r0,r1,n,bestr,s;
    double kk,logk,del0,del1,bestdel,goal,maxs,eta,om;
    double logH,logW,logC,k3,theta,thetamax;
    printf("enter k range :"); scanf("%ld %ld", &k0,&k1);
    maxs=0.0; thetamax=0.0;
    for (k=k0; k<=k1; k++) {
        kk=(double)k;
        logk=log(kk); k3=kk*kk*kk*logk;
        om=0.5; 
        for (j=1;j<=10;j++) om=1.5/(log(18.0*k3/om)-1.5);
        eta=1.0+om;
        logW=(kk+1.0)*max(1.5+1.5/om,log(18.0/om*k3));
        del0=0.5*kk*kk*(1.0-1.0/kk);
        goal=0.001*kk*kk;
        logH=3.0*kk*logk+(kk*kk-4.0*kk)*log(eta);
        logC=kk*logk;
        for (n=1; ;n++) {
            r0=(long)(sqrt(kk*kk+kk-2.0*del0)+0.5)-2; 
            r1=r0+4;
            bestdel=kk*kk; bestr=-1;
            for (r=r0;r<=r1;r++) {
                del1=newdel(kk,(double)r,del0);
                if (del1<bestdel) { bestdel=del1; bestr=r;}
            }
            del1=bestdel; r=bestr;
            if ((del1>=del0) && (r<r0)) exit(-1);
            logC +=max(logH+4.0*kk*n*log(eta),logW*(del0-del1));
            if (del1<=goal) {
                s=(long)((n+(del0-goal)/(del0-del1))*kk+1);
                theta=logC/k3;
                printf("%4d: s=%8.6f k^2 eta=%9.7f theta=%10.8f\n",
                       k,s/kk/kk,eta,theta);
                if ((s/kk/kk)>maxs) maxs=s/kk/kk;
                if (theta>thetamax) thetamax=theta;
                break;
            }
            del0=del1;
        }
    }
    printf("\n max s=%9.6fk^2 maxtheta=%10.8f\n",maxs,thetamax);
    system("pause");
}

\end{lstlisting}
\medskip
\par\noindent\rule[5mm]{\textwidth}{0.5pt}
\medskip
\textbf{Program 2 for Theorem \ref{newboundsvino}}
\begin{lstlisting}
#include <stdio.h>
#include <math.h>
#include <stdlib.h>
#define max(x,y) (((x)>(y))?(x):(y))
#define min(x,y) (((x)>(y))?(y):(x))

double newdel(double k, double r, double del)
{
    double y,p,pf,tkr;
    long j,jf,jj;
    if ((r<4.0) && (r>k)) return (2.0*del);
    tkr=2.0*k*r; y=2.0*del-(k-r)*(k-r+1.0);
    if ((y<0.0) && (2.0*k/(tkr+y))<= 1.0/(k+1.0)) 
       return (del*2.0);

    j=floor(min(0.5*(3.0+sqrt(4.0*y+1.0)), 9.0*r/10.0));
    p=1.0/r;
    for (jj=j-1; jj>=1; jj--) {
        p=0.5/r+0.5*(1.0-y/(tkr-2.0*r*jj))*p;
    }
    return(del-k+0.5*p*(tkr-y));
}

int main()
{
    long j,k,k0,k1,r,r0,r1,n,bestr,s;
    double kk,logk,del0,del1,bestdel,goal,maxs,eta,om;
    double logH,logW,logC,k3,theta,thetamax;
    printf("enter k range :"); scanf("%ld %ld", &k0,&k1);
    maxs=0.0; thetamax=0.0;
    for (k=k0; k<=k1; k++) {
        kk=(double)k;
        logk=log(kk); k3=kk*kk*kk*logk;
        om=0.5; 
        for (j=1;j<=10;j++) om=1.5/(log(18.0*k3/om)-1.5);
        eta=1.0+om;
        logW=(kk+1.0)*max(1.5+1.5/om,log(18.0/om*k3));
        del0=0.4*kk*kk;
        goal=0.001*kk*kk;
        logH=3.0*kk*logk+(kk*kk-4.0*kk)*log(eta);
        logC=kk*logk;
        for (n=ceil(0.1247*kk); ;n++) {
            r0=(long)(sqrt(kk*kk+kk-2.0*del0)+0.5)-2; r1=r0+4;
            bestdel=kk*kk; bestr=-1;
            for (r=r0;r<=r1;r++) {
                del1=newdel(kk,(double)r,del0);
                if (del1<bestdel) {bestdel=del1; bestr=r; }
            }
            del1=bestdel; r=bestr;
            if ((del1>=del0) && (r<r0)) exit(-1);
            logC +=max(logH+4.0*kk*n*log(eta),logW*(del0-del1));
            if (del1<=goal) {
                s=(long)((n+(del0-goal)/(del0-del1))*kk+1);
                theta=logC/k3;
                printf("%4d: s=%8.6f k^2 eta=%9.7f theta=%10.8f\n",
                       k,s/kk/kk,eta,theta);
                if ((s/kk/kk)>maxs) maxs=s/kk/kk;
                if (theta>thetamax) thetamax=theta;
                break;
            }
            del0=del1;
        }
    }
    printf("\n max s=%9.6fk^2 maxtheta=%10.8f\n",maxs,thetamax);
    system("pause");
}

\end{lstlisting}
\medskip
\par\noindent\rule[5mm]{\textwidth}{0.5pt}
\medskip
\textbf{Program 3 for Lemma \ref{medium}}
\begin{lstlisting}
#include<stdio.h>
#include <math.h>
long k,g,h,s,r,t,g0,h0,g1,h1,flag;
double mu1,mu2,xi,lam,lam1,lam2,D,sigma,Y,goal;
void calc(ex,c,pr)
    double *ex,*c; int pr;
{
    double kk,logk,k2,log(),exp(),floor(),ceil();
    double th,rr,ss,tt,gg,hh,rho,H,E1,E2,E3,m1,m2,Z0,Z1,
           reta,logC1,logC2,logC3,logC,dc;
    k=(long) (lam/(1.0-mu1-mu2)+0.000003);
    kk=(double) k;
    logk=log(kk); k2=kk*kk;
    rho=3.20863; th=2.17720;
    if (k<=89999) {rho=3.205502; th=1.77775;}
    if (k<=499) {rho=3.196497; th=2.24352;}
    if (k<=339) {rho=3.192950; th=2.33313;}
    if (k<=190) {rho=3.184127; th=2.35334;}
    if (k<=170) {rho=3.181869; th=2.37929;}
    if (k<=148) {rho=3.178871; th=2.38259;}
    if (k<=146) {rho=3.178551; th=2.39167;}
    if (k<=139) {rho=3.177527; th=2.39529;}
    if (k<=137) {rho=3.177207; th=2.40930;}
    r=(long) (rho*k2+1.0);
    rr=(double) r; ss=(double) s;
    gg=(double) g; hh=(double) h; tt=(double) t;
    m1=floor(lam/(1.0-mu1));
    m2=floor(lam/(1.0-mu2));
    Z0=0.5*((m1*m1+m1)*(1.0-mu1)+(m2*m2+m2)*(1.0-mu2)-
        -hh*hh+hh-(1.0-mu1-mu2)*(gg*gg+gg));
    Z1=hh+gg-m1-m2-1.0;
    if (Z1<0.0) H=Z0+lam2*Z1;
    else H=Z0+lam1*Z1;
    reta=xi*pow(gg,1.5);
    E1=0.001*k2;
    E2=0.5*tt*(tt-1.0)+hh*tt*exp(-ss/(hh*tt))+ss*ss/(2.0*tt*reta);
    E3=log(Y*lam1*lam1)/(1.0*Y*lam1*lam1*lam1*lam1);
    *ex=(-E3+(1.0/(2.0*rr*ss))*(H-mu1*E1-mu2*E2))*lam1*lam1;
    logC1=th*k2*kk*logk;
    logC2=ss*ss/tt+10.5*xi*xi*tt*gg*gg*log(gg)*log(gg)/D;
    logC2 -=(ss*log(0.1*reta)*((reta+hh)*pow(1.0-1.0/hh,ss/tt)-h));
    logC3=1.0417*reta*log(10.4167*reta);
    logC=logC3/rr+(4.65*lam2*log(lam2)+logC1+logC2)/(2.0*rr*ss);
    *c=exp(logC)+1.0/kk;
    if (pr==1){
        printf("%8.4f-%8.4f %4ld",lam1,lam2,k);
        if(g>0) printf("%3ld %2ld %2ld %2ld %9.4f %7.4f\n",
                       s,g-g0,h1-h,t,1.0/(*ex)+0.00005,*c+0.00005);
        else printf("\n");
    }
}
int main()
{
    double E,lam8,lam9,r[9],tmp,maxex,con,maxcon
    double bestth,bestcon,bp[5000];
    long i,j,i0,w,n,m,maxm,bestg,besth,bests,s0,s1;
    mu1=0.1905; mu2=0.1603;
    goal=132.94357;
    while (1) {
        printf("enter Y:"); scanf("%lf", &Y);
        D=0.1019*Y;
        printf("enter xi: "); scanf("%lf", &xi);
        printf("enter sigma :"); scanf("%lf", &sigma);
        if (sigma<0.0) flag=1; else flag=0;
        printf("enter lambda range: "); 
        scanf("%lf %lf", &lam8, &lam9);
        if ((lam9<lam8)||(lam8<=80.0)||(lam9>=300.0)) continue;
        printf(" approx. \n");
        printf("lambda range    k   s   a   b   t   exp const\n");
        printf("-------------- --- --- --- --- --- -----------\n");
        bp[1]=lam8; bp[2]=lam9; j=3;
        i0=(long) (lam9/(1.0-mu1-mu2))+10;
        for (i=1; i<=i0;i++){
            w=(double) i;
            r[1]=w*(1.0-mu1);
            r[2]=w*(1.0-mu2);
            r[3]=(w-0.000003)*(1.0-mu1-mu2);
            for (m=1;m<=3;m++) if ((r[m]<lam9) && (r[m]>lam8)) 
            bp[j++]=r[m];
        }
        n=j-1;
        for (i=1;i<=n-1;i++) for(j=i+1;j<=n;j++)
        if (bp[j]<bp[i]) {tmp=bp[i]; bp[i]=bp[j]; bp[j]=tmp;}
        maxex=0.0;
        maxcon=0.0;
        for (j=1;j<=n-1;j++){
            lam=0.5*(bp[j]+bp[j+1]);
            lam1=bp[j]; lam2=bp[j+1];
            g0=(long) (lam/(1.0-mu1)+1.0); g1=g0+1;
            h1=(long) (lam/(1.0-mu2)); h0=h1-1;
            bestg=-1; besth=-1; bestth=1.0e20; bestcon=1.0e40;
            for (g=g0;g<=g1;g++) for (h=h0;h<=h1;h++){
                t=g-h+1;
                if ((g>=100) && ((double) g<=1.254*lam1)){
                    if (flag==0) {
                        s0=(long) (sigma*h*t+1.0); s1=s0;
                    }
                    else {
                        s0=h*(t-1)/4;
                        s1=h*t/2;
                    }
                    for(s=s0; s<=s1; s++) {
                        calc(&E,&con,0);
                        if((E>0.0)&&(1.0/E<goal)&&(con<bestcon)){
                           bestth=1.0/E; bestg=g; besth=h; bests=s;  
                        }
                    }
                }
            }
        
        g=bestg; h=besth; t=g-h+1;
        s=bests;
        calc(&E,&con,1);
        if (1.0/E>maxex) maxex=1.0/E;
        if (con>maxcon) maxcon=con;
    }
    printf("max. ex: %10.6f max. const.: %10.6f\n", maxex,maxcon);
  }
}
\end{lstlisting}
\section*{Acknowledgements}
I would like to thank my supervisor Timothy S. Trudgian for his support and helpful suggestions
throughout the writing of this article.

\clearpage
\printbibliography

\end{document}